\newtheorem{theorem}{Theorem}[section]
\newtheorem{lemma}[theorem]{Lemma}
\newtheorem{proposition}[theorem]{Proposition}
\newtheorem{corollary}[theorem]{Corollary}
\theoremstyle{definition}
\newtheorem{problem}[theorem]{Problem}
\newtheorem{conjecture}[theorem]{Conjecture}
\newtheorem{definition}[theorem]{Definition}
\theoremstyle{remark}
\newtheorem{example}[theorem]{Example}
\newtheorem{remark}[theorem]{Remark}
\newcommand{\Top}{\text{top}}
\newcommand{\bi}{\leftrightarrow}
\DeclareMathOperator{\pa}{pa}
\DeclareMathOperator{\sib}{sib}
\DeclareMathOperator{\an}{an}
\numberwithin{equation}{section}
\numberwithin{figure}{section}
\theoremstyle{definition}
\newtheorem*{rep@theorem}{\rep@title}
\newcommand{\newreptheorem}[2]{%
\newenvironment{rep#1}[1]{%
 \def\rep@title{#2 \ref{##1}}%
 \begin{rep@theorem}}%
 {\end{rep@theorem}}}
\title[Nested Covariance Determinants]{Nested Covariance Determinants
  and Restricted Trek Separation in Gaussian Graphical Models}
\author{Mathias Drton}
\address[M.~Drton]{Department of Statistics\\
University of Washington}
\email[M.~Drton]{md5@uw.edu}
\author{Elina Robeva}
\address[E.~Robeva]{Department of Mathematics\\
MIT}
\email[E.~Robeva]{erobeva@mit.edu}
\author{Luca Weihs}
\address[L.~Weihs]{Department of Statistics\\
  University of Washington}
\email[L.~Weihs]{lucaw@uw.edu}
\keywords{Conditional independence, covariance matrix, graphical
  model, trek separation, Verma constraint}
\begin{document}

\begin{abstract}
  Directed graphical models specify noisy functional relationships
  among a collection of random variables.  In the Gaussian case, each
  such model corresponds to a semi-algebraic set of positive definite
  covariance matrices.  The set is given via a
  parametrization, and much work has gone into obtaining an implicit
  description in terms of polynomial (in-)equalities.  Implicit descriptions
  shed light on problems such as parameter identification, model
  equivalence, and constraint-based statistical inference.  For models
  given by directed acyclic graphs, which represent settings where all
  relevant variables are observed, there is a complete theory: All
  conditional independence relations can be found via graphical
  $d$-separation and are sufficient for an implicit description.  The
  situation is far more complicated, however, when some of the
  variables are hidden (or in other words, unobserved or latent).
  We consider models associated to mixed graphs that capture the
  effects of hidden variables through correlated error terms.  The
  notion of trek separation explains when the covariance matrix in
  such a model has submatrices of low rank and generalizes
  $d$-separation. However, in many cases, such as the infamous Verma
  graph, the polynomials defining the graphical model are not
  determinantal, and hence cannot be explained by $d$-separation or
  trek-separation.  In this paper, we show that these constraints often
  correspond to the vanishing of nested determinants and can be
  graphically explained by a notion of {\em restricted trek
    separation}.
  % Gaussian graphical models are semi-algebraic sets formed by
  % intersecting the cone of positive definite matrices with the
  % vanishing set of several polynomial equations. Knowing this set
  % allows for performing constraint-based inference in these models.
  % For a directed acyclic graph, these polynomial equations are
  % determinants of submatrices of the covariance matrix corresponding
  % to conditional independence statements of the variables, and to
  % d-separation in the graph.  More generally, for a bidirected graph,
  % submatrices with low rank correspond to generalizations of
  % conditional independence constraints on collections of random
  % variables, and graph-theoretically these are represented by
  % trek-separation.  However, in many cases, such as the infamous Verma
  % graph, the polynomials defining the graphical model are not
  % determinantal, and hence cannot be explained by d-separation or
  % trek-separation.  In this paper, we show that these constraints are
  % in fact given by the vanishing of nested determinants, which are
  % graphically explained by a simple notion of {\em generalized trek
  %   separation}.
\end{abstract}
\maketitle

\section{Introduction}
\label{sec:introduction}

Let $G=(V,\mathcal{E})$ be a directed graph with finite vertex set $V$ and edge
set $\mathcal{E}\subseteq V\times V$.  The edge set is always assumed to be free
of self-loops, so $(i,i)\notin \mathcal{E}$ for all $i\in V$.  For each vertex
$i$, define a set of parents $\pa(i)=\{j\in V: (j,i)\in \mathcal{E}\}$.  The
graph $G$ induces a statistical model for the joint distribution of a
collection of random variables $X_i$, $i\in V$, indexed by the graph's
vertices.
% The model hypothesizes that each variable $X_v$ is a function of the
% parent variables $X_{\pa(v)}=(X_w:w\in\pa(v))$ and an independent
% noise term.
The model hypothesizes that each variable is a function of the parent
variables and an independent noise term.  In this paper we consider
the Gaussian case, in which the functional relationships are linear so
that
\begin{align}
  \label{eq:structural-general}
  X_i &= \lambda_{0i} + \sum_{j\in\pa(i)} \lambda_{ji} X_j + \epsilon_i, \quad i\in V,
        % f_i(X_{\pa(i)}, \epsilon_i), \quad i\in V,
\end{align}
where the $\epsilon_i$, $i\in V$, are independent and centered
Gaussian random variables.  The coefficients $\lambda_{0i}$ and
$\lambda_{ji}$ are unknown real parameters that are assumed to be such
that the system~(\ref{eq:structural-general}) admits a unique solution
$X=(X_i:i\in V)$.  Typically termed a system of structural equations,
(\ref{eq:structural-general}) specifies cause-effect relations whose
straightforward interpretability is behind the wide-spread use of the
models \cite{sgs,pearl:book}.

The random vector $X$ that solves~(\ref{eq:structural-general})
follows a Gaussian distribution whose mean vector may be arbitrary
through the choice of the parameters $\lambda_{0i}$ but whose
covariance matrix is highly structured.  The model obtained
from~(\ref{eq:structural-general}) thus naturally corresponds to the
set of covariance matrices, which we denote by $\mathcal{M}(G)$.  This
set is given parametrically with each covariance being a rational or
even polynomial function of the parameters $\lambda_{ji}$ and the
variances of the errors $\epsilon_i$, as we detail in
Section~\ref{sec:background}.

While a parametrization is useful to specify a distribution and to
optimize the likelihood function, many statistical
problems can only be solved with some understanding of an
implicit description.  In our setting, an implicit description of the
model amounts to a semi-algebraic description of the set of covariance
matrices that belong to the model through polynomial equations and inequalities, and a combinatorial
criterion on the graph which specifies how to obtain them.  Specific
problems that can be addressed through such an implicit description
include model equivalence, parameter identification, and
constraint-based statistical inference.  We refer the reader to the
recent work of \cite{VanOmmenMooij_UAI_17} and the reviews of \cite{D}
and \cite{drt17}.

If the underlying graph $G$ is an acyclic digraph, also termed a
directed acyclic graph (DAG), then probabilistic conditional
independence yields an implicit description of $\mathcal{M}(G)$
\cite{lauritzen:1996,studeny:book}.  For a Gaussian joint
distribution, conditional independence corresponds to the vanishing of
special subdeterminants of the covariance matrix, namely,
subdeterminants that are almost principal in the sense that the row
and the column index sets agree in all but one element
\cite[Chap.~3.1]{MR2362722,loas}.  The conditional
independences holding in all distributions in the given model can be
found graphically using the concept of $d$-separation.  It follows in
particular that two DAGs $G$ and $H$ give rise to the same model
$\mathcal M(G) = \mathcal M(H)$ if and only if $G$ and $H$ have the
same $d$-separation relations.  This combinatorial criterion can be
simplified to yield an efficient algorithm:
$\mathcal M(G) = \mathcal M(H)$ if and only if $G$ and $H$ have the
same skeleta and the same sets of unshielded colliders
\cite{Frydenberg90,VermaPearl90}.

While DAG models are well-understood, they only pertain to problems
where all relevant variables are observed.  A long-standing program in
the fields of graphical modeling and causal inference seeks to develop
combinatorial solutions to problems such as model equivalence in
settings with hidden/latent variables.  Mathematically, if only the
variables indexed by a set $A\subset V$ are observed while those
indexed by $V\setminus A$ are hidden, then the covariance matrices in the set
$\mathcal{M}(G)$ are to be projected on their principal $A\times A$
submatrix.  It is well known that conditional independence is no
longer sufficient for implicit model description after such a
projection.

\begin{figure}[t]
  \centering
  \scalebox{0.85}{
    \tikzset{
      every node/.style={circle, draw,inner sep=1mm, minimum size=0.55cm, draw, thick, black, fill=gray!20, text=black},
      every path/.style={thick}
    }
    \begin{tikzpicture}[align=center,node distance=2cm]
      \node [] (1) {1};
      \node [] (2) [right of=1] {2};
      \node [] (3) [right of=2]    {3};
      \node [] (5) [above right of=3]    {5};
      \node [] (4) [below right of=5]    {4};

      \draw[blue] [-latex] (1) edge (2);
      \draw[blue] [-latex] (2) edge (3);
      \draw[blue] [-latex] (3) edge (4);
      \draw[blue] [-latex, bend right] (1) edge (3);
      \draw[red] [-latex] (5) edge (4);
      \draw[red] [-latex] (5) edge (3);
    \end{tikzpicture}
  }
  \caption{\label{fig:iv-dag} A DAG on five vertices.  Vertex 5 indexes a
    hidden variable.}
\end{figure}
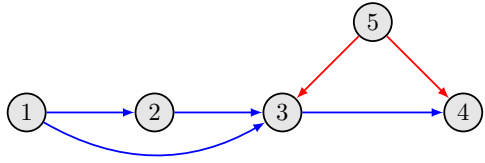

\begin{example}
  \label{ex:intro:iv}
  Let $G$ be the DAG in Figure~\ref{fig:iv-dag}, where vertex 5 indexes a
  hidden variable.  Then no conditional independence
  involving only the observed $X_1$, $X_2$, $X_3$, and $X_4$
  holds for all covariance matrices in $\mathcal{M}(G)$.  Instead, a
  positive definite $4\times 4$ matrix $\Sigma=(\sigma_{ij})$ is the
  projection of a matrix in $\mathcal{M}(G)$ if and only if
  \begin{align*}
    \left|\Sigma_{12, 34}\right|:= \det(\Sigma_{12, 34}) &\;=\;
                                 \sigma_{13}\sigma_{24}-\sigma_{14}\sigma_{23} \;=\;0
  \end{align*}
  and $\sigma_{j3}=0$ implies $\sigma_{j4}=0$ for $j=1,2$.  
\end{example}

In the example just given the key constraint is a determinant of the
covariance matrix that cannot be explained by $d$-separation. 
A major advance in this decade was the
introduction of trek separation, which is a graphical criterion that
can be used to decide the vanishing of any subdeterminant of the
covariance matrix \cite{DST,STD}.  Although more work is required to
fully exploit trek separation in model equivalence criteria, the
notion has already seen application in parameter identification
problems \cite{div}.

While greatly generalizing Gaussian conditional independence,
determinantal constraints are again not sufficient to describe the
sets $\mathcal{M}(G)$ after projection to the covariance matrix of
observed variables.  The following example is due to Thomas Verma.

\begin{figure}[t]
  \centering
  \scalebox{0.85}{
    \tikzset{
      every node/.style={circle, draw,inner sep=1mm, minimum size=0.55cm, draw, thick, black, fill=gray!20, text=black},
      every path/.style={thick}
    }
    \begin{tikzpicture}[align=center,node distance=2cm]
      \node [] (1) {1};
      \node [] (2) [right of=1] {2};
      \node [] (3) [right of=2]    {3};
      \node [] (5) [above right of=3]    {5};
      \node [] (4) [below right of=5]    {4};

      \draw[blue] [-latex] (1) edge (2);
      \draw[blue] [-latex] (2) edge (3);
      \draw[blue] [-latex] (3) edge (4);
      \draw[blue] [-latex, bend right] (1) edge (3);
      \draw[red] [-latex] (5) edge (4);
      \draw[red] [-latex] (5) edge (2);
    \end{tikzpicture}
  }
    \caption{\label{fig:verma-dag} The Verma graph. Vertex 5 indexes a
      hidden variable. }
\end{figure}
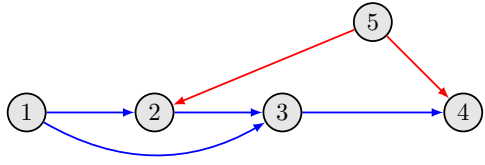

\begin{example}
  \label{ex:intro:verma}
  Let $G$ be the graph from Figure~\ref{fig:verma-dag}.
  Then as in the first example no conditional independence that holds
  for $\mathcal{M}(G)$ involves only the observed variables $X_1$,
  $X_2$, $X_3$, and $X_4$.  Instead, a positive definite $4\times 4$
  matrix $\Sigma=(\sigma_{ij})$ is the projection of a matrix in
  $\mathcal{M}(G)$ if and only if
  \begin{align}\label{eq:Verma_poly}
    f_{\text{Verma}} \;=\; \sigma_{11}\sigma_{13}\sigma_{22}\sigma_{34} - \sigma_{11}\sigma_{13}\sigma_{23}\sigma_{24} - \sigma_{11}\sigma_{14}\sigma_{22}\sigma_{33} + \sigma_{11}\sigma_{14}\sigma_{23}^2\\
    - \sigma_{12}^2\sigma_{13}\sigma_{34} + \sigma_{12}^2\sigma_{14}\sigma_{33} + \sigma_{12}\sigma_{13}^2\sigma_{24} - \sigma_{12}\sigma_{13}\sigma_{14}\sigma_{23} &\;=\; 0;\notag                                  
  \end{align}
  (compare Example 3.3.14 in \cite{loas}). The polynomial
  $f_{\text{Verma}}$ is not a subdeterminant of $\Sigma$ and,
  therefore, is neither explained by $d$-separation nor by
  trek-separation.
\end{example}

Another key advance in the area is a graph decomposition result of Jin
Tian \cite{tian:pearl:2002}; see also \cite[Sections 5-6]{D}.  This
result allows one to derive constraints by applying $d$-separation in
certain subgraphs.  In particular, the vanishing polynomial
$f_{\text{Verma}}$ from~\eqref{eq:Verma_poly} can be shown to arise
from the independence of variables $X_1$ and $X_4$ that holds for the
subgraph obtained by removing the edges $1\to 3$ and $2\to 3$ from the
Verma graph in Figure~\ref{fig:verma-dag}.  For further details, we
refer the reader to the review in \cite{EvansEtAl14}.

In the next example however, neither Tian's graph decomposition nor trek separation provide
any insight.

\begin{figure}[t]
  \centering
  \scalebox{0.85}{
    \tikzset{
      every node/.style={circle, draw,inner sep=1mm, minimum size=0.55cm, draw, thick, black, fill=gray!20, text=black},
      every path/.style={thick}
    }
    \begin{tikzpicture}[align=center,node distance=2cm]
      \node [] (c) {1};
      \node [] (ac) [above right of=c]    {5};
      \node [] (a) [below right of=ac] {2};
      \node [] (bc) [above right of=a]    {6};
      \node [] (b) [below right of=bc]    {3};
      \node [] (ad) [above right of=b]    {7};
      \node [] (d) [below right of=ad]    {4};

      \draw[blue] [-latex] (a) edge (b);
      \draw[blue] [-latex] (b) edge (d);
      \draw[red] [-latex] (ad) edge (a);
      \draw[red] [-latex] (ad) edge (d);

      \draw[red] [-latex] (ac) edge (a);
      \draw[red] [-latex] (ac) edge (c);
      
      \draw[red] [-latex] (bc) edge (b);
      \draw[red] [-latex] (bc) edge (c);
    \end{tikzpicture}
  }
    \caption{\label{fig:ommen-dag}  Graph based on 
      \cite[Fig.~1]{VanOmmenMooij_UAI_17}.  Vertices 5, 6 and 7 index
      hidden variables. }
\end{figure}

\begin{example}
  \label{ex:intro:ommen}
  Let $G$ be the graph from Figure~\ref{fig:ommen-dag}.  There are
  four observed variables, and projecting $\mathcal{M}(G)$ gives a set
  of codimension one.  As discussed in \cite{VanOmmenMooij_UAI_17},
  any covariance $\Sigma=(\sigma_{ij})\in\mathcal{M}(G)$ satisfies the
  constraint
  \begin{equation}
    \label{eq:ommen1}
    f_{\text{vOM}}\;=\;\sigma_{22}\sigma_{34}\sigma_{13}-
    \sigma_{22}\sigma_{33}\sigma_{14}-
    \sigma_{23}\sigma_{24}\sigma_{13}+\sigma_{23}^2\sigma_{14} 
    \;=\;0.
  \end{equation}
  The irreducible polynomial in~(\ref{eq:ommen1}) defines the
  hypersurface that contains the projection of $\mathcal{M}(G)$. 
\end{example}

A closer look at Examples~\ref{ex:intro:verma}
and~\ref{ex:intro:ommen} reveals some common structure.  Both
constraints are \emph{nested determinants}, by which we mean
determinants of a matrix whose entries are determinants themselves.
This observation is the point of departure for our paper.

\begin{example}
  \label{ex:intro:verma2:ommen2}
  The Verma polynomial from Example~\ref{ex:intro:verma} admits a
  compact representation through nested determinants, namely,
  \begin{align}
            \label{VermaMatrix3}
    f_{\text{Verma}}
    &\;=\; \begin{vmatrix}
      \left|\Sigma_{123,123}\right| & \left|\Sigma_{123,124}\right|\\
      \left|\Sigma_{1,3}\right| & \left|\Sigma_{1,4}\right|\\ 
    \end{vmatrix}.
  \end{align}
  Such a representation is generally not unique.  For instance,
  \begin{align}
    \label{VermaMatrix2}
    f_{\text{Verma}}&\;=\; \begin{vmatrix}
      \left|\Sigma_{123,134}\right| & \left|\Sigma_{123,234}\right|\\
      \left|\Sigma_{1,1}\right| & \left|\Sigma_{1,2}\right|\\ 
    \end{vmatrix}
%     \\
% \label{VermaMatrix1}
%     &
        \;=\;\begin{vmatrix}
          \left|\Sigma_{12, 12}\right| & \left|\Sigma_{12, 13}\right|\\
      \left|\Sigma_{34, 12}\right| & \left|\Sigma_{34, 13}\right|
    \end{vmatrix}.
  \end{align}
  The polynomial from Example~\ref{ex:intro:ommen} is also a nested
  determinant, namely,
  \begin{equation}
    \label{eq:ommen2}
      f_{\text{vOM}}\;=\;
    \begin{vmatrix}
      \left|\Sigma_{23,23} \right| & \left|\Sigma_{23,24} \right|\\
      \left|\Sigma_{1,3}\right| & \left|\Sigma_{1,4}\right|\\
    \end{vmatrix}.
  \end{equation}
  We are not aware of any literature emphasizing these types of representations.
\end{example}

In this paper, we investigate combinatorial conditions on the graph
$G$ that entail the vanishing of nested determinants.  We give a rigorous
definition of the models we study in Section~\ref{sec:background},
where we also provide background on the current knowledge of their
description.  In particular, we introduce mixed graph models that play
an important role in model selection \cite[Section 5.2]{drt17}.
Section~\ref{sec:ancestral} shows how nested determinants arise under
conditions of ancestrality.  In Theorem~\ref{thm:cut-ancestral} we show 
that such determinants completely describe the model $\mathcal M(G)$ 
for a wide class of mixed graphs that are (nearly) ancestral. Section~\ref{sec:restricted-treks}
describes our notion of {\em restricted trek separation} in the
setting of arbitrary acylic mixed graphs.  In
Section~\ref{sec:nested-determinants}, we show how the vanishing of
nested determinants can follow from restricted trek separation.  The
result we present also implies the vanishing of the constraints exhibited for (nearly)
ancestral graphs in Section~\ref{sec:ancestral}.  In
Section~\ref{sec:beyond-theorem}, we give examples that involve
recursive nesting of determinants and are beyond the scope of our
results.  Nevertheless, we can relate these examples back to restricted
trek separation.  While our focus is on acyclic mixed graphs, our last
example shows that a nested determinant may also arise for graphs
containing directed cycles.  We conclude with
Section~\ref{sec:discussion}, where we discuss future work and open
problems.
% and we
% show examples of more general hidden variable models in which the
% constraints are given by nested determinants as well.

%%% Local Variables:
%%% TeX-master: "nested_dets"
%%% End:

\section{Background}
\label{sec:background}

\subsection{Structural equation models}

Let $\epsilon=(\epsilon_i:i\in V)$ be the random error vector for the
equation system in~(\ref{eq:structural-general}).  As we are only
concerned with the covariance structure, we disregard the offsets
$\lambda_{0i}$.  The system can then be written as
\begin{equation}
  \label{eq:structural-equations-vector}
  X = \Lambda^TX + \epsilon,
\end{equation}
where the matrix $\Lambda = (\lambda_{ij})\in\mathbb R^{V\times V}$
holds the unknown coefficients.  Let
$\Omega = (\omega_{ij}) = \text{Var}[\epsilon]\in\mathbb R^{V\times
  V}$ be the covariance matrix of $\epsilon$, which we assume positive
definite.  Assuming further that $I - \Lambda$ is invertible, the
random vector $X= (I - \Lambda)^{-1}\epsilon$ is the unique solution
to the linear system in~(\ref{eq:structural-equations-vector}) and has
covariance matrix
\begin{equation}
  \text{Var}[X] \;=\; (I - \Lambda)^{-T}\Omega(I - \Lambda)^{-1}
  %=: \phi(\Lambda, \Omega)
  .\label{eq:VarX}
\end{equation}

In the introduction we focused on the case where the individual error
terms $\epsilon_i$ are independent.  Their covariance matrix $\Omega$
is then diagonal.  In this case, a model postulating that some of the
coefficients in $\Lambda$ are zero is conveniently represented by a
directed graph, as was our setup in Section~\ref{sec:introduction}.
Going forward, we also allow for dependence among the $\epsilon_i$ and
a possibly non-diagonal matrix $\Omega$.  Nonzero off-diagonal terms of
$\Omega$ are commonly represented by adding bidirected edges to the
considered directed graph.  %, which yields a mixed graph.

A {\em mixed graph} is a triple $G = (V, \mathcal{D}, \mathcal{B})$, where
$\mathcal{D}\subset V\times V$ is the set of {\em directed} edges, and $\mathcal{B}$ is
the set of {\em bidirected} edges which is comprised of unordered pairs
of elements of $V$.  We denote a directed edge from $i$ to $j$ by
$i\to j$, and a bidirected edge by $i\bi j$.  Let
$\mathbb R^\mathcal{D}$ be the set of $V\times V$ matrices $\Lambda$ with
support $\mathcal{D}$, i.e.,
\[
\mathbb R^\mathcal{D} = \{\Lambda\in\mathbb R^{V\times V}: \lambda_{ij} = 0
\text{ if } i\to j\not\in \mathcal{D}\}.
\]
Let $\mathbb R^\mathcal{D}_{\text{reg}}$ be the subset of matrices
$\Lambda\in\mathbb R^\mathcal{D}$ for which $I - \Lambda$ is invertible.  Let
$\mathit{PD}_V$ be the cone of positive definite $V\times V$ matrices, and
define $\mathit{PD}(\mathcal{B})$ to be the subcone of matrices supported over $\mathcal{B}$, i.e.,
\[
\mathit{PD}(\mathcal{B}) = \{\Omega= (\omega_{ij})\in \mathit{PD}_V: \omega_{ij} = 0 \text{ if }
i\neq j \text{ and } i\leftrightarrow j\not\in \mathcal{B}\}.
\]
The mixed graph $G$ is acyclic if its directed part $(V,\mathcal{D})$ does not
contain any directed cycles.  In this case, $V$ can be ordered such
that all matrices $\Lambda\in\mathbb R^\mathcal{D}$ are strictly
upper-triangular.  Thus, the determinant $|I-\Lambda|= 1$ and
$\mathbb R^\mathcal{D} = \mathbb R^\mathcal{D}_{\text{reg}}$. By Cramer's rule, the
covariances in $\text{Var}[X]$ in~(\ref{eq:VarX}) are then polynomial
functions of the entries of $\Lambda$ and $\Omega$.

Taking the error $\epsilon$ to be Gaussian, a given mixed graph
induces the following statistical model for the joint distribution of
$X$.

\begin{definition}\label{def:LSEM}
  The {\em linear structural equation model} given by a mixed graph
  $G = (V, \mathcal{D}, \mathcal{B})$ is the family of all multivariate normal
  distributions on $\mathbb R^V$ with covariance matrix in the set
$$\mathcal{M}(G) = \left\{(I - \Lambda)^{-T}\Omega(I - \Lambda)^{-1}:
  \Lambda \in \mathbb R^\mathcal{D}_{\text{reg}},\; \Omega \in \mathit{PD}(\mathcal{B})\right\}.$$ 
\end{definition}

The set $\mathbb R^\mathcal{D}_{\text{reg}}\times \mathit{PD}(\mathcal{B})$ is
semialgebraic.  Since $\mathcal{M}(G)$ is the image of this set under
a rational map, the Tarski-Seidenberg theorem yields that
$\mathcal{M}(G)$ itself is a semialgebraic set and, thus, admits a
polynomial description.  In this paper, we are interested in studying
polynomial equations that are satisfied by the matrices in
$\mathcal{M}(G)$.  With $\Sigma = (\sigma_{ij})$ interpreted as a
symmetric $V\times V$ matrix of indeterminates, define
$\mathbb R[\Sigma]$ to be the ring of polynomials in the
$\sigma_{ij}$.  Then the polynomial relations we seek to understand
make up the vanishing ideal
$$
\mathcal{I}(G):=\{f\in\mathbb R[\Sigma]: f(\Sigma) = 0 \text{ for all }
\Sigma\in\mathcal M(G)\}.
$$

Suppose a variable $X_j$, $j\in V$, is hidden.  Then the remaining
variables $(X_i:i\not=j)$ have their covariance matrix in the set
obtained by projecting each matrix in $\mathcal{M}(G)$ onto its
$(V\setminus\{j\})\times (V\setminus\{j\})$ submatrix.  Two comments
are in order.  First, we emphasize that for a fixed $j\in V$, the
polynomials $f\in\mathcal{I}(G)$ that do not involve any of the
indeterminates indexed by $j$, i.e., $f$ is free of $\sigma_{jk}$ for
$k\in V$, give precisely the polynomial constraints holding for the
model in which random variable $X_j$ is hidden.  Second, the paradigm
of mixed graphs allows one to directly capture relations after
projection.  Indeed, a graphical operation known as ``latent
projection'' creates a new mixed graph $G'$ over the observed
variables that represents key relations among covariances of observed
variables; see \cite[Section 2.6]{pearl:book},
\cite{koster:2002} or \cite{wermuth}.  For
instance, the three examples from our introduction would be
represented by the three mixed graphs in
Figure~\ref{fig:iv-verma-ommen-mixed}.  In these examples, the ideal
$\mathcal{I}(G')$ of the given mixed graph coincides with the ideal of
polynomial relations among the observed covariances in the hidden
variable model given by the original DAG $G$.

\begin{figure}[t]
  \centering
    \tikzset{
      every node/.style={circle, draw,inner sep=1mm, minimum size=0.55cm, draw, thick, black, fill=gray!20, text=black},
      every path/.style={thick}
    }
  \scalebox{0.85}{
    (a) \;
    \begin{tikzpicture}[align=center,node distance=2.2cm]
      \node [] (1) {1};
      \node [] (2) [right of=1] {2};
      \node [] (3) [right of=2]    {3};
      \node [] (4) [right of=3]    {4};
      
      \draw[blue] [-latex] (1) edge (2);
      \draw[blue] [-latex] (2) edge (3);
      \draw[blue] [-latex] (3) edge (4);
      \draw[blue] [-latex, bend right] (1) edge (3);
      \draw[red] [latex-latex, bend left] (3) edge (4);
    \end{tikzpicture}
    \qquad 
    (b) \;
    \begin{tikzpicture}[align=center,node distance=2.2cm]
      \node [] (1) {1};
      \node [] (2) [right of=1] {2};
      \node [] (3) [right of=2]    {3};
      \node [] (4) [right of=3]    {4};
      
      \draw[blue] [-latex] (1) edge (2);
      \draw[blue] [-latex] (2) edge (3);
      \draw[blue] [-latex] (3) edge (4);
      \draw[blue] [-latex, bend right] (1) edge (3);
      \draw[red] [latex-latex, bend left] (2) edge (4);
    \end{tikzpicture}
  }
  \vspace{.3cm}
  
    \scalebox{0.85}{
    (c) \;
    \begin{tikzpicture}[align=center,node distance=2.2cm]
      \node [] (1) {1};
      \node [] (2) [right of=1] {2};
      \node [] (3) [right of=2]    {3};
      \node [] (4) [right of=3]    {4};
      
      \draw[blue] [-latex] (2) edge (3);
      \draw[blue] [-latex] (3) edge (4);
      \draw[red] [latex-latex, bend left] (1) edge (3);
      \draw[red] [latex-latex] (1) edge (2);
      \draw[red] [latex-latex, bend left] (2) edge (4);
      %% just for figure placement
      \draw[white] [latex-latex, bend right] (1) edge (2);
    \end{tikzpicture}
  }
  \caption{(a)-(c) Mixed graphs obtained by latent projection of the
    DAGs in Figures~\ref{fig:iv-dag}-\ref{fig:ommen-dag}, respectively.}\label{fig:iv-verma-ommen-mixed}
\end{figure}
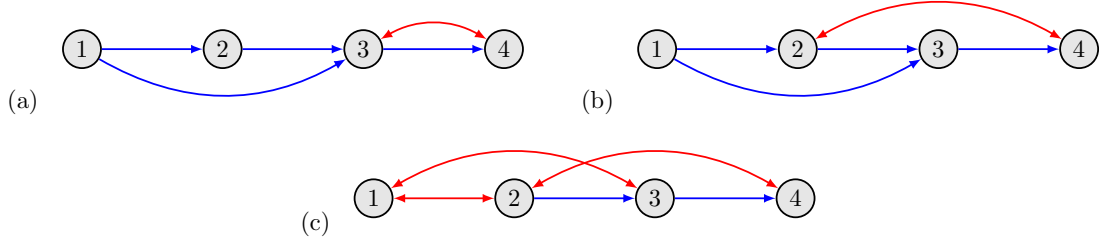

\subsection{Trek rule}
\label{sec:trek-rule}

Again let $G=(V,\mathcal{D},\mathcal{B})$ be any mixed graph, possibly cyclic.  The starting point
for any combinatorial understanding of polynomials in the vanishing
ideal $\mathcal{I}(G)$ is the \emph{trek rule}.  This rule specifies
each entry of the covariance matrix in~(\ref{eq:VarX}) as a sum of
monomials associated with certain paths in the graph.

\begin{definition}
  \label{def:trek}
  A {\em trek} is a path $\tau$ of the form
  \begin{enumerate}
  \item[(a)]
    $i_\ell \leftarrow \cdots \leftarrow i_1 \leftrightarrow j_1
    \rightarrow \cdots \rightarrow j_r$, or
  \item[(b)]
    $i_\ell \leftarrow \cdots \leftarrow i_1 = j_1
    \rightarrow \cdots \rightarrow j_r$,
  \end{enumerate}
  for integers $\ell, r \geq 0$ with $\ell+r\ge 1$.  Here, a path may
  visit a vertex more than once.  If $\ell=0$, the trek is simply the
  directed path $j_1\to \dots\to j_r$.  Similarly, it is
  $i_\ell\leftarrow\dots\leftarrow i_1$ if $r=0$.  We call $\tau$ a trek from $i_\ell$ to $j_r$ or also a
  trek between $i_\ell$ and $j_r$.  The sets $\{i_k:k=1,\dots,\ell\}$
  and $\{j_k:k=1,\dots,r\}$ are the \emph{left side} and the
  \emph{right side} of $\tau$, respectively.
\end{definition} 

Let $\Lambda=(\lambda_{ij})\in\mathbb{R}^\mathcal{D}_{\text{reg}}$ and
$\Omega=(\omega_{ij})\in\mathit{PD}(\mathcal{B})$.  To any trek $\tau$, specified
as in Definition~\ref{def:trek}, associate a {\em trek monomial}
\begin{equation}\label{eq:trek-monomial}
  \sigma(\tau) \;=\; \omega_{i_1j_1}\prod_{k=1}^{\ell-1}
  \lambda_{i_ki_{k+1}}\prod_{k=1}^{r-1} \lambda_{j_kj_{k+1}}. 
\end{equation}
The \emph{trek rule} now states that the covariance
matrix $\Sigma=(\sigma_{ij})=(I-\Lambda)^{-T}\Omega(I-\Lambda)^{-1}$
has its entries given by
\begin{equation}
  \sigma_{ij}\;=\; \sum_{\text{treks }\tau \text{ from }i \text{ to }
    j} \sigma(\tau).\label{eq:trek-rule}
\end{equation}
The rule, which originates in the work of \cite{wright:1934}, is
obtained by observing that
$(I-\Lambda)^{-1}=I+\Lambda+\Lambda^2+\dots$.  The right-hand side
of~\eqref{eq:trek-rule} is a polynomial when $G$ is acyclic and a
(formal) power series otherwise.

\subsection{Conditional independence and subdeterminants}

The notion of $d$-separation allows one to decide by inspection of
paths in a mixed graph $G$ whether a conditional independence relation
holds for all distributions in the model given by $G$; see
e.g.~\cite[Section 10]{D}.  In algebraic terms, for a Gaussian joint
distribution, variables $X_i$ and $X_j$ are conditionally
independent given a subvector $X_S$ with $i,j\not\in S$ if and only if
the subdeterminant $|\Sigma_{iS, jS}|$ is zero.  Here, $iS$ denotes
the union of a singleton set $\{i\}$ and the set $S$.  Thus,
$d$-separation gives a combinatorial characterization of when a
subdeterminant of the form $|\Sigma_{iS, jS}|$ belongs to the ideal
$\mathcal{I}(G)$.  If $G$ is a DAG, then the covariance model
$\mathcal{M}(G)$ admits a semi-algebraic description by conditional
independence.  Indeed, $\mathcal{M}(G)$ is the set of positive
definite matrices $\Sigma$ for which all conditional independence
determinants $|\Sigma_{iS, jS}|$ associated with the graph $G$ vanish.
This is also true for mixed graphs that are maximal ancestral
\cite{richardson:2002}, but false more generally as the examples in
the introduction show.

In seminal work, Sullivant, Talaska and Draisma \cite{STD} move beyond
conditional independence determinants and give a combinatorial
characterization of when an arbitrary subdeterminant $|\Sigma_{A,B}|$ is
in $\mathcal I(G)$.  We briefly review their concept of
trek-separation; see also \cite[Section 11]{D}.

\begin{definition}
  \label{def:trek-sep}
  Two sets $A, B\subseteq V$ are {\em trek-separated} by the pair
  $(S_L,S_R)$, where $S_L, S_R\subseteq V$, if every trek between a
  vertex from $A$ and a vertex from $B$ intersects either $S_L$ on its
  left side or $S_R$ on its right side.
\end{definition}

In the case $|A|=|B|=m$ the following theorem shows that $|\Sigma_{A,B}|\in\mathcal I(G)$
if and only if the sets of vertices $A$ and $B$ are {\em
  trek-separated} by a pair $(S_L,S_R)$ with $|S_L| + |S_R|<|A|$.

\begin{theorem}[Thm.~2.17,~\cite{STD}]\label{thm:trek_separation}
  Let $A,B\subseteq V$.  The submatrix $\Sigma_{A, B}$ has rank at
  most $r$ for all covariance matrices $\Sigma\in\mathcal{M}(G)$ if
  and only if there exist subsets $S_L, S_R\subseteq V$ such that
  $|S_L| + |S_R| \leq r$ and $(S_L, S_R)$ trek-separates $A$ from $B$.
  For a generic choice of $\Sigma\in\mathcal{M}(G)$,
  $$
  \text{rank}(\Sigma_{A,B}) \;=\; \min\{|S_L| + |S_R|: (S_L, S_R)\
  \text{trek-separates } A \text{ from } B\}.$$ 
\end{theorem}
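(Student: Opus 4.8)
The plan is to deduce everything from the generic rank formula --- that $\rk(\Sigma_{A,B})$ equals $k:=\min\{|S_L|+|S_R|:(S_L,S_R)\text{ trek-separates }A\text{ from }B\}$ for a Zariski-generic $\Sigma\in\mathcal M(G)$ --- and to prove that formula by combining three ingredients: a \emph{walk-splitting} matrix factorization giving the bound $\rk(\Sigma_{A,B})\le|S_L|+|S_R|$ identically in the parameters; a \emph{Lindström--Gessel--Viennot-type} cancellation that controls when a fixed minor $|\Sigma_{A',B'}|$ vanishes on all of $\mathcal M(G)$; and a \emph{max-flow/min-cut} duality. Granting these: the factorization shows $\rk(\Sigma_{A,B})\le k$ for \emph{every} $\Sigma\in\mathcal M(G)$, which is the ``if'' direction of the first assertion; the flow duality produces subsets $A'\subseteq A$, $B'\subseteq B$ of size $k$ carrying a trek system with no sided intersection; the cancellation then gives $|\Sigma_{A',B'}|\not\equiv 0$, so $\rk(\Sigma_{A,B})=k$ for generic $\Sigma$; and since the hypothesis of the ``only if'' direction forces this generic rank to be $\le r$, it gives $k\le r$, i.e.\ a trek-separating pair of size $\le r$.

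For the factorization, write $\Sigma=\Phi^{\mathsf T}\Omega\Phi$ with $\Phi=(I-\Lambda)^{-1}=I+\Lambda+\Lambda^2+\cdots$, so that $\Sigma_{A,B}=\Phi_{V,A}^{\mathsf T}\Omega\,\Phi_{V,B}$ and the $(i,a)$ entry of $\Phi_{V,A}$ is the generating function of directed walks from $i$ to $a$. Classifying each such walk by its last visit to $S_L$ yields the identity $\Phi_{V,A}=\Phi_{V,A}^{\setminus S_L}+\Phi_{V,S_L}\,R_{S_L,A}$ (polynomial when $G$ is acyclic, a formal power series otherwise), where $\Phi_{V,A}^{\setminus S_L}$ collects walks avoiding $S_L$ and $R_{S_L,A}$ collects walks from $S_L$ to $A$ meeting $S_L$ only at their source. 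Hence $\Sigma_{A,B}=(\Phi_{V,A}^{\setminus S_L})^{\mathsf T}\Omega\,\Phi_{V,B}+R_{S_L,A}^{\mathsf T}\big(\Phi_{V,S_L}^{\mathsf T}\Omega\,\Phi_{V,B}\big)$, and the second summand has rank at most $|S_L|$. Splitting the first summand in the same way on the $B$ side gives $(\Phi_{V,A}^{\setminus S_L})^{\mathsf T}\Omega\,\Phi_{V,B}^{\setminus S_R}+\big((\Phi_{V,A}^{\setminus S_L})^{\mathsf T}\Omega\,\Phi_{V,S_R}\big)R_{S_R,B}$, whose second summand has rank at most $|S_R|$ and whose first summand has $(a,b)$ entry equal to the sum of trek monomials over treks from $a$ to $b$ whose left side misses $S_L$ and whose right side misses $S_R$; by the definition of trek-separation no such trek exists, so this summand is the zero matrix and $\rk(\Sigma_{A,B})\le|S_L|+|S_R|$ identically.

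For the flow duality I would build a directed network on two copies $\{v_L:v\in V\}$, $\{v_R:v\in V\}$ of the vertex set: every $v_L$ and every $v_R$ is replaced by the usual in--out gadget with a unit-capacity arc; each directed edge of $G$ contributes an infinite-capacity arc traversing the left copy against its orientation and the right copy with its orientation; each bidirected edge $i\bi j$ contributes infinite-capacity arcs $i_L\to j_R$ and $j_L\to i_R$, and one also adds $i_L\to i_R$ for every $i$ to account for treks whose two sides share their top vertex; finally a source sends unit capacity into each $a_L$ ($a\in A$) and each $b_R$ ($b\in B$) sends unit capacity to a sink. Integral $s$--$t$ flows of value $m$ then correspond exactly to size-$m$ trek systems between subsets of $A$ and $B$ whose left sides are pairwise disjoint and whose right sides are pairwise disjoint --- i.e.\ with no sided intersection --- while a minimum cut uses only gadget arcs and, reading off the chosen left gadgets as $S_L$ and right gadgets as $S_R$, is precisely a trek-separating pair. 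By the max-flow/min-cut theorem the maximum size of a no-sided-intersection trek system between subsets of $A$ and $B$ equals $k$.

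The Lindström--Gessel--Viennot step is where the real work lies, and I expect it to be the main obstacle. Expanding $|\Sigma_{A',B'}|=\sum_{\pi}\operatorname{sgn}(\pi)\prod_{a\in A'}\Sigma_{a,\pi(a)}$ and substituting the trek rule rewrites it as $\sum_{\mathcal T}\operatorname{sgn}(\pi_{\mathcal T})\,\sigma(\mathcal T)$, the sum over all trek systems $\mathcal T$ from $A'$ to $B'$. One then needs an involution that pairs, with opposite signs and equal monomials, all trek systems possessing a sided intersection: at a canonically chosen vertex lying on the left (respectively right) sides of two treks $\tau_p,\tau_q$, interchange the segments of $\tau_p$ and $\tau_q$ on that side strictly beyond the shared vertex; this recombines the same edge- and $\omega$-labels into two new treks with swapped endpoints, so $\pi_{\mathcal T}$ is transposed by $(p\,q)$ while $\sigma(\mathcal T)$ is unchanged. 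The delicate points are to make the choice of intersection canonical so that the map is a genuine fixed-point-free sign-reversing involution away from the no-sided-intersection systems, and then to show the surviving signed sum $\sum_{\mathcal T\text{ with no sided int.}}\operatorname{sgn}(\pi_{\mathcal T})\sigma(\mathcal T)$ does not itself vanish identically --- for which one isolates, e.g.\ a system minimizing the total number of edges among those realizing a given monomial, and argues its coefficient cannot be killed. This reduction and its sign bookkeeping are the heart of \cite{STD}; everything else reduces to the trek rule together with a routine network-flow argument.
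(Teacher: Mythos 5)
Your overall architecture is sound and two of your three ingredients are essentially complete. The walk\-/splitting factorization $\Phi_{V,A}=\Phi_{V,A}^{\setminus S_L}+\Phi_{V,S_L}R_{S_L,A}$ is a correct and self-contained proof of the ``if'' direction ($\rk(\Sigma_{A,B})\le|S_L|+|S_R|$ for \emph{every} $\Sigma\in\mathcal M(G)$), and your two-copy flow network with unit vertex capacities is, after unwinding, the same device as the auxiliary graph $\tilde G_{P,Q}$ plus the vertex form of Menger's theorem used in Appendix~\ref{sec:restricted-treks-appendix}; handling bidirected edges by crossing arcs $i_L\to j_R$ rather than by subdivision is a cosmetic difference. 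Where you genuinely diverge from the paper (and from \cite{STD}) is the third ingredient, and that is where your proposal has a real gap. The paper never attempts a sign-reversing involution directly on \emph{trek} systems. Instead it first passes to the canonical DAG by subdividing bidirected edges (Proposition~\ref{prop:3.9}, which requires the nontrivial Zariski-closure argument to ensure no generic information is lost), and then exploits that $\Omega$ is diagonal: Cauchy--Binet gives
\[
\det\Sigma_{A,B}\;=\;\sum_{S}\det\bigl((I-\Lambda)^{-1}\bigr)_{S,A}\,\det\bigl((I-\Lambda)^{-1}\bigr)_{S,B}\,\prod_{s\in S}\omega_{ss},
\]
and since the monomials $\prod_{s\in S}\omega_{ss}$ separate the summands (Lemma~\ref{lem:3.2}), vanishing of the determinant is equivalent to vanishing of individual \emph{directed-path} determinants, where the classical Gessel--Viennot--Lindstr\"om lemma applies off the shelf and non-vanishing is automatic (a vertex-disjoint path system in a DAG is determined by its edge set, so the surviving monomials cannot cancel).

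Your route instead expands $|\Sigma_{A',B'}|$ by the trek rule and proposes an involution on trek systems with a sided intersection, followed by an argument that the surviving signed sum over no-sided-intersection systems is not identically zero. You flag this yourself as the main obstacle, and your suggested fix (isolate a system minimizing the number of edges and argue its coefficient survives) does not obviously work: two distinct no-sided-intersection trek systems can produce the same monomial in the $\lambda$'s and $\omega$'s, because a given edge multiplicity need not decompose uniquely into left-side and right-side uses, and the $\omega_{ij}$ factors for $i\ne j$ do not tag the tops the way the diagonal $\omega_{ss}$ do in the Cauchy--Binet expansion. So the non-cancellation of the surviving sum is exactly the point your argument leaves open, and it is exactly the point the paper's detour through the canonical DAG and diagonal $\Omega$ is designed to make trivial. (A second, smaller caveat: the theorem as stated covers cyclic mixed graphs, where your formal-power-series identities and any trek-system combinatorics need the self-avoiding trek flow machinery of \cite{DST}; the paper's appendix likewise restricts to the acyclic case for its generalization.)
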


While trek-separation greatly generalizes $d$-separation and can yield
a generating set of $\mathcal{I}(G)$ for some mixed graphs
\cite{MR3566228}, it is in general not sufficient to understand the vanishing
ideal $\mathcal I(G)$ as we demonstrated in
Examples~\ref{ex:intro:verma} and~\ref{ex:intro:ommen}.

%%% Local Variables:
%%% TeX-master: "nested_dets"
%%% End:

\section{Ancestral vertices and overdetermined linear systems}
\label{sec:ancestral}

We now proceed to a first combinatorial condition (see Proposition~\ref{prop:ancestral-nested-det}) for the vanishing of
very special nested determinants (Definition~\ref{def:parentally-nested}).
% In section~\ref{sec:nested-determinants} we show
% that this characterization is a special case a combinatorial criterion
% that we call {\em restricted trek separation}.
Fix a mixed graph $G=(V,\mathcal{D},\mathcal{B})$, and let $\Sigma\in\mathit{PD}_V$.  For
a pair of matrices $\Lambda\in\mathbb{R}^\mathcal{D}_{\text{reg}}$ and
$\Omega\in\mathit{PD}(\mathcal{B})$, it holds that
\[
  \Sigma =(I-\Lambda)^{-T}\Omega(I-\Lambda)^{-1}
  \iff
  (I-\Lambda)^T\Sigma(I-\Lambda) =\Omega.
\]
In turn, $\Sigma\in\mathcal M(G)$ if and only if
\begin{align}
  \label{eq:I-L.Sigma.I-L}
  \left[(I-\Lambda)^T\Sigma(I-\Lambda)\right]_{ij} \;=\;0 \quad\forall
    i,j\in V \text{ with } i\not=j,\; i\bi j\notin \mathcal{B}.
\end{align}
For some graphs it is known that all entries of $\Lambda$ can be
recovered as rational expressions of $\Sigma$, at least for generic
choices of positive definite $\Sigma$.  For instance, the half-trek
criterion \cite{foygel:draisma:drton:2012} and its extensions
\cite{NIPS2016_6223,drton:weihs:2015,div} can be used to certify
graphically that such \emph{rational identification} of $\Lambda$ from
$\Sigma$ is possible and to find rational expressions.  If now both
the $i$th and the $j$th column of $\Lambda$ are rationally identifiable
from $\Sigma$, then the left-hand side of the equation
in~(\ref{eq:I-L.Sigma.I-L}) can be expressed as a rational function of
$\Sigma$.  If $i\not=j$ and $i\bi j\notin \mathcal{B}$, then one finds a
rational constraint on $\Sigma$ that after clearing denominators
yields a polynomial in $\mathcal{I}(G)$.  This approach is used, for
instance, in \cite{VanOmmenMooij_UAI_17}.

In this section we follow a similar approach in which we substitute
solutions for some of the entries of $\Lambda$ that appear
in~\eqref{eq:I-L.Sigma.I-L}.  However, we only linearize the equations
and then observe that nested determinantal constraints arise from
overdetermined linear equation systems.  Specifically, we study the
following constraints.

\begin{definition}
  \label{def:parentally-nested}
  Let $i$ be a vertex of the mixed graph $G$, and let $J$ be a subset
  of vertices in $G$.  Define a matrix of polynomials of size
  $(|\pa(i)|+|J|)\times (|\pa(i)|+1)$ as
  \begin{equation}
    \label{eq:nested-det-matrix}
    F_{i,J} \;=\;
    \left(\left|\Sigma_{\pa(r)\uplus
            \{r\},\pa(r)\uplus\{c\}}\right| \right)_{r\in \pa(i)\uplus
        J,
        c\in\pa(i)\uplus\{i\}}.    
  \end{equation}
  The \emph{parentally nested determinants} for the pair $(i,J)$ are
  the minors of order $|\pa(i)|+1$ of the matrix $F_{i,J}$. When
  $J=\{j\}$ is a singleton, there is only one parentally nested
  determinant
  \begin{equation}
    \label{eq:nested-det-parent}
    f_{ij} \;=\; \left| \left(\left|\Sigma_{\pa(r)\uplus
            \{r\},\pa(r)\uplus\{c\}}\right| \right)_{r\in \pa(i)\uplus\{j\},
        c\in\pa(i)\uplus\{i\}}\right|.
  \end{equation}
\end{definition}

Here, index sets are treated as multisets with possibly repeated
elements, and the determinants are formed according to a prespecified
linear order for the vertex set $V$.  The symbol $\uplus$ stands for
the sum (or disjoint union) of multisets; e.g.,
$\{1,1,2\}\uplus\{1,3\}=\{1,1,1,2,3\}$.

Suppose $j\in J\cap\pa(i)$.  Then $j$ is repeated in the row index set
$\pa(i)\uplus J$ for the matrix $F_{i,J}$.  In this case $j$ indexes two
rows for a minor, which is then zero.  In particular, if $j\in\pa(i)$
then $f_{ij}=0$.  We may therefore always restrict the set $J$ to
satisfy $J\cap\pa(i)=\emptyset$.

A repeated index may also arise for the column index sets of the
matrices whose determinants yield the entries of $F_{i,J}$.  Indeed,
if $c\in\pa(i)\cup\{i\}$ is also in $\pa(r)$ for
$r\in \pa(i)\cup\{j\}$, then the $(r,c)$ entry of $F_{i,J}$ is zero.

\begin{example}
  It holds that $f_{\text{Verma}}=f_{41}$ in
  Example~\ref{ex:intro:verma}, and $f_{\text{vOM}}=f_{41}$ in
  Example~\ref{ex:intro:ommen}.
\end{example}

In the remainder of this section, we identify conditions under which
parentally nested determinants vanish.

\begin{definition}
  A vertex $j$ in the mixed graph $G=(V,\mathcal{D},\mathcal{B})$ is \emph{ancestral} if
  (i) $j$ is not on any directed cycle, and (ii) no vertex $k\not=j$
  has both $k\bi j\in \mathcal{B}$ and a directed path from $k$ to $j$.
\end{definition}

Let $\Lambda\in\mathbb{R}^\mathcal{D}_{\text{reg}}$ and
$\Omega\in\mathit{PD}(\mathcal{B})$, and define
$\Sigma=(I-\Lambda)^{-T}\Omega(I-\Lambda)^{-1}\in\mathcal{M}(G)$.  If
$j$ is ancestral, then all treks from a vertex $r\in\pa(j)$ to $j$ end
with a directed edge pointing to $j$.   The
trek rule from~(\ref{eq:trek-rule}) then implies that
\begin{equation}
  \label{eq:ancestral}
  \Sigma_{\pa(j),\pa(j)} \Lambda_{\pa(j),j} \;=\;
  \Sigma_{\pa(j),j}.
\end{equation}
For our next result it is convenient to introduce the set of \emph{siblings} of a vertex
$j$, which is $\sib(j)=\{k\in V: k\bi i\in \mathcal{B}\}$, the set of neighbors
of $j$ in the bidirected part of the graph.

\begin{proposition}
  \label{prop:ancestral-nested-det}
  Let $i$ be a vertex of a mixed graph $G=(V,\mathcal{D},\mathcal{B})$
  such that
   \begin{enumerate}%[label=(\roman*)]
  \item[(i)] $\pa(i)\cap\sib(i)=\emptyset$,
  \item[(ii)] all vertices in $\pa(i)$ are ancestral, and
  \item[(iii)] the set $J$ of all ancestral vertices in $V\setminus(\pa(i)\cup\sib(i)\cup\{i\})$ is non-empty.
  \end{enumerate}
  Then the parentally nested determinants for the pair $(i,J)$ are
  in the vanishing ideal $\mathcal{I}(G)$.
  %
  % Let $J$ be the set of nodes $j\notin \pa(i)\cup\{i\}$ that are ancestral
  % and have $j\bi i\notin \mathcal{B}$.  Then the parentally nested determinants
  % for the pair $(i,J)$ are all in the vanishing ideal
  % $\mathcal{I}(G)$.
\end{proposition}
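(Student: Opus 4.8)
The plan is to follow the recipe sketched just above the statement: for each ancestral vertex $r$, equation~\eqref{eq:ancestral} lets us solve rationally for the coefficients $\Lambda_{\pa(r),r}$ in terms of $\Sigma$; substituting these expressions into the equations $\bigl[(I-\Lambda)^T\Sigma(I-\Lambda)\bigr]_{ri}=\omega_{ri}$ for $r\in\pa(i)\uplus J$ produces an overdetermined but consistent linear system in the unknowns $\lambda_{ki}$, $k\in\pa(i)$, whose consistency conditions yield the parentally nested determinants for $(i,J)$.

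First I would record a determinant identity. Fix a witness $\Lambda\in\mathbb{R}^\mathcal{D}_{\text{reg}}$, $\Omega\in\mathit{PD}(\mathcal{B})$ with $\Sigma=(I-\Lambda)^{-T}\Omega(I-\Lambda)^{-1}$, and put $N=(I-\Lambda)^T\Sigma$, so that $N_{rc}=\sigma_{rc}-\sum_{k\in\pa(r)}\lambda_{kr}\sigma_{kc}$. If $r$ is ancestral then~\eqref{eq:ancestral} gives $\Sigma_{\pa(r),\pa(r)}\Lambda_{\pa(r),r}=\Sigma_{\pa(r),r}$, hence $N_{rk}=0$ for every $k\in\pa(r)$. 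In the matrix $\Sigma_{\pa(r)\uplus\{r\},\,\pa(r)\uplus\{c\}}$, subtracting $\lambda_{kr}$ times the $k$-th row from the $r$-th row for each $k\in\pa(r)$ therefore turns the $r$-th row into a vector that vanishes except for the entry $N_{rc}$ in the column indexed by $c$; Laplace expansion along that row gives
\[
  \left|\Sigma_{\pa(r)\uplus\{r\},\,\pa(r)\uplus\{c\}}\right|
  \;=\; \pm\,\left|\Sigma_{\pa(r),\pa(r)}\right|\cdot N_{rc}
\]
on $\mathcal M(G)$, where the sign records the positions of $r$ and $c$ in the prescribed linear order. Since $\Sigma$ is positive definite, $\left|\Sigma_{\pa(r),\pa(r)}\right|>0$.

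Next comes the rank bound. For $\Sigma\in\mathcal M(G)$ one has $(I-\Lambda)^T\Sigma(I-\Lambda)=\Omega$, so for every vertex $r$,
\[
  N_{ri}-\sum_{k\in\pa(i)}\lambda_{ki}N_{rk}\;=\;\bigl[(I-\Lambda)^T\Sigma(I-\Lambda)\bigr]_{ri}\;=\;\omega_{ri}.
\]
Hypotheses (i) and (iii) force $\omega_{ri}=0$ whenever $r\in\pa(i)\uplus J$: a parent of $i$ differs from $i$ as $G$ has no self-loops and is not a sibling of $i$ by (i), while $J\subseteq V\setminus(\pa(i)\cup\sib(i)\cup\{i\})$ by (iii). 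Hence the $(|\pa(i)|+|J|)\times(|\pa(i)|+1)$ matrix $\bigl(N_{rc}\bigr)_{r\in\pa(i)\uplus J,\;c\in\pa(i)\uplus\{i\}}$ has its column indexed by $i$ equal to the linear combination $\sum_{k\in\pa(i)}\lambda_{ki}\cdot(\text{column indexed by }k)$ of its remaining columns, so its rank is at most $|\pa(i)|$ and all of its minors of order $|\pa(i)|+1$ vanish on $\mathcal M(G)$.

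Combining the two steps, a minor of order $|\pa(i)|+1$ of $F_{i,J}$ equals, on $\mathcal M(G)$, a product of the positive scalars $\left|\Sigma_{\pa(r),\pa(r)}\right|$ over the chosen rows, times a sign, times the corresponding minor of $\bigl(N_{rc}\bigr)$, and hence vanishes; thus the parentally nested determinants for $(i,J)$ lie in $\mathcal I(G)$. The step I expect to require the most care is the sign in the identity of the second paragraph: it depends on the position of $c$ relative to $\pa(r)$, so to transfer the rank bound on $\bigl(N_{rc}\bigr)$ to $F_{i,J}$ one has to verify that these signs assemble into a row-times-column pattern---which holds for a suitable choice of the linear order on $V$, for instance one placing $\pa(i)\cup\{i\}$ so that it is not separated by parents of vertices in $\pa(i)\uplus J$---or else track the signs through the Laplace expansion of each minor. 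For a singleton $J$ this subtlety disappears, as the single determinant $f_{ij}=\det F_{i,\{j\}}$ is pinned down only up to an overall sign.
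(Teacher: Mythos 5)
Your argument is essentially the paper's own proof: you use the ancestrality relation~\eqref{eq:ancestral} to eliminate $\Lambda_{\pa(r),r}$, the bordered-determinant (Schur complement) identity~\eqref{eq:schur} to clear denominators, and the equations $\bigl[(I-\Lambda)^T\Sigma(I-\Lambda)\bigr]_{ri}=0$ for $r\in\pa(i)\uplus J$ to exhibit the column of $F_{i,J}$ indexed by $i$ as a linear combination of the others, which is just a rephrasing of the paper's ``overdetermined system'' step. The sign issue you flag at the end is real only under a strict sorted-index convention; the paper's identity~\eqref{eq:schur} is stated exactly, i.e.\ with the border row $r$ and border column $c$ placed last, and under that convention your factor $\pm$ is $+1$ throughout and the rank transfer is immediate.
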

\begin{proof}
  Let $\Lambda=(\lambda_{ab})\in\mathbb{R}^\mathcal{D}_{\text{reg}}$ and
  $\Omega\in\mathit{PD}(\mathcal{B})$, and define
  $\Sigma=(I-\Lambda)^{-T}\Omega(I-\Lambda)^{-1}\in\mathcal{M}(G)$.
  %Let $J=V\setminus(\pa(i)\cup\sib(i)\cup\{i\})$.  
  Neither $\pa(i)$ nor $J$ contains vertices in $\sib(i)$.
  Fixing $r\in pa(i)\cup J$, (\ref{eq:I-L.Sigma.I-L}) implies that
  \begin{equation}
    \label{eq:1}
    \left[(I-\Lambda)^T\Sigma(I-\Lambda) \right]_{ri} \;=\; 0.
  \end{equation}
  This equation becomes
  \begin{equation}
    \label{eq:2}
    \sigma_{ri} - \Lambda_{\pa(r),r}^T
    \Sigma_{\pa(r),i}-\Sigma_{r,\pa(i)}\Lambda_{\pa(i),i}+\Lambda_{\pa(r),r}^T\Sigma_{\pa(r),\pa(i)}\Lambda_{\pa(i),i} \;=\;0.
  \end{equation}
  Since all vertices in $\pa(i)\cup J$ are ancestral, we may
  use~(\ref{eq:ancestral}) to get the rational equation
\begin{multline}
  \label{eq:3}
  \sigma_{ri} - \Sigma_{r,\pa(r)}\Sigma_{\pa(r),\pa(r)}^{-1}
  \Sigma_{\pa(r),i}-\left(\Sigma_{r,\pa(i)}-
  \Sigma_{r,\pa(r)}\Sigma_{\pa(r),\pa(r)}^{-1}\Sigma_{\pa(r),\pa(i)}\right)\Lambda_{\pa(i),i} \;=\;0.
\end{multline}
Now observe that for any vertex $c$,
\begin{align}
  \label{eq:schur}
  \left(\sigma_{rc} - \Sigma_{r,\pa(r)}\Sigma_{\pa(r),\pa(r)}^{-1}
  \Sigma_{\pa(r),c} \right) \left|\Sigma_{\pa(r),\pa(r)}\right|
  &\;=\; \left|\Sigma_{\pa(r)\cup \{r\},\pa(r)\cup\{c\}}\right|.
\end{align}
Hence, multiplying the equation in~(\ref{eq:3}) by
$\left|\Sigma_{\pa(r),\pa(r)}\right|$ gives 
\begin{align}
  \label{eq:4}
  \left|\Sigma_{\pa(r)\cup \{r\},\pa(r)\cup\{i\}}\right| - \sum_{c\in\pa(i)}
  \left|\Sigma_{\pa(r)\cup \{r\},\pa(r)\cup\{c\}}\right|\cdot \lambda_{ci}
  &\;=\;0.
\end{align}
With one equation for every $r\in\pa(i)\cup J$, the system is
overdetermined and admits a solution only if the matrix $F_{i,J}$ from Definition~\ref{def:parentally-nested} has
rank at most $|\pa(i)|$.  This in turn implies the vanishing of its
minors. Note that in the case that $i$ is not trek reachable from $r$, the last equation is trivial and corresponds to a row of zeros in $F_{i,J}$.
\end{proof}

\begin{figure}[t]
  \centering
    \tikzset{
      every node/.style={circle, draw,inner sep=1mm, minimum size=0.55cm, draw, thick, black, fill=gray!20, text=black},
      every path/.style={thick}
    }
    \scalebox{0.85}{
    \begin{tikzpicture}[align=center,node distance=2.2cm]
      \node [] (1) {1};
      \node [] (2) [right of=1] {2};
      \node [] (3) [right of=2]    {3};
      \node [] (4) [right of=3]    {4};
      
      \draw[blue] [-latex] (1) edge (2);
      \draw[blue] [-latex] (2) edge (3);
      \draw[blue] [-latex, bend left] (2) edge (4);
      \draw[blue] [-latex] (3) edge (4);
    \end{tikzpicture}
  }
  \caption{DAG on 4 vertices used to illustrate the nested determinants $f_{ij}$.}\label{fig:dag-4}
\end{figure}
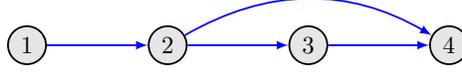

\begin{example}
  The graph $G$ from Figure~\ref{fig:dag-4} is a DAG, and thus all
  its vertices are ancestral.  As there are no bidirected edges,
  $f_{ij}\in\mathcal{I}(G)$ for all $i\not=j$.  As previously noted,
  for any graph $f_{ij}=0$ if $j\in\pa(i)$.  Here,
  $f_{21}=f_{32}=f_{42}=f_{43}=0$.  Moreover, $f_{12}=f_{34}=0$.  The
  nonzero polynomials are
  \begin{align*}
    f_{13}&=\left|\Sigma_{12,23}\right|,
          & f_{31}&=\sigma_{11}\cdot\left|\Sigma_{12,23}\right|,
    & f_{23}&=-\sigma_{12}\cdot\left|\Sigma_{12,23}\right|\\
    f_{14}&=\left|\Sigma_{123,234}\right|,
          &
            f_{41}&=\sigma_{11}\sigma_{22}\cdot\left|\Sigma_{123,234}\right|, 
    & f_{24}&=-\sigma_{12}\cdot\left|\Sigma_{123,234}\right|.    
  \end{align*}
  The irreducible polynomial $f_{13}$ corresponds to conditional
  independence of $X_1$ and $X_3$ given $X_2$.  The second irreducible
  polynomial $f_{14}$ encodes conditional independence of $X_1$ and
  $X_4$ given $(X_2,X_3)$.  It turns out that
  \begin{align*}
    \mathcal{M}(G)
    &\;=\;\{\Sigma\in\mathit{PD}_{\{1,2,3,4\}}:
      f_{13}(\Sigma)=f_{14}(\Sigma)=0\}\\
    &\;=\;\{\Sigma\in\mathit{PD}_{\{1,2,3,4\}}:
      f_{31}(\Sigma)=f_{41}(\Sigma)=0\}\\
    &\;=\;\{\Sigma\in\mathit{PD}_{\{1,2,3,4\}}:
      f_{ij}(\Sigma)=0 \;\forall i\not=j\}.
  \end{align*}
  In fact, the ideal
  $\langle f_{ij}:i\not=j\rangle=\langle f_{13},f_{14}\rangle$ differs
  from $\mathcal{I}(G)$ only through components that do not vanish at
  positive definite matrices \cite[Example 2]{roozbehani}.  Specifically,
  \[
   \langle f_{13},f_{14}\rangle \quad=\quad \mathcal{I}(G) \;\cap\;  \langle
   \left|\Sigma_{23,23}\right|,    \left|\Sigma_{13,23}\right|,
   \left|\Sigma_{12,23}\right|\rangle  .
 \]
 Here, 
 \[
   \mathcal{I}(G) \quad=\quad \langle  \left| \Sigma_{12,23}\right|,
   \left| \Sigma_{12,24}\right| ,\left| \Sigma_{12,34}\right|\rangle
 \]
 is generated by three subdeterminants, two of which are conditional
 independences.
\end{example}

\begin{figure}[t]
\centering
  \scalebox{0.85}{
\tikzset{
	every node/.style={circle, draw,inner sep=1mm, minimum size=0.55cm, draw, thick, black, fill=gray!20, text=black},
	every path/.style={thick}
}
\begin{tikzpicture}[align=center,node distance=2.2cm]
		\node [] (1) {1};
	\node [] (3) [below right of=1] {3};
	\node [] (2) [above right of=3] {2};
	\node [] (5) [below right of=2] {5};
	\node [] (4) [above right of=5] {4};

	\draw[blue] [-latex] (1) edge (3);
	\draw[blue] [-latex] (2) edge (3);
	\draw[blue] [-latex] (2) edge (5);
	\draw[blue] [-latex] (4) edge (5);
	\draw[red] [latex-latex, bend left] (1) edge (4);
	\draw[red] [latex-latex] (1) edge (5);
	\draw[red] [latex-latex] (1) edge (2);
	\draw[red] [latex-latex] (2) edge (4);
	\draw[red] [latex-latex] (3) edge (4);
      \end{tikzpicture}
      }
\caption{An ancestral graph that is not maximal.}\label{fig:ancestral-notMAG}
\end{figure}
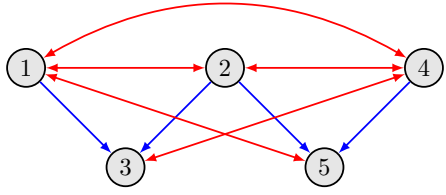

%\subsection{Ancestral graph with intersecting parent sets}
\begin{example}
  The mixed graph in Figure~\ref{fig:ancestral-notMAG} is an ancestral
  graph, that is, all vertices are ancestral \cite{richardson:2002}.  It
  is not maximal, that is, there are non-adjacent vertices,
  namely, $3$ and $5$, that cannot be $d$-separated.  There is then no
  conditional independence constraint associated to the non-adjacency.
  Precisely two of the $f_{ij}$ are nonzero, namely,
  \begin{align*}
    f_{35}&\;=\;
  \begin{vmatrix}
    |\Sigma_{1,1}| & |\Sigma_{1,2}| & |\Sigma_{1,3}|\\ 
    |\Sigma_{1,2}| & |\Sigma_{2,2}| & |\Sigma_{2,3}|\\
    |\Sigma_{124,245}| & |\Sigma_{224,245}| & |\Sigma_{234,245}|
  \end{vmatrix} \;=\;
  \begin{vmatrix}
    |\Sigma_{1,1}| & |\Sigma_{1,2}| & |\Sigma_{1,3}|\\ 
    |\Sigma_{1,2}| & |\Sigma_{2,2}| & |\Sigma_{2,3}|\\
    |\Sigma_{124,245}| & 0 & |\Sigma_{234,245}|
  \end{vmatrix},\\
    f_{53}&\;=\;
  \begin{vmatrix}
    |\Sigma_{2,2}| & |\Sigma_{2,4}| & |\Sigma_{2,5}|\\ 
    |\Sigma_{4,2}| & |\Sigma_{4,4}| & |\Sigma_{4,5}|\\
    |\Sigma_{123,122}| & |\Sigma_{123,124}| & |\Sigma_{123,125}|
  \end{vmatrix} \;=\;
  \begin{vmatrix}
    |\Sigma_{2,2}| & |\Sigma_{2,4}| & |\Sigma_{2,5}|\\ 
    |\Sigma_{4,2}| & |\Sigma_{4,4}| & |\Sigma_{4,5}|\\
    0 & |\Sigma_{123,124}| & |\Sigma_{123,125}|
  \end{vmatrix}.
  \end{align*}
  In fact, $f_{35}=f_{53}$, and $\mathcal{I}(G)=\langle
  f_{35}\rangle=\langle f_{53}\rangle$.  We note that there is also
  the alternative representation of 
  \begin{equation*}
    f_{35} \;=\;
      \begin{vmatrix}
      |\Sigma_{12,12}| & |\Sigma_{12,23}|\\ 
      |\Sigma_{124,245}| & |\Sigma_{234,245}|
    \end{vmatrix} 
    \;=\;
      \begin{vmatrix}
      |\Sigma_{24,24}| & |\Sigma_{24,25}|\\ 
      |\Sigma_{123,124}| & |\Sigma_{123,125}|
    \end{vmatrix}\; =\;
    f_{53}.
  \end{equation*}
%Here, we are looking at treks between $\{4,3\}$ and $\{4,5\}$ restricted to $V_L=\{3,4,5\}$ and $V_R=\{3,4,5\}$. All such treks go through 4 on the right.
\end{example}

\medskip

We now give a model description for a class of graphs that includes
all ancestral graphs.  It also covers the two graphs from
Figure~\ref{fig:iv-verma-ommen-mixed}(b)(c).  Recall that a
\emph{sink} of a mixed graph $G=(V,\mathcal{D},\mathcal{B})$ is any
vertex that is not a parent of any other vertex.  A subgraph of $G$ is
a mixed graph $G'=(V',\mathcal{D}',\mathcal{B}')$ with
$V'\subseteq V$, $\mathcal{D}'\subseteq \mathcal{D}$, and
$\mathcal{B}'\subseteq \mathcal{B}$.  We call $G$ \emph{globally
  identifiable} if $G$ is acyclic and none of its subgraphs
$G'=(V',\mathcal{D}',\mathcal{B}')$ has both a connected bidirected
part $(V',\mathcal{B}')$ and a unique sink vertex in its directed part
$(V',\mathcal{D}')$.  

For any set of
polynomials $\mathcal{F}\subset\mathbb{R}[\Sigma]$, we let
$\mathcal{V_F}=\{\Sigma:f(\Sigma)=0 \;\forall f\in\mathcal{F}\}$ be
the algebraic subset it defines in the space of symmetric matrices.

\begin{theorem}
  \label{thm:cut-ancestral}
  Let $G=(V,\mathcal{D},\mathcal{B})$ be a globally identifiable mixed graph with vertex
  set $V=[p]\equiv\{1,\dots,p\}$ enumerated in a topological order.
  Suppose all vertices in $[p-1]$ are ancestral.  Let $\mathcal{F}(G)$
  be the set of all parentally nested determinants obtained from the
  pairs $(i,[i-1]\setminus (\pa(i)\cup\sib(i)))$ for $i\in V$.  Then
  \[
    \mathcal{M}(G)\;=\; \mathit{PD}_V\cap\mathcal{V}_{\mathcal{F}(G)}.
  \]
\end{theorem}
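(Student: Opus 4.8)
The inclusion $\mathcal{M}(G)\subseteq \mathit{PD}_V\cap\mathcal{V}_{\mathcal{F}(G)}$ is already in hand: every parentally nested determinant in $\mathcal{F}(G)$ is built from a pair $(i,J)$ satisfying hypotheses (i)--(iii) of Proposition~\ref{prop:ancestral-nested-det} (hypothesis (ii) uses that all of $[p-1]$ is ancestral, so in particular all parents of any $i$ are ancestral; when $J=\emptyset$ there is nothing to prove for that $i$), so Proposition~\ref{prop:ancestral-nested-det} puts each such polynomial in $\mathcal{I}(G)$, and $\mathcal{M}(G)\subseteq\mathit{PD}_V$ by definition. The real content is the reverse inclusion: given $\Sigma\in\mathit{PD}_V$ at which every polynomial in $\mathcal{F}(G)$ vanishes, we must produce $\Lambda\in\mathbb{R}^{\mathcal{D}}$ and $\Omega\in\mathit{PD}(\mathcal{B})$ with $\Sigma=(I-\Lambda)^{-T}\Omega(I-\Lambda)^{-1}$. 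The plan is to construct $\Lambda$ column by column following the topological order, and then \emph{define} $\Omega:=(I-\Lambda)^{T}\Sigma(I-\Lambda)$ and check that it has the right support and is positive definite.

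The column construction works as follows. For a vertex $i$, the defining equations (as in the proof of Proposition~\ref{prop:ancestral-nested-det}) that $\Lambda_{\pa(i),i}$ must satisfy are, for each $r\in \pa(i)\cup J_i$ with $J_i=[i-1]\setminus(\pa(i)\cup\sib(i))$,
\begin{equation*}
  \bigl|\Sigma_{\pa(r)\cup\{r\},\pa(r)\cup\{i\}}\bigr|
  \;=\; \sum_{c\in\pa(i)} \bigl|\Sigma_{\pa(r)\cup\{r\},\pa(r)\cup\{c\}}\bigr|\,\lambda_{ci}.
\end{equation*}
Since $\Sigma$ is positive definite, the rows of $F_{i,J_i}$ indexed by $r\in\pa(i)$ already form an invertible $|\pa(i)|\times|\pa(i)|$ submatrix after deleting the last column — this is essentially $\Sigma_{\pa(i),\pa(i)}$ up to nonzero Schur-complement factors and a triangular change of basis — so the subsystem coming from $r\in\pa(i)$ has a unique solution $\Lambda_{\pa(i),i}$, and it is exactly the solution that the ancestrality relation~\eqref{eq:ancestral} would force, namely $\Lambda_{\pa(i),i}=\Sigma_{\pa(i),\pa(i)}^{-1}\Sigma_{\pa(i),i}$. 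The vanishing of the parentally nested determinants for $(i,J_i)$ then says precisely that this same $\Lambda_{\pa(i),i}$ also satisfies the remaining equations indexed by $r\in J_i$. This gives $\lambda_{ri}=0$-type relations: reading the $r$th equation back through~\eqref{eq:schur}, it states that $[(I-\Lambda)^T\Sigma(I-\Lambda)]_{ri}=0$ for every $r\in J_i$, i.e.\ for every $r\in[i-1]$ that is not a parent and not a sibling of $i$. Combined with the fact that $\Lambda$ is strictly upper triangular (so $[(I-\Lambda)^T\Sigma(I-\Lambda)]_{ri}$ for $r>i$ or $r\in\pa(i)$ is unconstrained or automatically compatible), this shows that $\Omega:=(I-\Lambda)^T\Sigma(I-\Lambda)$ vanishes off the bidirected support $\mathcal{B}$, i.e.\ $\Omega\in\mathit{PD}(\mathcal{B})$ once we know it is positive definite — and it is, being a congruence transform of the positive definite $\Sigma$ by the invertible matrix $I-\Lambda$.

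The one place where more care is needed, and which I expect to be the main obstacle, is the vertex $p$: the hypothesis only guarantees that $[p-1]$ is ancestral, so $p$ itself might fail to be ancestral and the clean relation~\eqref{eq:ancestral} need not hold with $j=p$. Here is where \emph{global identifiability} must enter. The point is that for $r\in J_p$ one needs $[(I-\Lambda)^T\Sigma(I-\Lambda)]_{rp}=0$, and for $r\in\pa(p)$ one needs nothing (those entries land on the diagonal block structure compatibly); but one must still verify that no bidirected edge is forced where $\mathcal{B}$ has none, which could fail if a connected bidirected component together with $p$ as its unique sink created an obstruction to choosing $\Lambda_{\pa(p),p}$ consistently. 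The condition that no subgraph has both a connected bidirected part and a unique sink is exactly designed to rule this out, and I would prove the needed statement by induction on $p$: restricting $\Sigma$ to $[p-1]$ and invoking the inductive hypothesis handles all columns but the last, and then a direct argument — using that $\{p\}$ union any bidirected component of a subgraph cannot have $p$ as unique directed sink — shows the last column can be completed. I would also need the routine check that the various Schur-complement factors $|\Sigma_{\pa(r),\pa(r)}|$ multiplied through the equations are nonzero on $\mathit{PD}_V$, so that passing between the polynomial identities $f\in\mathcal{F}(G)$ and the rational identities~\eqref{eq:3} loses nothing; this is immediate from positive definiteness but should be stated. Assembling these pieces gives $\Sigma=(I-\Lambda)^{-T}\Omega(I-\Lambda)^{-1}\in\mathcal{M}(G)$, completing the reverse inclusion.
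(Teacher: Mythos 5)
Your overall architecture --- induction on $p$, solving the overdetermined linear system \eqref{eq:4} for the last column, and defining $\Omega=(I-\Lambda)^T\Sigma(I-\Lambda)$, which is positive definite by congruence --- matches the paper's proof. But the step you treat as routine is exactly where the real content lies, and your justification of it does not hold up. You claim that the rows of $F_{i,J_i}$ indexed by $r\in\pa(i)$ form, after deleting the last column and dividing out the factors $|\Sigma_{\pa(r),\pa(r)}|$, an invertible matrix that is ``essentially $\Sigma_{\pa(i),\pa(i)}$ up to nonzero Schur-complement factors and a triangular change of basis,'' with unique solution $\Lambda_{\pa(i),i}=\Sigma_{\pa(i),\pa(i)}^{-1}\Sigma_{\pa(i),i}$. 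Neither assertion is justified: by \eqref{eq:schur} the $(r,c)$ entry of that block is $\sigma_{rc}-\Sigma_{r,\pa(r)}\Sigma_{\pa(r),\pa(r)}^{-1}\Sigma_{\pa(r),c}$, a conditional covariance in which each row conditions on a \emph{different} set $\pa(r)$ (in general not contained in $\pa(i)$), so the block is not a triangular transform of $\Sigma_{\pa(i),\pa(i)}$, positive definiteness of $\Sigma$ alone does not make it invertible, and the solution of this square subsystem is not $\Sigma_{\pa(i),\pa(i)}^{-1}\Sigma_{\pa(i),i}$ for an arbitrary $\Sigma\in\mathit{PD}_V$ --- that identity is \eqref{eq:ancestral} applied to a matrix you have not yet shown lies in the model, and for $i=p$, which need not be ancestral, \eqref{eq:ancestral} is not even available.

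The correct mechanism, and the only place global identifiability enters, is a full-column-rank statement for the \emph{rectangular} coefficient matrix, not invertibility of a square block. After the induction hypothesis produces $\Lambda',\Omega'$ with $\Sigma_{[p-1],[p-1]}=(I-\Lambda')^{-T}\Omega'(I-\Lambda')^{-1}$, the normalized matrix $\bar F$ satisfies $\bar F_{[p-1]\setminus\sib(p),\pa(p)}=\left[\Omega'(I-\Lambda')^{-1}\right]_{[p-1]\setminus\sib(p),\pa(p)}$ by \eqref{eq:schur} and \eqref{eq:ancestral}, and Lemma~2 of \cite{MR2816341} shows this matrix has full column rank precisely because $G$ is globally identifiable. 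Only then does the rank deficiency of $F_{p,J_p}$ --- which is all that the vanishing of the parentally nested determinants gives you --- force a kernel vector whose last coordinate is nonzero, so that one can solve for $\Lambda_{\pa(p),p}$ and conclude \eqref{eq:I-L.Sigma.I-L}; without the rank lemma the kernel vector could be supported entirely on the $\pa(p)$ coordinates and yield no admissible last column. Your ``no forced bidirected edge'' discussion gestures at the right hypothesis but not at this mechanism, so as written the proposal has a genuine gap at its central step.
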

\begin{proof}
  The inclusion
  $\mathcal{M}(G)\subseteq
  \mathit{PD}_V\cap\mathcal{V}_{\mathcal{F}(G)}$ follows from
  application of Proposition~\ref{prop:ancestral-nested-det}.
  
  To show the reverse inclusion, we proceed by induction on the number
  of vertices $p$.  The statement is trivial for $p=1$.  In the
  induction step, let
  $\Sigma\in\mathit{PD}_V\cap\mathcal{V}_{\mathcal{F}(G)}$.  Let
  $\Sigma_{[p-1],[p-1]}$ be the submatrix obtained by removing the
  $p$th row and column.  Let $G[p-1]=([p-1],\mathcal{D}[p-1],\mathcal{B}[p-1])$ be the
  subgraph induced by $[p-1]$.  
  Now,
  $\Sigma_{[p-1],[p-1]}\in\mathit{PD}_{[p-1]}\cap\mathcal{V}_{\mathcal{F}(G[p-1])}$.
  The induction hypothesis yields that
  $\Sigma_{[p-1],[p-1]}\in\mathcal{M}(G[p-1])$.  Let
  $\Sigma_{[p-1],[p-1]}=(I- \Lambda')^{-T}\Omega'(I-\Lambda')^{-1}$ for
  $\Lambda'\in\mathbb{R}^{\mathcal{D}[p-1]}$ and $\Omega'\in\mathit{PD}(\mathcal{B}[p-1])$.

  Consider the matrix $F_{p,[p-1]\setminus(\pa(p)\cup \sib(p))}$ from Definition~\ref{def:parentally-nested}
  evaluated at the given matrix $\Sigma$.  For each
  $r\in[p-1]\setminus\sib(p)$, divide the corresponding row by
  $|\Sigma_{\pa(r),\pa(r)}|>0$.  The resulting matrix $\bar F$ has
  entries
  \begin{equation}
    \sigma_{rc} - \Sigma_{r,\pa(r)}\Sigma_{\pa(r),\pa(r)}^{-1}
    \Sigma_{\pa(r),c} 
  \end{equation}
  for $r\in[p-1]\setminus\sib(p)$ and $c\in\pa(p)\cup\{p\}$;
  recall~\eqref{eq:schur}.  Using~\eqref{eq:ancestral}, we obtain that
  \begin{equation}
    \bar F \;=\; \left[ (I- \Lambda')^T\Sigma_{[p-1],V}
    \right]_{[p-1]\setminus\sib(p),\pa(p)\cup\{p\}}.
  \end{equation}
  Form the submatrix $\bar F_{[p-1]\setminus\sib(p),\pa(p)}$, that is,
  we omit the column indexed by $p$.  Then
  \begin{align*}
    \bar F_{[p-1]\setminus\sib(p),\pa(p)}
    &\;=\; \left[
      \Omega'(I-\Lambda')^{-1}
    \right]_{[p-1]\setminus\sib(p),\pa(p)}.
  \end{align*}
  Lemma 2 in \cite{MR2816341} yields that
  $\bar F_{[p-1]\setminus\sib(p),\pa(p)}$ has full column rank.

  Since $\Sigma\in\mathcal{V}_{\mathcal{F}(G)}$, the matrix
  $F_{p,[p-1]\setminus(\pa(p)\cup \sib(p))}$ and thus also $\bar F$
  do not have full column rank.  We conclude that the kernel of
  $\bar F$ contains a vector $x\in\mathbb{R}^{\pa(p)\cup\{p\}}$ for
  which the last coordinate $x_p\not=0$.  Dividing $x_{\pa(p)}$ by
  $-x_p$ gives a vector $\lambda_{\pa(p),p}\in\mathbb{R}^{\pa(p)}$
  that solves the equation system in~\eqref{eq:4}.  Define a
  $p\times p$ matrix $\Lambda\in\mathbb{R}^\mathcal{D}$ by using
  $\lambda_{\pa(p),p}$ to define its last column.  Then $\Lambda$
  solves~\eqref{eq:1} for $i=p$ and all $r\in[p-1]\setminus\sib(p)$
  and thus also~\eqref{eq:I-L.Sigma.I-L}.  Therefore,
  $\Sigma\in\mathcal{M}(G)$.
\end{proof}

The above facts leverage existence of ancestral vertices.  In the next
sections, we seek to give a more general condition for the vanishing of
nested determinants.  The results on vanishing nested determinants
from this section can be recovered as a special case; see
Proposition~\ref{prop:restricted_trek_ancestral}.

\section{Restricted Trek Separation}
\label{sec:restricted-treks}

As we reviewed in Section~\ref{def:trek-sep}, the notion of trek
separation \cite{STD} provides a combinatorial characterization of
when a subdeterminant of the covariance matrix $\Sigma$ vanishes over
a model $\mathcal{M}(G)$.  Underlying the trek separation result we
stated in Theorem~\ref{thm:trek_separation} is the observation that
determinants correspond to sums of certain products of trek monomials.
In this section, we recall this observation and then introduce a
notion of {\em restricted trek separation}, in which separation only
needs to occur with respect to treks that avoid certain vertices on
their left or right sides.  This notion will be used in
Section~\ref{sec:nested-determinants} to obtain conditions that imply
the vanishing of nested determinants.

Let $A$ and $B$ be two subsets of the vertex set of a mixed graph $G$,
with $|A| = |B|$.  A \emph{system of treks} from $A$ to $B$ is a set
of treks that each are between a vertex in $A$ and a vertex in $B$.  Let
$\mathcal{T}$ be such a system.  Then $\mathcal{T}$ has \emph{no sided intersection}
if any two distinct treks in $\mathcal{T}$ have disjoint left sides and
disjoint right sides.  In particular, each vertex in $A$ and each vertex
in $B$ is on precisely one trek, so that $\mathcal{T}$ induces a
bijection between $A$ and $B$.  Fixing an ordering of the elements of
$A$ and $B$, the trek system induces a permutation of $B$ in which
the $i$th element of $B$ is mapped to the end point of the trek that
starts at the $i$th element of $A$.  Write $(-1)^\mathcal{T}$ for the sign of
this permutation.  Now define
\begin{equation}
  \label{eq:PAC}
  \mathcal P_{A, B} \;=\; \sum  (-1)^\mathcal{T}
  \prod_{\tau\in\mathcal{T}} \sigma(\tau)
\end{equation}
with the summation being over all systems of treks $\mathcal{T}$ from
$A$ to $B$ with no sided intersection; recall the definition of trek
monomials from~\eqref{eq:trek-monomial}.  

\begin{theorem}[\cite{DST}]\label{TrekSystems} Suppose the underlying
  graph $G$ is acyclic.  Then the determinant of $\Sigma_{A, B}$
  equals $\mathcal P_{A, B}$.
\end{theorem}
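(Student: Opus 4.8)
The plan is to expand the determinant via the Leibniz formula and then apply the trek rule entrywise, reorganizing the resulting sum so that the combinatorial bijection between "signed permutations paired with trek choices" and "trek systems with no sided intersection" becomes transparent. Concretely, write $A = \{a_1,\dots,a_m\}$ and $B = \{b_1,\dots,b_m\}$ in the fixed orderings. First I would write
\[
  \det(\Sigma_{A,B}) \;=\; \sum_{\pi \in S_m} \operatorname{sgn}(\pi) \prod_{k=1}^m \sigma_{a_k\, b_{\pi(k)}},
\]
and then substitute the trek rule $\sigma_{a_k b_{\pi(k)}} = \sum_{\tau_k} \sigma(\tau_k)$, where $\tau_k$ ranges over treks from $a_k$ to $b_{\pi(k)}$. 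Since $G$ is acyclic, each such sum is a finite sum of monomials, so expanding the product gives
\[
  \det(\Sigma_{A,B}) \;=\; \sum_{\pi \in S_m} \operatorname{sgn}(\pi) \sum_{(\tau_1,\dots,\tau_m)} \prod_{k=1}^m \sigma(\tau_k),
\]
the inner sum over all tuples where $\tau_k$ is a trek from $a_k$ to $b_{\pi(k)}$. Collecting $\pi$ and the tuple into a single object, this is a signed sum over all trek systems $\mathcal{T}$ from $A$ to $B$ (not necessarily without sided intersection), with sign $(-1)^{\mathcal{T}}$, which matches $\mathcal{P}_{A,B}$ except that $\mathcal{P}_{A,B}$ restricts to systems with no sided intersection. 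So the theorem reduces to showing that the contributions of all trek systems that \emph{do} have a sided intersection cancel in pairs.

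The heart of the argument — and the step I expect to be the main obstacle — is constructing a sign-reversing involution on the set of (trek system, monomial) pairs coming from systems with a sided intersection. The standard Lindström–Gessel–Viennot–type idea is: given a trek system $\mathcal{T}$ with a sided intersection, pick in a canonical (deterministic) way a vertex $v$ witnessing the intersection, say the smallest-indexed such vertex that lies on the left sides of two treks (with a tie-break rule preferring, e.g., left sides over right sides and using the vertex ordering), and then "swap tails" of the two offending treks at $v$. Because treks have a left part (a directed path going into the common source, read backwards) and a right part, and because $v$ lies on the left sides of two treks $\tau, \tau'$, one can cut both treks at $v$ and reconnect the piece of $\tau$ from $v$ onward (toward its right endpoint in $B$) with the piece of $\tau'$ up to $v$, and vice versa; this produces a new trek system $\mathcal{T}'$ on the same vertex sets, with the same multiset of edges and bidirected edges used, hence the same monomial $\prod \sigma(\tau)$, but with the induced permutation changed by a transposition, so $(-1)^{\mathcal{T}'} = -(-1)^{\mathcal{T}}$. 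The subtle points to get right are: (a) the swap must be an involution, i.e. applying it to $\mathcal{T}'$ returns $\mathcal{T}$ — this is why the choice of witnessing vertex $v$ and the offending pair must be canonical and invariant under the swap, which requires care since swapping can change which vertices are "first" intersection points; (b) one must check that after the swap the two new paths are still legitimate treks (the left/right structure is preserved because we only recombine a left-portion with a right-portion across a shared node, and acyclicity guarantees the pieces don't create cycles); and (c) the monomial is genuinely unchanged, which follows because $\sigma(\tau)$ is a product over the edges and the single bidirected/top edge of $\tau$, and the swap merely redistributes these same factors between two treks.

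Once the involution is in place, every system-with-sided-intersection term cancels against its partner, leaving exactly $\sum_{\mathcal{T}\text{ no sided int.}} (-1)^{\mathcal{T}} \prod_{\tau\in\mathcal{T}}\sigma(\tau) = \mathcal{P}_{A,B}$, completing the proof. I would note explicitly where acyclicity is used: it guarantees the trek-rule sums are finite polynomials (so the rearrangement of sums is valid) and that the recombined paths in the involution cannot fail to be treks by inadvertently closing a directed cycle. If a cleaner route is preferred, one can alternatively cite that this is exactly the "path-expansion of a determinant" in the spirit of the Lindström–Gessel–Viennot lemma adapted to treks, but I would still include the involution argument since the trek structure (with its two-sided shape and bidirected bridge) is not literally the LGV setup and the reader will want to see that the cancellation goes through.
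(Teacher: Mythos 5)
Your plan is sound, but it takes a genuinely different route from the one in the paper. The paper does not prove this statement directly (it is cited from [DST]); instead it proves the generalization in Theorem~\ref{thm:main}(ii), whose proof in Appendix~\ref{sec:restricted-treks-appendix} starts from the factorization $\Sigma=(I-\Lambda)^{-T}\Omega(I-\Lambda)^{-1}$, applies the Cauchy--Binet theorem to write $\det\Sigma_{A,B}$ as a sum over ``middle'' index sets $S,R$ of products $\det(((I-\Lambda)^{-T})_{A,S})\det(\Omega_{S,R})\det(((I-\Lambda)^{-1})_{R,B})$, and then invokes the Gessel--Viennot--Lindstr\"om lemma separately on the two $(I-\Lambda)^{-1}$ minors; a trek system without sided intersection is then assembled from a non-intersecting left path system, a permutation matching the tops through $\Omega$, and a non-intersecting right path system. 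That route outsources all cancellation to the classical LGV lemma applied to ordinary directed path systems, so no new involution needs to be designed. Your route --- Leibniz expansion of $\det\Sigma_{A,B}$, entrywise substitution of the trek rule, and a sign-reversing, monomial-preserving tail-swap involution on trek systems with a sided intersection --- is more direct and self-contained, and the swap at a shared left-side (or right-side) vertex does change the induced bijection by a transposition while preserving the edge multiset and the $\omega$-factors, so the core cancellation is correct. The price is exactly the point you flag as (a): making the choice of offending pair and witnessing vertex canonical and invariant under the swap is more delicate for treks than for plain path systems, since intersections can occur on either side, the two sides are traversed in opposite directions, and the top vertex belongs to both sides; this bookkeeping must actually be carried out for the argument to be complete, whereas the Cauchy--Binet/LGV route avoids it entirely. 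Either approach establishes the theorem for acyclic mixed graphs (in your expansion, note that $\Omega$ need not be diagonal, so the trek rule already supplies the bidirected bridges; in the paper's route the mixed case is handled either by expanding $\det(\Omega_{S,R})$ over permutations or by the bidirected-subdivision reduction of Proposition~\ref{prop:3.9}).
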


This result admits a generalization to the case where the
graph $G$ contains directed cycles.  Indeed, \cite{DST} give a
rational expression for the determinant of $\Sigma_{A,B}$ in terms of
self-avoiding trek flows, which reduce to trek systems without sided
intersection in the acyclic case.  As this generalization is more
involved, we will not give any details here and focus instead on acyclic
graphs only.

We now extend the combinatorial characterization of determinants and
the trek separation result from Theorem~\ref{thm:trek_separation} to allow for {\em restricted} treks.
% , for which it
% is important to distinguish between the left and the right side of a
% trek.

\begin{definition}\label{dfn:restricted_treks} Let $A$, $B$, $P$, and $Q$
  be subsets of vertices of a mixed graph $G$.  A {\em
    $(P,Q)$-restricted trek} between $A$ and $B$ is a trek between a
  vertex in $A$ and a vertex in $B$ that has its left side in $P$ and
  its right side in $Q$.  Let $S_L$ and $S_R$ be two further subsets
  of vertices.  Then $A$ and $B$ are {\em $(P,Q)$-restricted trek-separated} by $(S_L, S_R)$ if every $(P, Q)$-restricted trek
  between $A$ and $B$ intersects $S_L$ on its left side or $S_R$ on its right side.
\end{definition}

\begin{example}\label{ex:verma1} Consider the Verma graph from
  Figure~\ref{fig:iv-verma-ommen-mixed}(b).  Take $A = \{2,4\}$,
  $B = \{2,3\}$, $P = \{2,4\}$, and $Q = \{2,3,4\}$.  Then $A$ and
  $B$ are $(P,Q)$-restricted trek-separated by $(\{\}, \{2\})$.
  Indeed, every trek between $A$ and $B$ that only uses $P$ on the
  left and only uses $Q$ on the right has to go through $2$ on the
  right. Note, however, that this is not true if, for example,
  $P = Q = V$ or if $3\in P$.
\end{example}

The main observation of this section is that restricted trek
separation is equivalent to a rank constraint on a special matrix.
Note also that part (ii) of the theorem is a direct generalization of 
Theorem~\ref{TrekSystems} to the restricted case.

\begin{theorem}\label{thm:main}
  Let $G=(V,\mathcal{D},\mathcal{B})$ be an acyclic mixed graph, and
  let $\Lambda\in\mathbb{R}^\mathcal{D}$ and
  $\Omega\in\mathit{PD}(\mathcal{B})$.  For $P,Q\subseteq V$, consider
  the matrix
  \[
    \Sigma^{(P,Q)} = \left[(I-\Lambda)_{P, P}\right]^{-T}
    \Omega_{P, Q}\left[(I-\Lambda)_{Q, Q}\right]^{-1},
  \]
  and its submatrix $\Sigma^{(P,Q)}_{A,B}$ for a choice of row indices
  $A\subseteq
  P$ and column indices $B\subseteq Q$.
  \begin{enumerate}
  \item[(i)] The rank of  $\Sigma^{(P,Q)}_{A,B}$ is at most
    $$ %\text{rank}(\Sigma^{(P,Q)}_{A,B})\leq
    % \min\{|S_L| + |S_R|:  (S_L, S_R) \; (P, Q)\text{-restricted
    %   trek-separate } A \text{ and } B\},
        \min\{|S_L| + |S_R|:   A
        \text{ and } B \text{ are } \; (P, Q)\text{-restricted trek-separated by } (S_L,S_R)\},
    $$
    and equal to this minimum generically.
  \item[(ii)] 
    If $|A|=|B|$, then the determinant of
    $\Sigma^{(P,Q)}_{A,B}$ is equal to
    \begin{equation*}
      \label{eq:PACPQ}
      \mathcal P_{A, B,(P,Q)} \;=\; \sum (-1)^\mathcal{T}
      \prod_{\tau\in\mathcal{T}} \sigma(\tau)
    \end{equation*}
    where the summation runs over all systems of treks $\mathcal{T}$
    that comprise only $(P,Q)$-restricted treks from $A$ to $B$ and have
    no sided intersection.
  \end{enumerate}
% %Then, if $A\subseteq P, B\subseteq Q,$ and $\#A = \#B$, $\det(N_{A, B})$ is the sum over all $(P, Q)$-restricted trek systems between $A$ and $B$ with no sided intersection.
% The the rank of its submatrix with rows $A\subseteq P$ and columns $B\subseteq Q$ satisfies
% $$\text{rank}(\Sigma^{(P,Q)}_{A,B})\leq \min\{\# S_L + \# S_R: (S_L, S_R)\, (P, Q)\text{-restricted } \text{trek separates } A \text{ from }B\},$$
% and equality holds for generic $\Sigma^{(P,Q)}$ consistent with the graph $G$.
\end{theorem}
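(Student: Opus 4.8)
The plan is to reduce Theorem~\ref{thm:main} to the unrestricted trek-separation statements (Theorems~\ref{thm:trek_separation} and~\ref{TrekSystems}) by recognizing $\Sigma^{(P,Q)}$ as an honest covariance matrix of a structural equation model over a modified graph. Concretely, observe that $(I-\Lambda)_{P,P}$ is the ``$I-\Lambda$'' matrix of the induced subgraph $G[P]$ — it is invertible since $G$ is acyclic, so it is strictly upper-triangular in a topological order — and similarly for $(I-\Lambda)_{Q,Q}$ and $G[Q]$. The matrix $\Omega_{P,Q}$ is in general not symmetric, so $\Sigma^{(P,Q)}$ is not itself a covariance matrix; but its entries still obey a trek-rule-type expansion. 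The cleanest route is to expand directly: writing $(I-\Lambda)_{P,P}^{-1} = \sum_{k\ge 0}(\Lambda_{P,P})^k$ and likewise for $Q$, the $(a,b)$ entry of $\Sigma^{(P,Q)}$ is
\[
  \Sigma^{(P,Q)}_{ab} \;=\; \sum_{\ell,r\ge 0}\;\sum \lambda_{i_\ell i_{\ell-1}}\cdots\lambda_{i_2 i_1}\,\omega_{i_1 j_1}\,\lambda_{j_1 j_2}\cdots\lambda_{j_{r-1}j_r},
\]
where the inner sum is over directed paths $a = i_\ell \leftarrow\cdots\leftarrow i_1$ lying entirely in $P$ and directed paths $j_1\to\cdots\to j_r = b$ lying entirely in $Q$, joined by a bidirected edge $i_1 \leftrightarrow j_1$ (or $i_1 = j_1$). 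That is exactly the sum of trek monomials over $(P,Q)$-restricted treks from $a$ to $b$. So
\[
  \Sigma^{(P,Q)}_{ab} \;=\; \sum_{\tau:\ a\to b,\ (P,Q)\text{-restricted}} \sigma(\tau),
\]
the restricted analogue of the trek rule~\eqref{eq:trek-rule}.

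Given this, part (ii) follows by repeating the Lindström–Gessel–Viennot-style argument behind Theorem~\ref{TrekSystems}: expand $\det \Sigma^{(P,Q)}_{A,B} = \sum_{\pi} \operatorname{sgn}(\pi)\prod_{a\in A}\Sigma^{(P,Q)}_{a,\pi(a)}$, substitute the restricted trek-rule expansion of each entry, and distribute to get a signed sum over families of $(P,Q)$-restricted treks, one from each $a\in A$ to a distinct element of $B$. The families with a sided intersection cancel in pairs via the standard sign-reversing involution (swap the tails of two treks sharing a left-side vertex, or a right-side vertex, at the first such vertex) — the key point being that the swap operation stays within the class of $(P,Q)$-restricted treks, since it only reuses vertices already appearing on $P$-side or $Q$-side pieces. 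What survives is precisely $\mathcal{P}_{A,B,(P,Q)}$. One must handle the $\ell = 0$ or $r = 0$ degenerate treks and the $i_1 = j_1$ case, and be careful that the involution is well-defined when both a left-side and a right-side collision occur (fix a priority, e.g., always resolve left-side collisions first), exactly as in~\cite{DST}.

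For part (i), the argument mirrors the proof of Theorem~\ref{thm:trek_separation}. The upper bound: if $(S_L,S_R)$ $(P,Q)$-restricted trek-separates $A$ from $B$, then every nonzero term in the expansion of any $(|S_L|+|S_R|+1)$-minor of $\Sigma^{(P,Q)}_{A,B}$ is a signed product over a family of $(P,Q)$-restricted treks with no sided intersection connecting some $(|S_L|+|S_R|+1)$-subset of $A$ to one of $B$; by a Menger/König argument each such family must use at least $|S_L|+|S_R|+1$ vertex-disjoint ``sides'', forcing one trek to avoid both $S_L$ on the left and $S_R$ on the right — contradicting separation — so every such minor vanishes identically in $(\Lambda,\Omega)$, hence the rank is at most $|S_L|+|S_R|$. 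For the generic equality, one shows that when no separating pair of size $< r$ exists, there is a family of $r$ $(P,Q)$-restricted treks with no sided intersection connecting an $r$-subset of $A$ to an $r$-subset of $B$ (again by the max-flow/min-cut duality, applied to the auxiliary flow network built from $P$ and $Q$ — precisely the restricted analogue of the Gessel–Viennot network in~\cite{STD}), and the corresponding $r\times r$ minor is a nonzero polynomial in the parameters because the leading monomial of one trek system cannot be cancelled. The main obstacle is setting up this flow network correctly in the restricted setting so that min-cuts correspond exactly to pairs $(S_L,S_R)$ with the left cut confined to $P$ and the right cut confined to $Q$; once the network is right, the combinatorial duality and the generic-nonvanishing argument go through as in~\cite{STD} with only notational changes.
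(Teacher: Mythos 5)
Your overall strategy coincides with the paper's (Appendix~\ref{sec:restricted-treks-appendix}): a restricted trek rule for the entries of $\Sigma^{(P,Q)}$, a Gessel--Viennot--Lindstr\"om cancellation for the determinant expansion, and Menger's theorem on an auxiliary network for the rank/separation duality. Two differences are worth recording. For part (ii) you cancel sided intersections by a sign-reversing involution acting directly on systems of treks, whereas the paper first applies Cauchy--Binet to split $\det\Sigma^{(P,Q)}_{A,B}$ into products of minors of $((I-\Lambda)_{P,P})^{-1}$ and $((I-\Lambda)_{Q,Q})^{-1}$ and then invokes the classical path-system Lemma~\ref{lem:GesselViennot} on each directed side separately; your route avoids Cauchy--Binet but concentrates all the delicacy in the well-definedness of the tail-swapping involution (you correctly flag the priority rule needed when a left and a right collision coexist), while the paper's route gets the cancellation for free at the cost of needing $\Omega$ diagonal for the decoupling in Lemma~\ref{lem:3.2} --- which is precisely why the paper proves everything for DAGs first. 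That points to the one genuinely thin spot in your sketch: part (i) for a general acyclic \emph{mixed} graph is not disposed of by ``setting up the network correctly.'' The paper passes to the canonical DAG obtained by subdividing each bidirected edge $i\bi j$ into $i\leftarrow v_{i,j}\to j$, and Proposition~\ref{prop:3.9} must then show both that minimal restricted trek-separating sets transfer back from the subdivision to $G$ (a patching argument on treks through the new vertices $v_{i,j}$) and that the generic rank is unchanged; the latter is not automatic, since the subdivided graph's parametrization maps into but not onto that of $G$, and the paper establishes equality of Zariski closures by a local argument near $(\Lambda,\Omega)=(0,I)$. Finally, your ``leading monomial cannot be cancelled'' claim for generic nonvanishing needs the observation that a trek system with pairwise disjoint left sides and pairwise disjoint right sides is recoverable from its monomial, so distinct surviving systems contribute distinct monomials. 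None of this breaks your plan, but these are the steps where the substantive work of the paper's proof actually lives.
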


The proof of Theorem~\ref{thm:main} is located in
Appendix~\ref{sec:restricted-treks-appendix}.  It is analogous to the
proofs of Theorems~\ref{thm:trek_separation} and~\ref{TrekSystems} as
developed in~\cite{STD,DST}.   

\begin{example}\label{ex:verma2} Consider once more the Verma graph
  from Figure~\ref{fig:iv-verma-ommen-mixed}(b).  Let $A = \{2,4\}$,
  $B = \{2,3\}$, $P = \{2,4\}$, and $Q = \{2,3,4\}$. We saw in
  Example~\ref{ex:verma1} that $A$ and $B$ are $(P, Q)$-restricted
  trek-separated 
  by $(\{\}, \{2\})$.  Now consider the matrix 
  \begin{align*}
    \Sigma^{(P,Q)} &\;=\;\left[ (I-\Lambda)_{24, 24}\right]^{-T} \Omega_{24,
    234} \left[ (I-\Lambda)_{234, 234}\right]^{-1}\\
    &\;=\;\begin{pmatrix}\omega_{22} & \omega_{22}\lambda_{23} & \omega_{22}\lambda_{23}\lambda_{34} + \omega_{24}\\
\omega_{24} & \omega_{24}\lambda_{23} & \omega_{24}\lambda_{23}\lambda_{34} + \omega_{44}
\end{pmatrix}.
  \end{align*}
  As predicted by Theorem~\ref{thm:main}(i), the submatrix
  $\Sigma^{(P,Q)}_{A,B} =\Sigma^{(P,Q)}_{24,23}$ has rank $1$.
\end{example}
\smallskip

%%% Local Variables:
%%% TeX-master: "nested_dets"
%%% End:

\section{Nested determinants}
\label{sec:nested-determinants}

In this section we demonstrate how restricted trek separation may lead to
polynomial equations in the vanishing ideal $\mathcal{I}(G)$ of the
model $\mathcal M(G)$ of an acyclic mixed graph $G$. These equations are in general not
determinantal, instead they are given by specific types of nested
determinants. In Section~\ref{sec:5.1} we introduce a \emph{swapping
  property} based on which in Theorem~\ref{thm:one_nested} we show how restricted trek separation
gives rise to the vanishing of such nested determinants.  In
Section~\ref{sec:5.2}, we show how the swapping property and Theorem~\ref{thm:one_nested} are sufficient to explain
the vanishing of the parentally nested determinants from Proposition~\ref{prop:ancestral-nested-det} and
Theorem~\ref{thm:cut-ancestral} in terms of restricted trek separation.

\subsection{Restricted trek separation and nested determinants}\label{sec:5.1}

We begin by defining a {\em swapping property} that allows us to factor certain subdeterminants of $\Sigma$. Recall that $\uplus$ denotes disjoint union (of multisets).
\begin{definition}\label{def:swapping} Let $A_1,\ldots, A_k,
  B_1,\ldots, B_k$ be sets of vertices of an acyclic mixed graph $G$
  with $|A_i| = |B_i|$ for every $i=1,\ldots, k$. Suppose {\em
    every} system of treks without sided intersection between
  $A_1\uplus\cdots\uplus A_k$ and $B_1\uplus\cdots\uplus B_k$ connects
  $A_i$ to $B_i$ for every $i$. Moreover, suppose that for any two
  trek systems $\mathcal T_1$ and $\mathcal T_2$ without sided
  intersection between $A_1\uplus\cdots\uplus A_k$ and
  $B_1\uplus\cdots\uplus B_k$ if we swap the treks between $A_i$ and
  $B_i$ from $\mathcal T_1$ with those from $\mathcal T_2$, then we
  obtain another two trek systems with no sided intersection between
  $A_1\uplus\cdots\uplus A_k$ and $B_1\uplus\cdots\uplus B_k$. Then,
  we say that $(A_1, B_1), \ldots,(A_k,B_k)$ satisfy the {\em swapping
    property}.
\end{definition}

The next lemma, which is proven in
Appendix~\ref{app:pf_lem_swapping}, shows how the swapping property
gives factorizations of subdeterminants of $\Sigma$ into different systems
of trek monomial sums.
\begin{lemma}\label{lem:swapping} Assume that $(A_1, B_1), \ldots,(A_k,B_k)$ satisfy the swapping property. Then,
$$|\Sigma_{A_1\uplus\cdots\uplus A_k, B_1\uplus\cdots\uplus B_k}| = \prod_{i=1}^k\mathcal P_{A_i, B_i, (C_i, D_i)},$$
where $C_1,\ldots, C_k, D_1,\ldots, D_k$ are sets of vertices determined
by the trek systems without sided intersection between
$A_1\uplus\cdots\uplus A_k$ and $B_1\uplus\cdots\uplus B_k$.
\end{lemma}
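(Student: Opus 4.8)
The plan is to expand the left-hand determinant $|\Sigma_{A_1\uplus\cdots\uplus A_k,\, B_1\uplus\cdots\uplus B_k}|$ using the trek-system expansion of Theorem~\ref{TrekSystems}: it equals $\sum_{\mathcal T}(-1)^{\mathcal T}\prod_{\tau\in\mathcal T}\sigma(\tau)$, the sum over all trek systems $\mathcal T$ without sided intersection between $A_1\uplus\cdots\uplus A_k$ and $B_1\uplus\cdots\uplus B_k$. The first hypothesis of the swapping property says that every such $\mathcal T$ decomposes as a disjoint union $\mathcal T = \mathcal T^{(1)}\sqcup\cdots\sqcup\mathcal T^{(k)}$, where $\mathcal T^{(i)}$ is a trek system without sided intersection between $A_i$ and $B_i$. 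So I would first set up a bijection between the index set of $\mathcal T$'s and the product over $i$ of the index sets of the $\mathcal T^{(i)}$'s. For this I need to verify both directions: any valid $\mathcal T$ restricts to valid pieces $\mathcal T^{(i)}$ (clear, since a subsystem of a system with no sided intersection inherits that property, and the first hypothesis pins down which $A_j$ each trek connects to which $B_j$), and conversely, the swapping hypothesis — applied iteratively, swapping one block at a time — shows that gluing together arbitrary valid pieces $\mathcal T^{(i)}$ always yields a valid $\mathcal T$. This second direction is exactly what the swapping property buys us: without it, combining independently chosen subsystems could create a sided intersection across blocks.

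Next I would check that the summand factorizes correctly along this bijection. The monomial part is immediate: $\prod_{\tau\in\mathcal T}\sigma(\tau) = \prod_{i=1}^k\prod_{\tau\in\mathcal T^{(i)}}\sigma(\tau)$. The sign requires a short argument: since every $\mathcal T$ connects $A_i$ to $B_i$ for each $i$ (first hypothesis), the permutation of $B_1\uplus\cdots\uplus B_k$ induced by $\mathcal T$ is block-diagonal with respect to the partition $B_1\uplus\cdots\uplus B_k$ — it maps $B_i$ to $B_i$ — so its sign is the product of the signs of the $k$ block permutations, i.e.\ $(-1)^{\mathcal T} = \prod_{i=1}^k (-1)^{\mathcal T^{(i)}}$, where the ordering conventions on $A_i$ and $B_i$ are the ones inherited from the global ordering on $A_1\uplus\cdots\uplus A_k$ and $B_1\uplus\cdots\uplus B_k$. (One must be a little careful that interleaving of the blocks in the ambient order does not introduce an extra sign; because each block maps to itself, the permutation matrix is genuinely block diagonal after simultaneously reordering rows and columns, so no extra sign appears.)

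Combining these two points and using distributivity of the sum over a product index set, I would conclude
\[
  |\Sigma_{A_1\uplus\cdots\uplus A_k,\, B_1\uplus\cdots\uplus B_k}|
  \;=\; \sum_{\mathcal T^{(1)},\ldots,\mathcal T^{(k)}} \prod_{i=1}^k (-1)^{\mathcal T^{(i)}}\prod_{\tau\in\mathcal T^{(i)}}\sigma(\tau)
  \;=\; \prod_{i=1}^k \Bigl( \sum_{\mathcal T^{(i)}} (-1)^{\mathcal T^{(i)}}\prod_{\tau\in\mathcal T^{(i)}}\sigma(\tau) \Bigr).
\]
It then remains to identify each inner sum with $\mathcal P_{A_i,B_i,(C_i,D_i)}$ for appropriate restriction sets. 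Here I would define $C_i$ (resp.\ $D_i$) to be the union of the left sides (resp.\ right sides) of all treks appearing in trek systems without sided intersection between the full unions $A_1\uplus\cdots\uplus A_k$ and $B_1\uplus\cdots\uplus B_k$ that connect $A_i$ to $B_i$ — equivalently, since every such subsystem extends to a full system by the swapping property, $C_i$ and $D_i$ collect exactly the vertices that can occur on the left/right of a trek in some valid $\mathcal T^{(i)}$. With that choice, the trek systems between $A_i$ and $B_i$ that are $(C_i,D_i)$-restricted and have no sided intersection are precisely the $\mathcal T^{(i)}$ that arise as a block of some global $\mathcal T$, so the inner sum is exactly $\mathcal P_{A_i,B_i,(C_i,D_i)}$ by definition~\eqref{eq:PAC} of that quantity in the restricted setting. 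The main obstacle I anticipate is this last identification — making sure that the restricted trek systems counted by $\mathcal P_{A_i,B_i,(C_i,D_i)}$ coincide, with no spurious extra systems and none missing, with the set of blocks realizable inside a global no-sided-intersection system; the swapping property is precisely the tool that closes this gap, but it has to be invoked carefully (iterating over the $k$ blocks) rather than in one stroke.
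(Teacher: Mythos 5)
Your proposal is correct and follows essentially the same route as the paper's proof: expand the determinant as a sum over trek systems without sided intersection via Theorem~\ref{TrekSystems}, use the swapping property to identify that sum with the Cartesian product of the sets of per-block systems, and package each resulting factor as a restricted $\mathcal P_{A_i,B_i,(C_i,D_i)}$. The only (cosmetic) differences are that the paper separately disposes of the degenerate case $A_i\cap A_j\neq\emptyset$ (repeated rows, both sides zero) and defines $C_i,D_i$ as complements of the vertices used on the left/right by the \emph{other} blocks rather than as unions over the $i$-th block, while you are more explicit about the sign bookkeeping, which the paper leaves implicit.
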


We now proceed to the main result of this section, Theorem~\ref{thm:one_nested}, which shows that
the swapping property for suitable sets of vertices implies that
certain nested determinants can be factored into sums of restricted
trek monomial systems. Later in the section we will see that though the conditions
of this theorem appear to be quite special, they are very natural. In particular,
they apply to a multitude of examples, and moreover, the theorem generalizes
our results from Proposition~\ref{prop:ancestral-nested-det} and Theorem~\ref{thm:cut-ancestral}.

\begin{theorem}\label{thm:one_nested} Let $a_1,\dots, a_n, b_1, \dots,
  b_n\in V$, and let $A_1,\dots, A_n$, $B_1,\dots, B_n$,
  $ C_1,\ldots, C_n$, $D_1,\ldots, D_n\subseteq V$ be such that
  $|A_i| = |B_i|$, $|C_j| = |D_j|$ for all $i, j$. Assume further that
  for each $i,j$, the sets $(A_i, B_i)$, $(C_j, D_j)$, and
  $(\{a_i\}, \{b_j\})$ satisfy the swapping property such that
  $$|\Sigma_{A_i\uplus C_j\uplus \{a_i\}, B_i\uplus D_j\uplus
    \{b_j\}}| = \mathcal P_{A_i, B_i, (P_i, Q_i)}\mathcal P_{C_j, D_j,
    (R_j, S_j)}\mathcal P_{a_i, b_j, (E_{ij}, F_{ij})}$$ for sets
  of vertices $P_i, Q_i, R_j, S_j, E_{ij}, F_{ij}$ and every $i$ and
  $j$. Assume also that
\begin{align}\label{eq:E_ijF_ijEF}
\left|\big(\mathcal P_{a_i, b_j, (E_{ij}, F_{ij})}\big)_{1\le i,j\le n}\right| = \mathcal P_{\{a_1,\ldots, a_n\}, \{b_1,\dots, b_n\}, (E, F)}
\end{align}
for some $E, F$. Then,
\begin{align*}
&\left|\big(|\Sigma_{A_i\uplus C_j\uplus \{a_i\}, B_i\uplus D_j\uplus \{b_j\}}|\big)_{1\leq i,j\leq n}\right|=\quad\quad\quad\quad\quad\quad\quad\quad\quad\quad\quad\quad\quad\quad \\
&= \left(\prod_i\mathcal P_{A_i,B_i, (P_i, Q_i)}\right) \left(\prod_j\mathcal P_{C_j, D_j, (R_j, S_j)}\right)\mathcal P_{\{a_1,\dots, a_n\}, \{b_1,\dots, b_n\}, (E, F)}.
\end{align*}
% the sum of the product of trek monomials of all trek systems between $\{a_1,\dots, a_n\}$ and $\{b_1,\dots, b_n\}$ with no sided intersection 
%%such that any trek between $a_i$ and $b_j$ appearing in such a system can be part of a trek system between $A_i\cup a_i$ and $B_i\cup b_j$, of the corresponding trek monomials, 
%that only use vertices from $E$ on the left and from $F$ on the right (i.e. $\mathcal P_{\{a_1,\dots, a_n\}, \{b_1,\dots, b_n\}, (E, F)})$,
%multiplied by the trek monomials $\prod_i\mathcal P_{A_i,B_i, (C_i, D_i)}$. %[need some more conditions for this to be true].
\end{theorem}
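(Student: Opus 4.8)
The plan is to view the $n\times n$ matrix of nested determinants as a product of two diagonal matrices with an ``interior'' matrix in the middle, and then to conclude by multiplicativity of the determinant together with the two standing hypotheses. Throughout we work under the substitution $\Sigma=(I-\Lambda)^{-T}\Omega(I-\Lambda)^{-1}$, so that every quantity appearing is a polynomial in the entries of $\Lambda$ and $\Omega$, as in Theorems~\ref{thm:trek_separation}--\ref{TrekSystems} and Lemma~\ref{lem:swapping}.

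Set $M=\bigl(|\Sigma_{A_i\uplus C_j\uplus\{a_i\},\,B_i\uplus D_j\uplus\{b_j\}}|\bigr)_{1\le i,j\le n}$, the matrix whose determinant we must compute. For a fixed pair $(i,j)$, the assumption that $(A_i,B_i)$, $(C_j,D_j)$, and $(\{a_i\},\{b_j\})$ satisfy the swapping property is exactly what Lemma~\ref{lem:swapping} requires, applied with $k=3$ to these three pairs, and it produces the factorization
\[
  M_{ij}\;=\;\mathcal P_{A_i,B_i,(P_i,Q_i)}\cdot\mathcal P_{C_j,D_j,(R_j,S_j)}\cdot\mathcal P_{a_i,b_j,(E_{ij},F_{ij})},
\]
which is precisely the displayed hypothesis of the theorem. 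The feature that makes the argument work — and which is part of the hypothesis — is that the restriction sets of the first factor depend only on the row index $i$ and those of the second only on the column index $j$. Consequently $M$ factors as $M=D_\alpha\,\Gamma\,D_\beta$, where $D_\alpha=\diag\bigl(\mathcal P_{A_1,B_1,(P_1,Q_1)},\dots,\mathcal P_{A_n,B_n,(P_n,Q_n)}\bigr)$ and $D_\beta=\diag\bigl(\mathcal P_{C_1,D_1,(R_1,S_1)},\dots,\mathcal P_{C_n,D_n,(R_n,S_n)}\bigr)$ are diagonal and $\Gamma=\bigl(\mathcal P_{a_i,b_j,(E_{ij},F_{ij})}\bigr)_{1\le i,j\le n}$.

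Multiplicativity of the determinant then gives
\[
  \det M\;=\;\det D_\alpha\cdot\det\Gamma\cdot\det D_\beta\;=\;\Bigl(\prod_{i=1}^n\mathcal P_{A_i,B_i,(P_i,Q_i)}\Bigr)\,\det\Gamma\,\Bigl(\prod_{j=1}^n\mathcal P_{C_j,D_j,(R_j,S_j)}\Bigr),
\]
and substituting the second hypothesis $\det\Gamma=\mathcal P_{\{a_1,\dots,a_n\},\{b_1,\dots,b_n\},(E,F)}$ from~\eqref{eq:E_ijF_ijEF} yields the asserted identity. In this argument all the substance is carried by Lemma~\ref{lem:swapping} (proved in Appendix~\ref{app:pf_lem_swapping}) and by the two hypotheses; the only point requiring care is that the sign conventions defining the various $\mathcal P$'s — signed sums over trek systems with no sided intersection, weighted by permutation signs — are used consistently across all entries, and this is automatic once $M$ is presented in the factored form $D_\alpha\Gamma D_\beta$, since no manual sign bookkeeping is then needed. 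Hence I expect no genuine obstacle within the proof of this theorem itself; the real work lies in verifying, for concrete graphs, that the swapping property and the determinant identity~\eqref{eq:E_ijF_ijEF} hold, which is taken up in the following subsection.
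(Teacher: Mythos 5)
Your proposal is correct and follows essentially the same route as the paper's proof: the paper factors $\mathcal P_{A_i,B_i,(P_i,Q_i)}$ out of each row and $\mathcal P_{C_j,D_j,(R_j,S_j)}$ out of each column of $M$ and then invokes~\eqref{eq:E_ijF_ijEF}, which is exactly your decomposition $M=D_\alpha\Gamma D_\beta$ phrased without the diagonal-matrix notation. No gaps.
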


\begin{proof} Let $M$ be the matrix with $i,j$-th entry equal to
  $|\Sigma_{A_i\uplus C_j\uplus \{a_i\}, B_i\uplus D_j\uplus
    \{b_j\}}|$ for $1\leq i,j\leq n$. Since the entries in the $i$-th
  row of $M$ are divisible by $\mathcal P_{A_i, B_i, (P_i, Q_i)}$, we
  can factor the determinant of $M$ as
$$|M| = \left(\prod_{i=1}^n \mathcal P_{A_i, B_i, (P_i, Q_i)} \right) \det ((\mathcal P_{C_j, D_j, (R_j, S_j)}\mathcal P_{a_i, b_j, (E_{ij}, F_{ij})})_{1\leq i,j\leq n}).$$
Since the $j$-th column is divisible by $\mathcal P_{C_j, D_j, (R_j,
  S_j)}$, we can further factor as
\begin{align}
  \nonumber
  |M| &= \left(\prod_i\mathcal P_{A_i,B_i, (P_i, Q_i)}\right) \left(\prod_j\mathcal P_{C_j, D_j, (R_j, S_j)}\right)\det(\mathcal P_{a_i, b_j, (E_{ij}, F_{ij})})\\
  \label{eq:factorization}
      &=\left(\prod_i\mathcal P_{A_i,B_i, (P_i, Q_i)}\right)
    \left(\prod_j\mathcal P_{C_j, D_j, (R_j, S_j)}\right)\mathcal
    P_{\{a_1,\dots, a_n\}, \{b_1,\dots, b_n\}, (E, F)}
\end{align}
as required.
\end{proof}

Condition~\eqref{eq:E_ijF_ijEF} deserves further
discussion. % In Lemma~\ref{lem:E_ijF_ij}, we show that it holds for many choices of sets $E_{ij}$ and $F_{ij}$.
By Theorem~\ref{thm:main}(ii),
\[
  \left|\big(\mathcal P_{a_i, b_j, (E, F)}\big)_{i,j}\right| = \mathcal P_{\{a_1,\ldots,
    a_n\}, \{b_1,\dots, b_n\}, (E, F)}
\] for every $a_1,\dots, a_n, b_1, \dots, b_n\in V$ and
$E, F\subseteq V$. However, for this equality, it is not necessary to
have the exact sets $E$ and $F$ for all matrix entries on the
left-hand side.  Instead, slightly different sets $E_{ij}$ and
$F_{ij}$ can be used as the following lemma indicates.

\begin{lemma}\label{lem:E_ijF_ij} Let $a_1,\dots, a_n, b_1, \dots, b_n\in V$ and $E, F\subseteq V$. Then,
\begin{align}\label{eq:E_ijF_ij}
\left|\big(\mathcal P_{a_i, b_j, (E_{ij}, F_{ij})}\big)_{i,j}\right| = \mathcal P_{\{a_1,\ldots, a_n\}, \{b_1,\dots, b_n\}, (E, F)}
\end{align}
if
$$E\setminus\{a_k: k\neq i\} \subseteq E_{ij}\subseteq E\cup(V\setminus\an(a_i))\,\,\text{ and }\,\, F\setminus\{b_\ell: \ell\neq j\}\subseteq F_{ij}\subseteq F\cup(V\setminus\an(b_j)).$$
\end{lemma}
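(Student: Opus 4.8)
The plan is to prove \eqref{eq:E_ijF_ij} by writing both sides as signed sums of products of trek monomials and matching them term by term. By Theorem~\ref{thm:main}(ii) applied to a singleton pair $(\{a_i\},\{b_j\})$, the entry $\mathcal P_{a_i,b_j,(E_{ij},F_{ij})}$ is exactly $\sum_\tau \sigma(\tau)$, the sum running over all treks $\tau$ from $a_i$ to $b_j$ whose left side is contained in $E_{ij}$ and whose right side is contained in $F_{ij}$. Expanding the $n\times n$ determinant on the left of \eqref{eq:E_ijF_ij} by the permutation expansion thus rewrites it as
\[
  \sum_{\pi\in S_n}\operatorname{sgn}(\pi)\ \sum_{(\tau_1,\dots,\tau_n)}\ \prod_{i=1}^n\sigma(\tau_i),
\]
where the inner sum ranges over all tuples of treks in which $\tau_i$ goes from $a_i$ to $b_{\pi(i)}$ and is $(E_{i\pi(i)},F_{i\pi(i)})$-restricted. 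Applying Theorem~\ref{thm:main}(ii) instead to $\Sigma^{(E,F)}_{\{a_1,\dots,a_n\},\{b_1,\dots,b_n\}}$ and expanding that determinant expresses the right-hand side of \eqref{eq:E_ijF_ij} as the same double sum, but with each $\tau_i$ required only to be $(E,F)$-restricted. (As a consistency check, $E_{ij}=E$, $F_{ij}=F$ satisfies the hypotheses, and for that choice the two sums are literally identical.) It therefore suffices to prove the two signed sums agree.

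I would next record how the hypotheses constrain the treks involved. The left side of a trek from $a_i$ is a directed path terminating at $a_i$, hence is contained in $\an(a_i)$; symmetrically the right side of a trek into $b_j$ lies in $\an(b_j)$. Combined with $E_{ij}\subseteq E\cup(V\setminus\an(a_i))$ and $F_{ij}\subseteq F\cup(V\setminus\an(b_j))$, this shows that every $(E_{ij},F_{ij})$-restricted trek from $a_i$ to $b_j$ is automatically $(E,F)$-restricted, so every tuple counted on the left of \eqref{eq:E_ijF_ij} is also counted on the right. For tuples \emph{without a sided intersection} the converse holds too: the left side of $\tau_i$ then contains no $a_k$ with $k\ne i$, since $a_k$ lies on the left side of $\tau_k$ as well (every trek from $a_k$ has $a_k$ on its left side), and a shared left vertex would be a sided intersection; hence a $(E,F)$-restricted $\tau_i$ has left side in $E\setminus\{a_k:k\ne i\}\subseteq E_{i\pi(i)}$, and likewise for the right side via $F\setminus\{b_\ell:\ell\ne\pi(i)\}\subseteq F_{i\pi(i)}$. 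So the two signed sums have identical contributions from all no-sided-intersection tuples.

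It remains to show that the contributions of tuples \emph{with} a sided intersection cancel on each side. On the right-hand side this is exactly the cancellation that drives Theorem~\ref{thm:main}(ii): there is a sign-reversing involution on $(E,F)$-restricted trek tuples with a sided intersection that resolves a canonically chosen intersection by exchanging the portions of two treks lying above a shared vertex on a common side --- an operation that introduces no new vertices onto any side of any trek and fixes the endpoint multisets $\{a_1,\dots,a_n\}$ and $\{b_1,\dots,b_n\}$. The heart of the proof is to verify that this involution can be organized so that it also preserves the admissible family on the left-hand side, i.e.\ the family of tuples respecting the per-entry restrictions $(E_{ij},F_{ij})$. Here the precise bounds $E\setminus\{a_k:k\ne i\}\subseteq E_{ij}\subseteq E\cup(V\setminus\an(a_i))$ are what one leverages: a vertex appearing on a rerouted left side is an ancestor of the new source $a_i$ and already appeared on one of the two original left sides, hence lies in $E$; and if it happens to be a boundary vertex $a_k$, the rerouted tuple retains a sided intersection at $a_k$, so one is free to keep resolving --- choosing at each stage a sided intersection located at no $a_k$ and no $b_\ell$ when one exists --- without leaving the admissible family. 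Carrying out this closure argument rigorously is the main obstacle; granting it, the sided-intersection tuples contribute $0$ to each side, and \eqref{eq:E_ijF_ij} follows.
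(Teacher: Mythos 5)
Your reduction of the problem is correct and is in fact more careful than the paper's own argument. Expanding both sides as signed sums over tuples of treks, using the upper bounds $E_{ij}\subseteq E\cup(V\setminus\an(a_i))$ together with the fact that the left side of a trek out of $a_i$ lies in $\an(a_i)$ to show that every per-entry-restricted tuple is $(E,F)$-restricted, and using the lower bounds $E\setminus\{a_k:k\ne i\}\subseteq E_{ij}$ plus the fact that $a_k$ lies on the left side of every trek out of $a_k$ to show that the two families coincide on tuples with no sided intersection---all of this is right, and it is essentially the entire content of the paper's four-sentence proof, which simply asserts that the determinant of the per-entry-restricted matrix equals the signed sum over non-crossing per-entry-restricted systems. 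The genuine gap is exactly where you flag it: one must show that the signed sum over per-entry-restricted tuples \emph{with} a sided intersection vanishes, and your proposed repair (iterating the tail swap while preferring intersections located away from the $a_k$ and $b_\ell$) is not a well-defined sign-reversing involution; you do not specify the pairing, why the iteration terminates, or why it never leaves the admissible family. The paper's proof silently assumes this same Gessel--Viennot-type cancellation for the per-entry family, so you have correctly located the crux, but neither argument closes it.

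Moreover, the gap is not a removable technicality: the stated sandwich conditions are not sufficient for the cancellation, so no argument using only these hypotheses can succeed. Take $V=\{1,2,3,4\}$ with directed edges $2\to1$, $2\to3$, $1\to4$ and no bidirected edges, and set $a_1=1$, $a_2=2$, $b_1=3$, $b_2=4$, $E=F=V$, $E_{12}=V\setminus\{2\}$, and all other $E_{ij}=F_{ij}=V$. These choices satisfy the hypotheses of the lemma, since $E\setminus\{a_2\}=V\setminus\{2\}$. Then $\mathcal P_{1,4,(E_{12},V)}=\omega_{11}\lambda_{14}$ (the trek $1\leftarrow2\to1\to4$ is excluded because its left side contains $2$), while the remaining entries are the covariances themselves, so the left-hand side of \eqref{eq:E_ijF_ij} equals
\begin{equation*}
  \sigma_{13}\sigma_{24}-\omega_{11}\lambda_{14}\,\sigma_{23}
  \;=\;\omega_{22}^2\lambda_{21}^2\lambda_{23}\lambda_{14}-\omega_{11}\omega_{22}\lambda_{23}\lambda_{14},
\end{equation*}
whereas the right-hand side is $\left|\Sigma_{12,34}\right|=-\omega_{11}\omega_{22}\lambda_{23}\lambda_{14}$. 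The surviving monomial comes from the admissible crossing tuple $(1\leftarrow2\to3,\;2\to1\to4)$, whose tail-swap partner $(1\leftarrow2\to1\to4,\;2\to3)$ is exactly the tuple excluded by $E_{12}$. So the per-entry family is genuinely not closed under the involution, and the identity can fail. Any correct proof must therefore invoke additional structure on the $E_{ij},F_{ij}$ beyond the stated sandwich bounds (for instance, that the sets $E_{ij}\cap\an(a_i)$ and $F_{ij}\cap\an(b_j)$ are chosen so that the tail-swap stays admissible, as happens in the paper's applications where $E_{ij}\cap\an(a_i)$ either equals $E\cap\an(a_i)$ or the singleton $\{a_i\}$ uniformly in $j$); your proof, like the paper's, does not supply such an argument.
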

The proof can be found in Appendix~\ref{app:pf_lem_E_ijF_ij}.
We remark that additional choices of $E_{ij}$ and $F_{ij}$ are certainly possible depending on the graph structure at hand. Nevertheless, this lemma gives us a wide variety of sets $E_{ij}$ and $F_{ij}$ for which equality~\eqref{eq:E_ijF_ij} holds.

\begin{example}\label{ex:Verma5.5} Recall the Verma graph from Figure
\ref{fig:iv-verma-ommen-mixed}(b). In the context of Theorem \ref{thm:one_nested}
let
\begin{align*}
  a_1 &= 2, \quad a_2 = 3, \quad b_1 = 2, \quad b_2 = 4, \\
  C_1 &= \{1\}, \quad  C_2 = \{1\}, \quad D_1 = \{1\}, \quad D_2 = \{3\}, \quad A_1=A_2=B_1=B_2=\emptyset.
\end{align*}
Then $(C_1,D_1), (\{a_i\},\{b_1\})$ satisfy the swapping property for
$i=1,2$.  The same is true for $(C_2,D_2), (\{a_i\}, \{b_2\})$ for $i=1,2$. Moreover,
$$ |\Sigma_{12, 12}|=  \mathcal{P}_{1,1}\mathcal{P}_{2,2,(\{234\},\{234\})},\quad 
|\Sigma_{12,34}| =  \mathcal{P}_{1,3}\mathcal{P}_{2,4,(\{234\},\{24\})},$$
$$|\Sigma_{13, 12}|= \mathcal{P}_{1,1}\mathcal{P}_{3,2,(\{234\},\{234\})}, \quad
|\Sigma_{13,34}|= \mathcal{P}_{1,3}\mathcal{P}_{3,4,(\{234\},\{24\})}.$$
By Lemma~\ref{lem:E_ijF_ij}, we have that
$$\begin{vmatrix}
                   \mathcal{P}_{2,2,(\{234\},\{234\})} & \mathcal{P}_{2,4,(\{234\},\{24\})} \\
                    \mathcal{P}_{3,2,(\{234\},\{234\})} &
                    \mathcal{P}_{3,4,(\{234\},\{24\})}
                  \end{vmatrix} =\mathcal{P}_{\{2,3\},\{2,4\},(\{234\},\{24\})}.$$
Therefore, the conditions of Theorem \ref{thm:one_nested} are satisfied, and
\begin{align*}
  %\label{VermaMatrix1-again}
  f_{\text{Verma}} = \begin{vmatrix}
    |\Sigma_{12, 12}| & |\Sigma_{12,34}|\\
    |\Sigma_{13, 12}| & |\Sigma_{13,34}|
  \end{vmatrix} &= \mathcal{P}_{1,1}\mathcal{P}_{1,3}\mathcal{P}_{\{2,3\},\{2,4\},(\{234\},\{24\})}.
\end{align*}
Now $\mathcal{P}_{\{2,3\},\{2,4\},(\{2,3,4\},\{2,4\})}$ is zero because
$\{2,3\}$ and $\{2,4\}$ are $(\{234\}, \{2,4\})$-restricted trek
separated by $(\{2\}, \emptyset)$. That is, the nested determinant
giving $f_{\text{Verma}}$ vanishes because treks between $\{2, 3\}$ and $\{2,4\}$
that only use $\{2,3,4\}$ on the left and $\{2,4\}$ on the right must all
pass through $2$ on the left.
%Indeed, if we consider the matrix
%\begin{align*}
%  \left( (I-\Lambda)_{24, 24}\right)^{-T} \Omega_{24,
%  234} \left( (I-\Lambda)_{234, 234}\right)^{-T} =
%  \begin{pmatrix}
%    \omega_{22} & \omega_{22}\lambda_{23} &
%    \omega_{22}\lambda_{23}\lambda_{34} + \omega_{24}\\ \omega_{24} &
%    \omega_{24}\lambda_{23} & \omega_{24}\lambda_{23}\lambda_{34} +
%    \omega_{44}
%\end{pmatrix},
%\end{align*}
%one may check that its first $2\times 2$ minor equals
%$\mathcal{P}_{\{2,3\},\{2,4\},(\{1\},\{1,3\})}$.

\end{example}

The following corollary gives a combinatorial interpretation of the
vanishing of a nested determinant like the ones specified in
Theorem~\ref{thm:one_nested}.

\begin{corollary}\label{cor:restricted_treks}
  Suppose the conditions in Theorem~\ref{thm:one_nested} are
  satisfied.  Define matrix entries
  $M_{ij} =|\Sigma_{A_i\uplus C_j\uplus \{a_i\}, B_i\uplus D_j\uplus
    \{b_j\}}|$. Then, $|M| = 0$ if and only if at least one of the
  following holds:
\begin{itemize}
\item[(i)] The sets $\{a_1,\ldots, a_n\}$ and $\{b_1,\ldots, b_n\}$ are $(E,F)$-restricted trek separated by some sets $(X,Y)$ with $|X| + |Y| < n$; or
\item[(ii)] For some $i=1,\ldots, n$, the sets $A_i$ and $B_i$ are $(P_i, Q_i)$-restricted trek separated by some sets $(X,Y)$ with $|X| + |Y| < |A_i|$; or
\item[(iii)] For some $j=1,\ldots, n$, the sets $C_j$ and $D_j$ are $(R_j, S_j)$-restricted trek separated by some sets $(X,Y)$ with $|X| + |Y| < |C_j|$.
\end{itemize}
\end{corollary}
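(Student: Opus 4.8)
The strategy is to combine the factorization from Theorem~\ref{thm:one_nested} with the rank characterization in Theorem~\ref{thm:main}(i). By Theorem~\ref{thm:one_nested}, we have the product factorization
\[
  |M| \;=\; \left(\prod_i\mathcal P_{A_i,B_i, (P_i, Q_i)}\right) \left(\prod_j\mathcal P_{C_j, D_j, (R_j, S_j)}\right)\mathcal P_{\{a_1,\dots, a_n\}, \{b_1,\dots, b_n\}, (E, F)},
\]
valid as an identity of polynomials in the entries of $\Lambda$ and $\Omega$. So $|M|$ vanishes identically over $\mathcal M(G)$ (equivalently, for generic $\Lambda\in\mathbb R^{\mathcal D}$ and $\Omega\in\mathit{PD}(\mathcal B)$) if and only if at least one of the polynomial factors vanishes identically. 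The plan is to translate each of these three types of factors into a restricted-trek-separation statement.

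First I would handle the factor $\mathcal P_{\{a_1,\dots,a_n\},\{b_1,\dots,b_n\},(E,F)}$. By Theorem~\ref{thm:main}(ii) applied with $A = \{a_1,\dots,a_n\}$, $B = \{b_1,\dots,b_n\}$, $P = E$, $Q = F$, this polynomial is the determinant of the $n\times n$ matrix $\Sigma^{(E,F)}_{A,B}$. Hence it vanishes identically if and only if the rank of $\Sigma^{(E,F)}_{A,B}$ is at most $n-1$ for all $\Lambda,\Omega$, which by Theorem~\ref{thm:main}(i) is equivalent to the existence of $(X,Y)$ with $|X|+|Y|\le n-1$ that $(E,F)$-restricted trek-separates $A$ from $B$ — precisely condition (i). Next, for each fixed $i$, the factor $\mathcal P_{A_i,B_i,(P_i,Q_i)}$ equals $\det\Sigma^{(P_i,Q_i)}_{A_i,B_i}$, an $|A_i|\times|A_i|$ determinant; it vanishes identically iff $\Sigma^{(P_i,Q_i)}_{A_i,B_i}$ has rank at most $|A_i|-1$ generically, i.e.\ iff $A_i$ and $B_i$ are $(P_i,Q_i)$-restricted trek-separated by some $(X,Y)$ with $|X|+|Y|<|A_i|$ — condition (ii). The factors $\mathcal P_{C_j,D_j,(R_j,S_j)}$ are handled symmetrically, giving condition (iii). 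Since $|M|=0$ over $\mathcal M(G)$ iff at least one factor vanishes identically, the "if and only if" follows.

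The main subtlety — and the one I would take care to justify explicitly rather than wave at — is the passage from "a product of polynomials vanishes on $\mathcal M(G)$" to "one of the factors vanishes on $\mathcal M(G)$." This needs that the parametrizing domain $\mathbb R^{\mathcal D}_{\mathrm{reg}}\times\mathit{PD}(\mathcal B)$ is irreducible (as a semialgebraic/constructible set it is Zariski-dense in the irreducible variety $\mathbb R^{\mathcal D}\times\{\Omega \text{ symmetric supported on }\mathcal B\}$, the latter being an affine space), so that the coordinate ring is an integral domain and a product of polynomials vanishing on a dense subset forces one factor to vanish on that subset. I would also note that $\mathcal M(G)$ is the image of this domain under the rational parametrization, so a polynomial in $\Sigma$ lies in $\mathcal I(G)$ iff its pullback vanishes on the domain, which is what lets us move freely between "$|M|=0$ on $\mathcal M(G)$" and "the pulled-back polynomial identity has a vanishing factor." Everything else is a direct invocation of Theorem~\ref{thm:main}; no further computation is needed.
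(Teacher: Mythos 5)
Your proof is correct and follows essentially the same route as the paper: factor $|M|$ via Theorem~\ref{thm:one_nested} and then apply Theorem~\ref{thm:main} to translate the vanishing of each factor into a restricted trek separation. The only difference is that you explicitly justify the step that a product of polynomials vanishing over $\mathcal M(G)$ forces one factor to vanish (via Zariski density of the parameter domain and the integral-domain property of the polynomial ring), a detail the paper's proof states without comment.
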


\begin{proof} By Theorem~\ref{thm:one_nested}, $|M|$
  factors as in~\eqref{eq:factorization}.  Therefore, $|M| = 0$ if and
  only if one of the factors in this expression vanishes. By
  Theorem~\ref{thm:main},  each of these factors vanishes
  if and only if the corresponding restricted trek separation is
  satisfied.
\end{proof}

%\subsubsection{The Verma constraint 1.}
%
%Now by Corollary~\ref{cor:restricted_treks} $\mathcal{P}_{\{2,3\},\{2,4\},(\{1\},\{1,3\})} = 0$ as $\{2,3\}$ and $\{2,4\}$ are restricted-$(\{1\}, \{1,3\})$ by $(\{2\},\emptyset)$. 

%\subsubsection{The Verma constraint 2.}
%
%Again consider the Verma graph from Figure
%\ref{fig:iv-verma-ommen-mixed}(b). Letting
%\begin{align*}
%  a_1 &= 1, \quad a_2 = 3, \quad b_1 = 1, \quad b_2 = 2, \\
%  A_2 &= \{2,3\}, \quad  B_2 = \{3,4\}, \quad A_1=B_1=C_1=C_2=D_1=D_2=\emptyset,
%\end{align*}
%one may check the conditions of Theorem \ref{thm:one_nested} to find that
%\begin{align*}
%  %\label{VermaMatrix2-again}
%  f_{\text{Verma}} = \begin{vmatrix}
%    \sigma_{11} & \sigma_{12}\\ 
%    |\Sigma_{231,341}| & |\Sigma_{231,342}|
%  \end{vmatrix}
%                         &=
%                         \begin{vmatrix}
%                           \mathcal{P}_{1,1} & \mathcal{P}_{1,2}\\ 
%                           \mathcal{P}_{1,1}\mathcal{P}_{\{2,3\},\{3,4\},(\{1\},\{2\})} &
%                           \mathcal{P}_{1,2}\mathcal{P}_{\{2,3\},\{3,4\},(\{1\},\{2\})}
%                         \end{vmatrix} \\
%  &= \mathcal{P}_{\{2,3\},\{3,4\},(\{1\},\{2\})}\mathcal{P}_{\{1,1\},\{1,2\}}.
%\end{align*}
%This shows that once we ``account for'' the treks between $\{2,3\}$
%and $\{3,4\}$ which avoid $(\{1\},\{2\})$ the Verma constraint arises
%simply from a repeated row index giving
%$\mathcal{P}_{\{1,1\},\{1,2\}}=0$.
%

\subsection{Restricted trek separation and ancestral
  vertices}\label{sec:5.2}

We return to the parentally nested determinants and specifically the
nested determinant $f_{ij}$ defined in~\eqref{eq:nested-det-parent}. In
Proposition~\ref{prop:ancestral-nested-det} we gave conditions that
entailed the vanishing of $f_{ij}$.  We now see how this result is also implied by
restricted trek separation.

\begin{proposition}\label{prop:restricted_trek_ancestral}  Consider
  the conditions from Proposition~\ref{prop:ancestral-nested-det},
  i.e., $i$ is a vertex in $G=(V, \mathcal{D}, \mathcal{B})$
  satisfying 
   \begin{enumerate}%[label=(\roman*)]
  \item[(i)] $\pa(i)\cap\sib(i)=\emptyset$,
  \item[(ii)] all vertices in $\pa(i)$ are ancestral, and
  \item[(iii)] the set $J$ of all ancestral vertices in $V\setminus(\pa(i)\cup\sib(i)\cup\{i\})$ is non-empty.
  \end{enumerate}
  Then for every $j\in J$ the sets $\pa(i)\cup\{j\}$ and $\pa(i)\cup\{i\}$ are
  $(\pa(i)\cup\{j\},V)$-restricted trek separated by
  $(\emptyset, \pa(v))$.  This restricted trek separation implies that
  $f_{ij}\in\mathcal{I}(G)$ for all $j\in J$, i.e., all parentally nested determinants for $(i,J)$ lie in $\mathcal I(G)$.  
\end{proposition}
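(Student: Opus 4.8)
The plan is to exhibit $f_{ij}$ as a nested determinant of exactly the type treated in Theorem~\ref{thm:one_nested}, then to verify the hypotheses of that theorem so that $f_{ij}$ factors as a product of $\mathcal P$-polynomials, and finally to identify the factor whose vanishing is forced by the claimed restricted trek separation. Write $k=|\pa(i)|$ and enumerate $\pa(i)=\{p_1,\dots,p_k\}$; fix $j\in J$. Recall from~\eqref{eq:nested-det-matrix}--\eqref{eq:nested-det-parent} that $f_{ij}$ is the determinant of the $(k+1)\times(k+1)$ matrix with entries $\bigl|\Sigma_{\pa(r)\uplus\{r\},\,\pa(r)\uplus\{c\}}\bigr|$ for $r\in\pa(i)\uplus\{j\}$ and $c\in\pa(i)\uplus\{i\}$. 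I would match this to the matrix $M$ of Theorem~\ref{thm:one_nested} by taking $n=k+1$, with the ``moving'' pair in row $r$ and column $c$ being $(\{r\},\{c\})$ and the block pieces $A_\cdot\uplus C_\cdot$ playing the role of the common index set $\pa(r)$ on each side; the slight twist is that here the block index set on the row side depends only on $r$ (it is $\pa(r)$) rather than being a clean $A_r\uplus C_c$ split, so I would first argue, using ancestrality, that this is harmless.

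The key structural input is equation~\eqref{eq:ancestral}: since every vertex $r\in\pa(i)\cup J$ is ancestral, every trek from $r$ (or from any vertex of $\pa(r)\cup\{r\}$ lying below $r$) into $\pa(r)\cup\{c\}$ decomposes cleanly, and the Schur-complement identity~\eqref{eq:schur} turns $\bigl|\Sigma_{\pa(r)\uplus\{r\},\pa(r)\uplus\{c\}}\bigr|$ into $\bigl|\Sigma_{\pa(r),\pa(r)}\bigr|$ times a single entry of $(I-\Lambda')^T\Sigma(I-\Lambda')$-type, which by the trek rule is $\mathcal P_{\pa(r),\pa(r)}\cdot\mathcal P_{\{r\},\{c\},(E_{rc},F_{rc})}$ for suitable restriction sets. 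Concretely I expect $\mathcal P_{\pa(r),\pa(r)}=\bigl|\Sigma_{\pa(r),\pa(r)}\bigr|$ and the moving factor to be $\mathcal P_{\{r\},\{c\},(\,V,\;\pa(c)\cup\{c\}\,)}$ or something of this form, reflecting that after substituting~\eqref{eq:ancestral} the only treks into column $c$ that survive are those arriving through $\pa(c)\cup\{c\}$ on the right. This verifies the swapping property for each triple $(A_r,B_r),(C_c,D_c),(\{r\},\{c\})$ in Theorem~\ref{thm:one_nested}: the row blocks are forced to pair $r\leftrightarrow r$ and the column blocks $c\leftrightarrow c$ precisely because ancestrality prevents any rerouting, and swapping trek subsystems between such rigid blocks is automatic. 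Then I would check condition~\eqref{eq:E_ijF_ijEF} via Lemma~\ref{lem:E_ijF_ij}, with $E=\pa(i)\cup\{j\}$ and $F=V$ (or the appropriate $F=\pa(i)\cup\{i\}$-flavoured set); the containment hypotheses $E\setminus\{a_k:k\ne i\}\subseteq E_{ij}\subseteq E\cup(V\setminus\an(a_i))$ should hold because the only row-vertices are $\pa(i)\cup\{j\}$ and ancestrality controls their ancestor sets.

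With the hypotheses of Theorem~\ref{thm:one_nested} in place, $f_{ij}$ factors as
$\bigl(\prod_r\mathcal P_{\pa(r),\pa(r)}\bigr)\bigl(\prod_c\mathcal P_{\pa(c),\pa(c)}\bigr)\cdot\mathcal P_{\pa(i)\cup\{j\},\,\pa(i)\cup\{i\},\,(E,F)}$,
and by Corollary~\ref{cor:restricted_treks} the vanishing of $f_{ij}$ follows once the last factor vanishes. For that I would invoke Theorem~\ref{thm:main}(i): the sets $\pa(i)\cup\{j\}$ and $\pa(i)\cup\{i\}$, each of size $k+1$, are $(\pa(i)\cup\{j\},V)$-restricted trek separated by $(\emptyset,\pa(i))$ — indeed $\bigl|\emptyset\bigr|+\bigl|\pa(i)\bigr|=k<k+1$, so the rank of the corresponding matrix is at most $k$ and the determinant $\mathcal P_{\pa(i)\cup\{j\},\pa(i)\cup\{i\},(E,F)}$ is identically zero. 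The restricted trek separation itself is the routine geometric heart: any trek between $\pa(i)\cup\{j\}$ and $\pa(i)\cup\{i\}$ whose left side stays in $\pa(i)\cup\{j\}$ and whose right side ends at a vertex of $\pa(i)\cup\{i\}$ must, since $i$ is reached only through an incoming directed edge from a parent and each $p_\ell\in\pa(i)$ is likewise reached only through $\pa(p_\ell)$, either terminate in $\pa(i)$ on the right or have already passed through $\pa(i)$; the one available ``free'' endpoint $i$ on the right is blocked because no trek can end at $i$ without last traversing an edge out of some vertex of $\pa(i)$. (Here one must be careful that $j\notin\pa(i)\cup\sib(i)\cup\{i\}$ by hypothesis (iii), and that $\pa(i)\cap\sib(i)=\emptyset$ by (i), so there is no bidirected shortcut into $i$.) I expect the main obstacle to be bookkeeping: making the identification of $f_{ij}$ with the matrix $M$ of Theorem~\ref{thm:one_nested} precise when the row-block index set is $\pa(r)$ rather than a literal disjoint union $A_r\uplus C_c$, and pinning down exactly which restriction sets $E_{rc},F_{rc}$ arise from the Schur-complement/trek-rule manipulation so that Lemma~\ref{lem:E_ijF_ij} applies on the nose. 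Everything else is a direct appeal to the already-established Theorems~\ref{thm:main} and~\ref{thm:one_nested}.
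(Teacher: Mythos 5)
Your overall architecture matches the paper's: the appendix proof verifies the swapping property for the pairs $(\pa(r),\pa(r))$, $(\{r\},\{c\})$, factors each entry of $F_{i,\{j\}}$, and finishes with Lemma~\ref{lem:E_ijF_ij} --- which is precisely a hands-on verification of the hypotheses of Theorem~\ref{thm:one_nested} with $A_r=B_r=\pa(r)$, $C_c=D_c=\emptyset$, $a_r=r$, $b_c=c$ (this choice also dissolves your worry about the row-block set not being a clean $A_r\uplus C_c$ split, and it means there is no second product $\prod_c$ in the factorization). However, the middle of your plan has a substantive error: you put the restriction sets on the wrong side. The correct entry-wise factorization, \eqref{eq:ancestral_factorization}, is $|\Sigma_{\pa(r)\uplus\{r\},\pa(r)\uplus\{c\}}|=\mathcal P_{\pa(r),\pa(r)}\cdot\mathcal P_{r,c,(\{r\},V)}$: ancestrality of $r$ forces the trek carrying $r$ to $c$ to have left side exactly $\{r\}$ (it must leave $r$ by a bidirected edge or a directed edge out of $r$, else its left side contains a parent of $r$ and collides with the $\pa(r)$-to-$\pa(r)$ block), while its right side is unrestricted. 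Your guess $\mathcal P_{\{r\},\{c\},(V,\pa(c)\cup\{c\})}$ is false as stated (intermediate right-side vertices of a trek into $c$ need not lie in $\pa(c)\cup\{c\}$) and, more importantly, feeds the wrong data into Lemma~\ref{lem:E_ijF_ij}: the final factor has to be $\mathcal P_{\pa(i)\cup\{j\},\pa(i)\cup\{i\},(\pa(i)\cup\{j\},V)}$, carrying a \emph{left} restriction. That left restriction is not cosmetic --- without it the separation by $(\emptyset,\pa(i))$ fails, since a trek $u\leftarrow\cdots\leftarrow w\bi i$ with $w\in\sib(i)$, or a trek with top $i$, reaches $i$ with right side $\{i\}$ and is excluded only because its left side leaves $\pa(i)\cup\{j\}$ (here (i) and (iii) enter). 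You do invoke exactly these hypotheses in your final paragraph, but the factorization step that makes that paragraph applicable is the one you have backwards.

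A second, smaller gap: ``swapping trek subsystems between such rigid blocks is automatic'' is not. After swapping you must still rule out a right-side collision between the $r$-to-$c$ trek of one system and a $\pa(r)$-to-$\pa(r)$ trek of the other; the paper does this by switching tails at the putative intersection to produce a trek of the form $r(\bi)\to\cdots\to z\in\pa(r)$, contradicting ancestrality of $r$. Both gaps are fillable with the ancestrality arguments you already gesture at, so the plan is salvageable, but as written the proof does not go through.
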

The proof can be found in Appendix~\ref{app:pf_prop_restricted_trek_ancestral}.

\begin{example} %[The Verma constraint revisited]
Again consider the Verma graph from Figure
\ref{fig:iv-verma-ommen-mixed}(b). We have that 
%Letting
%\begin{align*}
%  a_1 &= 1, \quad a_2 = 3, \quad b_1 = 1, \quad b_2 = 2, \\
%  A_2 &= \{2,3\}, \quad  B_2 = \{3,4\}, \quad A_1=B_1=C_1=C_2=D_1=D_2=\emptyset,
%\end{align*}
%one may check the conditions of Theorem \ref{thm:one_nested} to find that
\begin{align*}
  %\label{VermaMatrix2-again}
  f_{\text{Verma}} = \begin{vmatrix}
    \sigma_{13} & \sigma_{14}\\ 
    |\Sigma_{123,123}| & |\Sigma_{123,124}|
  \end{vmatrix}.
%                         &=
%                         \begin{vmatrix}
%                           \mathcal{P}_{1,1} & \mathcal{P}_{1,2}\\ 
%                           \mathcal{P}_{1,1}\mathcal{P}_{\{2,3\},\{3,4\},(\{1\},\{2\})} &
%                           \mathcal{P}_{1,2}\mathcal{P}_{\{2,3\},\{3,4\},(\{1\},\{2\})}
%                         \end{vmatrix} \\
%  &= \mathcal{P}_{\{2,3\},\{3,4\},(\{1\},\{2\})}\mathcal{P}_{\{1,1\},\{1,2\}}.
\end{align*}
As mentioned in Section~\ref{sec:ancestral}, $f_{\text{Verma}} = f_{41}$. Indeed, $(\{3,1\}, \{3,4\})$ are $(\{3,1\}, V)$-restricted trek separated by $(\emptyset, \{3\})$. 
%This shows that once we ``account for'' the treks between $\{2,3\}$
%and $\{3,4\}$ which avoid $(\{1\},\{2\})$ the Verma constraint arises
%simply from a repeated row index giving
%$\mathcal{P}_{\{1,1\},\{1,2\}}=0$.
\end{example}

\begin{example}%[Ancestral graph from Richardson and Spirtes 2002]
\begin{figure}[t]
  \centering
  \tikzset{
    every node/.style={circle, draw,inner sep=1mm, minimum size=0.55cm, draw, thick, black, fill=gray!20, text=black},
    every path/.style={thick}
  }
  \scalebox{0.85}{
    (a) \;
    \begin{tikzpicture}[align=center,node distance=2.2cm]
      \node [] (1) {1};
      \node [] (2) [right of=1] {2};
      \node [] (3) [below of=1]    {3};
      \node [] (4) [below of=2]    {4};

      \draw[blue] [-latex] (1) edge (3);
      \draw[blue] [-latex] (2) edge (4);
      % \draw[red] [latex-latex, bend left] (2) edge (4);
      \draw[red] [latex-latex] (1) edge (4);
      \draw[red] [latex-latex] (1) edge (2);
      \draw[red] [latex-latex] (2) edge (3);
    \end{tikzpicture}
    \qquad 
    (b) \;
    \begin{tikzpicture}[align=center,node distance=2.2cm]
      \node [] (1) {1};
      \node [] (3) [below right of=1] {3};
      \node [] (2) [above right of=3] {2};
      \node [] (4) [right of=2] {4};
      \node [] (6) [below right of=4] {6};
      \node [] (5) [above right of=6] {5};

      \draw[blue] [-latex] (1) edge (3);
      \draw[blue] [-latex] (2) edge (3);
      \draw[blue] [-latex] (4) edge (6);
      \draw[blue] [-latex] (5) edge (6);
      \draw[red] [latex-latex, bend left] (1) edge (4);
      \draw[red] [latex-latex, bend left] (2) edge (5);
      \draw[red] [latex-latex, bend left] (1) edge (5);
      \draw[red] [latex-latex] (4) edge (5);
      \draw[red] [latex-latex] (1) edge (2);
      \draw[red] [latex-latex] (2) edge (4);
      \draw[red] [latex-latex] (1) edge (6);
      \draw[red] [latex-latex] (2) edge (6);
      \draw[red] [latex-latex] (3) edge (5);
      \draw[red] [latex-latex] (3) edge (4);
    \end{tikzpicture}
  }
  \vspace{.3cm}
  
  \scalebox{0.85}{
    (c) \;
    \begin{tikzpicture}[align=center,node distance=2.2cm]
      \node [] (1) {1};
      \node [] (3) [below right of=1] {3};
      \node [] (2) [above right of=3] {2};
      \node [] (5) [below right of=2] {5};
      \node [] (4) [above right of=5] {4};

      \draw[blue] [-latex] (1) edge (3);
      \draw[blue] [-latex] (2) edge (3);
      \draw[blue] [-latex] (2) edge (5);
      \draw[blue] [-latex] (4) edge (5);
      \draw[red] [latex-latex, bend left] (1) edge (4);
      \draw[red] [latex-latex] (1) edge (5);
      \draw[red] [latex-latex] (1) edge (2);
      \draw[red] [latex-latex] (2) edge (4);
      \draw[red] [latex-latex] (3) edge (4);
    \end{tikzpicture}
  }
  \caption{Ancestral graph examples.}\label{fig:ancestral-graphs}
\end{figure}
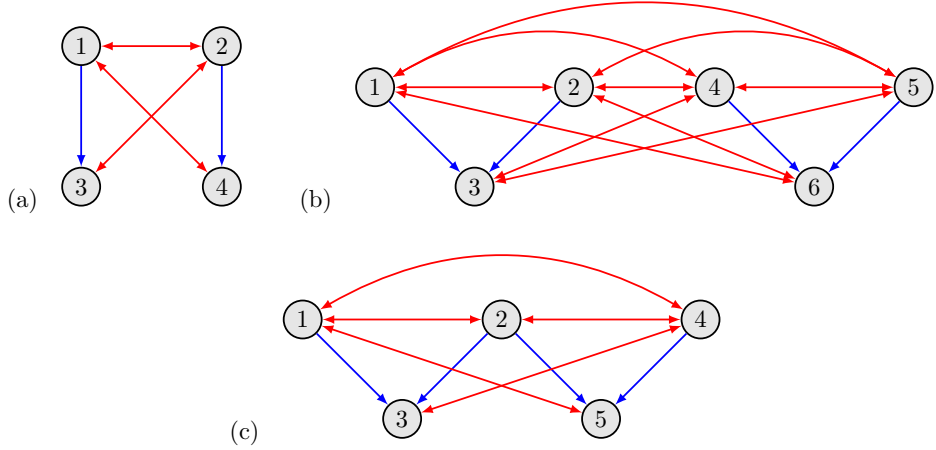

Consider the ancestral graph from Figure \ref{fig:ancestral-graphs}(a), which was studied in more detail in~\cite{richardson:2002}. Applying Proposition~\ref{prop:restricted_trek_ancestral}, we choose $i=3$ and $j=4$ to obtain that the corresponding polynomial $f_{34}$ vanishes. Indeed, by Lemma~\ref{lem:swapping}, we have that
\begin{align*}
 f_{34} &= \begin{vmatrix}
    \sigma_{11} & \sigma_{13} \\ 
    |\Sigma_{24,21}| & |\Sigma_{24,23}|
  \end{vmatrix}\\
                      & =
                       \begin{vmatrix}
                         \mathcal{P}_{1,1,(\{1,3,4\},\{1,2,3,4\})} & \mathcal{P}_{1,3,(\{1,3,4\},\{1,2,3,4\})} \\ 
                         \mathcal{P}_{2,2}\mathcal{P}_{4,1,(\{1,3,4\},\{1,2,3,4\})} &
                         \mathcal{P}_{2,2}\mathcal{P}_{4,3,(\{1,3,4\},\{1,2,3,4\})}
                       \end{vmatrix}                   \;=\;\mathcal{P}_{2,2}^2\mathcal{P}_{\{1,4\},\{1,3\},(\{1,3,4\},\{1,2,3,4\})}. 
\end{align*}
As all treks between $\{1,4\}$ and $\{1,3\}$ which avoid $2$ on the left must intersect $1$ on the right we have that $\mathcal{P}_{\{1,4\},\{1,3\},(\{1,3,4\},\{1,2,3,4\})}=0$ and thus the above nested determinant is an element of the vanishing ideal for the graph. It can be checked by computational algebra that the above determinant generates the ideal of the model.
\end{example}

%Here, $\Sigma_{A,B}$ denotes the determinant of the $A\times B$
%submatrix of the symmetric matrix of indeterminates
%$\Sigma=(\sigma_{ij})$.  We also use the usual shorthands when
%referring to indices, so $1\equiv \{1\}$, $12=\{1,2\}$, etc.

\begin{example} %[Ancestral graph with larger parent sets]
For the ancestral graph in Figure \ref{fig:ancestral-graphs}(b), we choose $i=6, j=3$. Then, using Lemma~\ref{lem:swapping}, we have that
\begin{align*}
 f_{63}= \begin{vmatrix}
    |\Sigma_{123,124}| & |\Sigma_{123,125}| & |\Sigma_{123,126}| \\
    \sigma_{44} & \sigma_{45} & \sigma_{46}\\ 
    \sigma_{54} & \sigma_{55} & \sigma_{56}
  \end{vmatrix}
                                          &= \mathcal{P}_{\{1,2\},\{1,2\}} \mathcal{P}_{\{3,4,5\},\{4,5,6\},(\{3,4,5,6\},\{3,4,5,6\})}.
\end{align*}
As all treks between $\{3,4,5\}$ and $\{4,5,6\}$ which are
restricted to only use $\{3,4,5,6\}$ on their left or right sides must use
4 or 5 on their right side it follows that
$$\mathcal{P}_{\{3,4,5\},\{4,5,6\},(\{3,4,5,6\},\{3,4,5,6\})} = 0$$ and thus the
above determinant is an element of the vanishing ideal.
\end{example}
\begin{example} %[Ancestral graph with intersecting parent sets]
Consider the graph from Figure \ref{fig:ancestral-graphs}(c). Lemma~\ref{lem:swapping} implies that
\begin{align*}
  f_{53}=\begin{vmatrix}
    |\Sigma_{123,122}| & |\Sigma_{123,124}| & |\Sigma_{123,125}|\\
    \sigma_{22} & \sigma_{24} & \sigma_{25}\\ 
    \sigma_{42} & \sigma_{44} & \sigma_{45}
  \end{vmatrix}
  = \mathcal{P}_{\{1,2\},\{1,2\}} \mathcal{P}_{\{3,2,4\},\{2,4,5\},(\{2,3,4,5\},\{1,2,3,4,5\})}.
  %\;=\;
  % \begin{vmatrix}
  %   0 & \Sigma_{123,124} & \Sigma_{123,125}\\
  %   \Sigma_{2,2} & \Sigma_{2,4} & \Sigma_{2,5}\\ 
  %   \Sigma_{4,2} & \Sigma_{4,4} & \Sigma_{4,5}
  % \end{vmatrix}
  % \;=\;\\
  % -\Sigma_{123,124}\Sigma_{24,25}+\Sigma_{123,125}\Sigma_{24,24}
  % \;=\;
  % \begin{vmatrix}
  %   \Sigma_{24,24} & \Sigma_{24,25}\\ 
  %   \Sigma_{123,124} & \Sigma_{123,125}
  % \end{vmatrix}.
\end{align*}
As any trek from $\{3,2,4\}$ to $\{2,4,5\}$ which avoids $1$ on
the left must use $2$ or $4$ on the right it follows
that $\mathcal{P}_{\{3,2,4\},\{2,4,5\},(\{2,3,4,5\},\{1,2,3,4,5\})} = 0$. Hence
$f_{53}$ is in (and generates) the vanishing ideal.
\end{example}

%%% Local Variables:
%%% TeX-master: "nested_dets"
%%% End:

%\input{cyclic}

\section{Beyond swapping:  Recursive nesting,
  directed cycles, and the pentad}
\label{sec:beyond-theorem}

%\subsection{Four node graph from Figure \ref{fig:ommen}(a)}

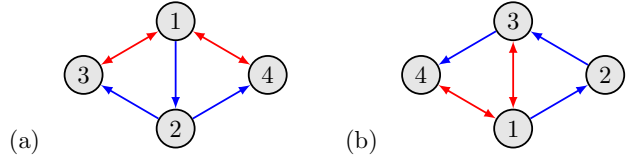
\begin{figure}[t]
  \centering
  \tikzset{
    every node/.style={circle, draw,inner sep=1mm, minimum size=0.55cm, draw, thick, black, fill=gray!20, text=black},
    every path/.style={thick}
  }
  \scalebox{0.85}{
    (a) \;
    \begin{tikzpicture}[align=center,node distance=2.2cm]
      \node [] (1) {1};
      \node [] (3) [below left=0.4cm and 1cm of 1] {3};
      \node [] (4) [below right=0.4cm and 1cm of 1] {4};
      \node [] (2) [below right=0.4cm and 1cm of 3] {2};

      \draw[blue] [-latex] (1) edge (2);
      \draw[blue] [-latex] (2) edge (3);
      \draw[blue] [-latex] (2) edge (4);
      \draw[red] [latex-latex] (1) edge (3);
      \draw[red] [latex-latex] (1) edge (4);
    \end{tikzpicture}
    \qquad 
    (b) \;
    \begin{tikzpicture}[align=center,node distance=2.2cm]
      \node [] (3) {3};
      \node [] (4) [below left=0.4cm and 1cm of 3] {4};
      \node [] (2) [below right=0.4cm and 1cm of 3] {2};
      \node [] (1) [below right=0.4cm and 1cm of 4] {1};

      \draw[blue] [-latex] (1) edge (2);
      \draw[blue] [-latex] (2) edge (3);
      \draw[blue] [-latex] (3) edge (4);
      \draw[red] [latex-latex] (1) edge (4);
      \draw[red] [latex-latex] (1) edge (3);
    \end{tikzpicture}
  }
  \caption{Two four-node graphs whose vanishing ideals are known from
    computational algebra. We are able to write the generators of
    these vanishing ideals as nested determinant but our combinatorial
    conditions do not appear to apply.}\label{fig:ommen}
\end{figure}
In this section we explore examples that are not covered by
Theorem~\ref{thm:one_nested} and Corollary~\ref{cor:restricted_treks}
but whose constraints are still nested determinants.
In Section~\ref{sec:5.3} we consider two mixed graphs for which the constraints could be
presented as nested determinants but for which---we argue---a
recursive nesting of determinants is more natural and more directly
tied to restricted trek separation. In Section~\ref{sec:6.2} we discuss an example of a
directed graph with a directed cycle for which restricted trek separation also implies a nested
determinant constraint. Finally, in Section~\ref{sec:6.3}, we turn to the pentad from factor analysis~\cite{MR2299716},
and show that it is also defined by a nested determinant.

\subsection{Restricted trek separation and determinants of recursively
  nested matrices}\label{sec:5.3}

% Some models are defined by the vanishing of recursively nested determinants. We will see in this section that such constraints can also be explained by restricted trek separation.

%Some of these cases can still be explained via restricted trek separation, but are given by determinants of doubly, triply, or more times nested matrices. 

It is apparent from Theorem~\ref{thm:one_nested} and Corollary~\ref{cor:restricted_treks} that (singly) nested determinants give a way to express restricted trek systems as factors as follows. Recall the Verma graph and Example~\ref{ex:Verma5.5} where we saw that 
\begin{align*}
  %\label{VermaMatrix1-again}
  f_{\text{Verma}} = \begin{vmatrix}
    |\Sigma_{12, 12}| & |\Sigma_{12,34}|\\
    |\Sigma_{13, 12}| & |\Sigma_{13,34}|
  \end{vmatrix} &= \begin{vmatrix} \mathcal{P}_{1,1}\mathcal{P}_{2,2,(\{2,3,4\},\{2,3,4\})} & 
\mathcal{P}_{1,3}\mathcal{P}_{2,4,(\{2,3,4\},\{2,4\})}\\
\mathcal{P}_{1,1}\mathcal{P}_{3,2,(\{2,3,4\},\{2,3,4\})} &
\mathcal{P}_{1,3}\mathcal{P}_{3,4,(\{2,3,4\},\{2,4\})}\end{vmatrix}\\
    &= \mathcal{P}_{1,1}\mathcal{P}_{1,3}\mathcal{P}_{\{2,3\},\{2,4\},(\{2,3,4\},\{2,4\})}.
\end{align*}
Notice that each of the entries in the $2\times 2$ matrix whose determinant equals $f_{\text{Verma}}$ is a determinant, which, because of the swapping property, factors out $\mathcal P_{1,1}$ or $\mathcal P_{1,3}$, and leaves monomials corresponding to restricted trek systems that only use $\{2,3,4\}$ on the left and $\{2,4\}$ on the right.

In other graphs, using single subdeterminants of $\Sigma$ is not enough to factor out restricted trek systems. However, one can instead use recursively nested determinants. We illustrate this in the following two examples.
\begin{example}
As is shown by van Ommen and Mooij, see \cite{VanOmmenMooij_UAI_17}
Appendix B, the graph from Figure \ref{fig:ommen}(a) has vanishing
ideal generated by
\begin{align*}
  f = p_0^2\sigma_{34} + p_0\sigma_{23}p_2 + p_1\sigma_{24}p_0 + p_1\sigma_{22}p_2
\end{align*}
where
\begin{align*}
  p_0 = |\Sigma_{12,12}|, \quad
  p_1 = |\Sigma_{13,21}|, \quad
  p_2 = |\Sigma_{21,14}|.
\end{align*}
One may express $f$ as a nested determinant by noting that
\begin{align*}
  -f =
  \begin{vmatrix}
    0 & |\Sigma_{12,12}| & |\Sigma_{12,14}|\\
    |\Sigma_{12,12}| & \sigma_{22} & \sigma_{24}\\
    |\Sigma_{13,12}| & \sigma_{32} & \sigma_{34} 
  \end{vmatrix}
                     &=
                       \begin{vmatrix}
                         |\Sigma_{112,112}| & |\Sigma_{12,12}| & |\Sigma_{12,14}|\\
                         |\Sigma_{12,12}| & \sigma_{22} & \sigma_{24}\\
                         |\Sigma_{13,12}| & \sigma_{32} & \sigma_{34}
                       \end{vmatrix}.
\end{align*}
While the above representation of $f$ suggests applying Theorem \ref{thm:one_nested} with
\begin{align*}
  a_1 &= 2, \quad a_2 = 2, \quad a_3 = 3, \quad b_1 = 2, \quad b_2 = 2, \quad b_3 = 4, \\
  A_1 &= \{1\}, \quad B_1 = \{1\}, \quad C_1 = \{1\}, \quad D_1 = \{1\}, \\
  A_i &= B_i = C_i = D_i = \emptyset \quad (i=2,3),
\end{align*}
this, unfortunately, does not seem to satisfy the conditions of the
theorem. On the other hand, we can express $f$ as the determinant of a
matrix whose entries are themselves nested determinants, namely,
$$f = \begin{vmatrix} |\Sigma_{12,12}| & |\Sigma_{12, 14}|\\
\begin{vmatrix} |\Sigma_{12, 12}| & |\Sigma_{12, 13}|\\
\Sigma_{22} &\Sigma_{23}
\end{vmatrix} &
\begin{vmatrix}|\Sigma_{12, 12}| & |\Sigma_{12,13}|\\
\Sigma_{42}&\Sigma_{43}
\end{vmatrix}
\end{vmatrix}.$$
Moreover, using Lemma~\ref{lem:swapping}, we get the following factorizations:
\begin{align*}
  |\Sigma_{12,12}|
  &= \mathcal P_{1,1}\mathcal P_{2,2, (\{2,3,4\}, \{2,3,4\})}, &\,
                                                                 |\Sigma_{12,
                                                                 13}|
  &= \mathcal P_{1,1}\mathcal P_{2,3,(\{2,3,4\},\{2,3,4\})},\,\\
  |\Sigma_{12,14}| &= \mathcal P_{1,1}\mathcal P_{2,4,(\{2,3,4\},
                     \{2,3,4\})}.
\end{align*}
 Thus,
 \begin{align*}
   \begin{vmatrix} |\Sigma_{12, 12}| & |\Sigma_{12, 13}|\\
\Sigma_{22} &\Sigma_{23}
\end{vmatrix}
              &= \mathcal P_{1,1}\omega_{2,2}\lambda_{12}\omega_{1,3} = \mathcal P_{1,1}\mathcal P_{2,2,(\{2,3,4\},\{2,3,4\})}\mathcal P_{3,2,(\{3,4\}, \{12,3,4\})},
   \\
   \begin{vmatrix}|\Sigma_{12, 12}| & |\Sigma_{12,13}|\\
\Sigma_{42}&\Sigma_{43}
\end{vmatrix}
             &= \mathcal
               P_{1,1}\omega_{2,2}\lambda_{1,2}\omega_{1,3}\lambda_{2,4}
               = \mathcal P_{1,1}\mathcal P_{2,2, (\{2,3,4\},
               \{2,3,4\})}\mathcal P_{3,4, (\{3,4\}, \{1,2,3,4\})}.
 \end{align*}
 This implies that
\begin{align*}
  f
  &= \mathcal P_{1,1}^2\mathcal P_{2,2, (\{2,3,4\}, \{2,3,4\})}\begin{vmatrix} P_{2,2, (\{2,3,4\}, \{2,3,4\})} & \mathcal P_{2,4, (\{2,3,4\}, \{2,3,4\})}\\
\mathcal P_{3,2, (\{34\}, \{1,2,3,4\})} &\mathcal P_{3,4, (\{34\}, \{1,2,3,4\})}
\end{vmatrix}\\
&= \mathcal P_{1,1}^2\mathcal P_{2,2, (\{2,3,4\}, \{2,3,4\})}\mathcal
                  P_{\{2,3\}, \{2,4\}, (\{2,3,4\}, \{1,2,3,4\})},
\end{align*}
where we have used Lemma~\ref{lem:E_ijF_ij} in the second step. Indeed, the
sets $\{2,3\}$ and $\{2,4\}$ are $(\{2,3,4\}, \{1,2,3,4\})$-restricted trek-separated by $(\emptyset, \{2\})$.
\end{example}

\begin{example}
Now consider the graph in Figure \ref{fig:ommen}(b). Its
vanishing ideal is generated by the polynomial
\begin{align*} 
  f=\begin{vmatrix}
    |\Sigma_{112,112}| & |\Sigma_{12,13}| & |\Sigma_{12,12}| \\
    |\Sigma_{13,12}| & \sigma_{33} & \sigma_{32} \\
    |\Sigma_{14,12}| & \sigma_{43} & \sigma_{42}
  \end{vmatrix}.
\end{align*}
While the above representation of $f$ suggests applying Theorem \ref{thm:one_nested},
%with
%\begin{align*}
%  a_1 &= 2, \quad a_2 = 3, \quad a_3 = 4, \quad b_1 = 2, \quad b_2 = 3, \quad b_3 = 2, \\
%  A_1 &= \{1\}, \quad B_1 = \{1\}, \quad C_1 = \{1\}, \quad D_1 = \{1\}, \\
%  A_i &= B_i = C_i = D_i = \emptyset \quad (i\in\{2,3\}),
%\end{align*}
this, unfortunately, does not seem to satisfy the conditions of the
theorem either. On the other hand, we can express $f$ as the
determinant of a matrix whose entries are themselves nested
determinants:
\[
  f=\begin{vmatrix}
    |\Sigma_{12,13}| & |\Sigma_{12,14}|\\ 
    \begin{vmatrix}
      \Sigma_{2,3} & \Sigma_{3,3}\\
      |\Sigma_{12,12}| &      |\Sigma_{13,12}|
    \end{vmatrix}
    &
    \begin{vmatrix}
      \Sigma_{2,4} & \Sigma_{3,4}\\
      |\Sigma_{12,12}| &      |\Sigma_{13,12}|
    \end{vmatrix}
  \end{vmatrix}.
\]
Moreover, by Lemma~\ref{lem:swapping},
\begin{align*}
  |\Sigma_{12, 12}|
  &= \mathcal P_{1,1}\mathcal P_{2,2,(\{2,3,4\}, \{2,3,4\})},
  & |\Sigma_{12, 13}| &= \mathcal P_{1,1}\mathcal P_{2,3,(\{2,3,4\},
                        \{2,3,4\})},\\
  |\Sigma_{12, 14}|&=\mathcal P_{1,1}\mathcal P_{2,4,(\{2,3,4\},
                     \{2,3,4\})}.
\end{align*}
Consequently,
\begin{align*}
  &\begin{vmatrix}
      \Sigma_{2,3} & \Sigma_{3,3}\\
      |\Sigma_{12,12}| &      |\Sigma_{13,12}|
    \end{vmatrix} = \mathcal P_{1,1}\mathcal P_{2,2,(\{2,3,4\},
                         \{2,3,4\})}\mathcal P_{3,3, (\{3,4\}, \{3,4\})},\\
  &\begin{vmatrix}
      \Sigma_{2,4} & \Sigma_{3,4}\\
      |\Sigma_{12,12}| &      |\Sigma_{13,12}|
    \end{vmatrix} = \mathcal P_{1,1}\mathcal P_{2,2,(\{2,3,4\},
                         \{2,3,4\})}\mathcal P_{3,4,(\{3,4\}, \{3,4\})}.
\end{align*}
    Thus, we can write the full determinant as
    $$ f
  %   \begin{vmatrix}
  %   |\Sigma_{12,13}| & |\Sigma_{12,14}|\\ 
  %   \begin{vmatrix}
  %     \Sigma_{2,3} & \Sigma_{3,3}\\
  %     |\Sigma_{12,12}| &      |\Sigma_{13,12}|
  %   \end{vmatrix}
  %   &
  %   \begin{vmatrix}
  %     \Sigma_{2,4} & \Sigma_{3,4}\\
  %     |\Sigma_{12,12}| &    |\Sigma_{13,12}|
  %   \end{vmatrix}
  % \end{vmatrix}
    = \mathcal P_{1,1}^2\mathcal P_{2,2, (\{2,3,4\}, \{2,3,4\})}\begin{vmatrix}
    \mathcal P_{2,3, (\{2,3,4\}, \{2,3,4\})}&\mathcal P_{2,4,(\{2,3,4\}, \{2,3,4\})}\\\mathcal P_{3,3,(\{3,4\}, \{3,4\})}&\mathcal P_{3,4,(\{3,4\},\{3,4\})}\end{vmatrix}.$$
 By Lemma~\ref{lem:E_ijF_ij},
  $$ f=\mathcal P_{1,1}^2\mathcal P_{2,2, (2,3,4, 2,3,4)}\mathcal P_{\{2,3\}, \{3,4\}, (\{2,3,4\}, \{2,3,4\})}.$$
 The last term suggests that $\{2,3\}$ and $\{3,4\}$ are $(\{2,3,4\},\{2,3,4\})$-restricted trek separated. Indeed, this is the case, and they are separated by $(\emptyset, \{3\})$.
\end{example}
\begin{remark}
  Appendix B of \cite{VanOmmenMooij_UAI_17} explicitly lists the
  (minimal) generators of all vanishing ideals of acyclic mixed graphs
  on 4 nodes. Of these graphs, only those from Figure \ref{fig:ommen}
  cannot be immediately recognized as being determinantal constraints
  on the covariance matrix. From what we have shown above we now see
  that all generators of vanishing ideals of acyclic mixed
  graphs on four nodes can be written as nested determinants with, at
  most, a single level of nesting (i.e., as determinants of
  determinants), and moreover can be explained via restricted trek separation.
\end{remark}

As we also record in Section~\ref{sec:discussion}, we believe that
restricted trek separation can always be formed as a factor of a
vanishing recursively nested determinant.  Moreover, we deem it
possible that such determinants define all acyclic linear structural
equation models.  We defer further exploration of these questions to a
future study.

\subsection{Graphs with cycles}\label{sec:6.2}

Although we believe our results from the previous two sections can be extended to graphs containing cycles, the situation there is a bit more complicated. Even extending Theorem~\ref{TrekSystems} to the cyclic case is not a simple task. It was accomplished (along with other results) in a separate article~\cite{DST}.

The following model, although it has cycles, is defined by the vanishing of a nested determinant, which can be explained by restricted trek separation. However, we wish to point out that a more sophisticated example might need the definition of further notions, like those that appear in~\cite{DST}.

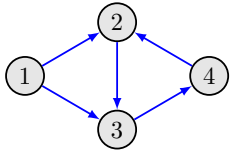
\begin{figure}[t]
  \centering
  \tikzset{
    every node/.style={circle, draw,inner sep=1mm, minimum size=0.55cm, draw, thick, black, fill=gray!20, text=black},
    every path/.style={thick}
  }
  \scalebox{0.85}{
    \begin{tikzpicture}[align=center,node distance=2.2cm]
      \node [] (2) {2};
      \node [] (1) [below left=0.4cm and 1cm of 2] {1};
      \node [] (4) [below right=0.4cm and 1cm of 2] {4};
      \node [] (3) [below right=0.4cm and 1cm of 1] {3};

      \draw[blue] [-latex] (1) edge (2);
      \draw[blue] [-latex] (2) edge (3);
      \draw[blue] [-latex] (3) edge (4);
      \draw[blue] [-latex] (1) edge (3);
      \draw[blue] [-latex] (4) edge (2);
    \end{tikzpicture}
  }
  \caption{A cyclic graph whose model is defined by a nested
    determinant.}\label{fig:cyclic4}
\end{figure}

\begin{example}
  %Example from \cite{drton:lrt}.  
  Consider the graph $G$ in Figure~\ref{fig:cyclic4}, which was treated
  in \cite{drton:lrt} where a degree 6 polynomial $f$ generating $\mathcal
  I(G)$ was displayed.   This polynomial can be written as the
  following doubly nested determinant: 
  \begin{align*}\label{eq:cyclic_graph}
    f=\begin{vmatrix}
        \left|\Sigma_{34,12}\right| & \left|\Sigma_{34,13}\right| \\
        \begin{vmatrix}
          \left| \Sigma_{12,12}\right| & \left|\Sigma_{12,34}\right|\\
          \left| \Sigma_{14,12}\right| & \left|\Sigma_{14,34}\right|
        \end{vmatrix} &
        \begin{vmatrix}
          \left| \Sigma_{12,13}\right| & \left|\Sigma_{12,34}\right|\\
          \left| \Sigma_{14,13}\right| & \left|\Sigma_{14,34}\right|
        \end{vmatrix}
      \end{vmatrix}.
  \end{align*}
  We will now show that the vanishing of this determinant corresponds
  to the fact that $\{4,2\}$ and $\{2,3\}$ are
  $(\{1,2,4\},\{2,3,4\})$-restricted-trek separated by $(\emptyset,
  \{2\})$. For our derivations we use results from~\cite{DST}, where
  subdeterminants of $\Sigma$ corresponding to graphs with cycles are
  given by rational expressions. The entries of the above matrix are:
  \begin{align*}
    &\left|\Sigma_{34, 12}\right| = \frac{(\lambda_{13} + \lambda_{12}\lambda_{23})\omega_{11}\omega_{44}\lambda_{42}}{(1- \lambda_{23}\lambda_{34}\lambda_{42})^2}=\frac{\mathcal P_{3,1, (\{1,2,3\}, \{1,2,3\})}\mathcal P_{4,2,(\{1,2,4\}, \{2,3,4\})}}{(1-\lambda_{23}\lambda_{34}\lambda_{42})},\\
    &\left|\Sigma_{34,13}\right| =\frac{(\lambda_{13} +
                     \lambda_{12}\lambda_{23})\omega_{11}\omega_{44}\lambda_{42}\lambda_{23}}{(1-
                     \lambda_{23}\lambda_{34}\lambda_{42})^2}
                     =\frac{\mathcal P_{3,1, (\{1,2,3\}, \{1,2,3\})}\mathcal
                     P_{4,3, (\{1,2,4\},
                     \{2,3,4\})}}{(1-\lambda_{23}\lambda_{34}\lambda_{42})},\\
    &                  \begin{vmatrix}
          \left| \Sigma_{12,12}\right| & \left|\Sigma_{12,34}\right|\\
          \left| \Sigma_{14,12}\right| & \left|\Sigma_{14,34}\right|
        \end{vmatrix} = %\frac{(\lambda_{13} + \lambda_{12}\lambda_{23})\omega_{11}\omega_{44}}{(1- \lambda_{23}\lambda_{34}\lambda_{42})^2}\frac{\omega_{11}\omega_{22}}{(1- \lambda_{23}\lambda_{34}\lambda_{42})^2}=
        \Sigma_{14,34}\omega_{11}\omega_{22}\frac1{1-\lambda_{23}\lambda_{34}\lambda_{42}} = 
        \Sigma_{14,34}\omega_{11}\mathcal P_{2,2, (\{1,2\}, \{2,3,4\})},\\
    &               \begin{vmatrix}
          \left| \Sigma_{12,13}\right| & \left|\Sigma_{12,34}\right|\\
          \left| \Sigma_{14,13}\right| & \left|\Sigma_{14,34}\right|
        \end{vmatrix} =
                        \Sigma_{14,34}\omega_{11}\omega_{22}\lambda_{23}\frac1{1-\lambda_{23}\lambda_{34}\lambda_{42}}
                        = \Sigma_{14,34}\omega_{11}\mathcal
                        P_{2,3,(\{1,2\}, \{2,3,4\})}.
                          \end{align*}
        It follows that
        \begin{align*}
          f&\;=\;\Sigma_{14,34}\omega_{11} \mathcal P_{3,1, (\{1,2,3\}, \{1,2,3\})}\begin{vmatrix} \mathcal P_{4,2,(\{1,2,4\}, \{2,3,4\})} & \mathcal P_{4,3, (\{1,2,4\}, \{2,3,4\})}\\
        \mathcal P_{2,2, (\{1,2\}, \{2,3,4\})} & \mathcal P_{2,3,(\{1,2\}, \{2,3,4\})}
      \end{vmatrix}\\
      &\;=\; \frac{\Sigma_{14,34}\omega_{11} \mathcal P_{3,1, (\{1,2,3\}, \{1,2,3\})}
        \mathcal P_{\{4,2\}, \{2,3\}, (\{1,2,4\},
          \{2,3,4\})}}{(1-\lambda_{23}\lambda_{34}\lambda_{42})},
    \end{align*}
            where the last equality follows by
            Lemma~\ref{lem:E_ijF_ij}. The last term in the numerator
            vanishes due to the above mentioned restricted trek
            separation. We remark that the extra term
            $(1-\lambda_{23}\lambda_{34}\lambda_{42})$ in the
            denominator cannot be obtained via the formula given in
            Theorem~\ref{thm:one_nested} (even with the usage of
            geometric series). 
\end{example}

\subsection{Nested determinants with no restricted trek separation}\label{sec:6.3}
We have not found any examples of acyclic mixed graphs $G$ for which
defining equations of the model $\mathcal{M}(G)$ cannot be explained
using restricted trek separation. However, there are other, closely
related models, for which restricted trek separation does not seem to
provide the same combinatorial explanation.

\begin{example}[The pentad] 

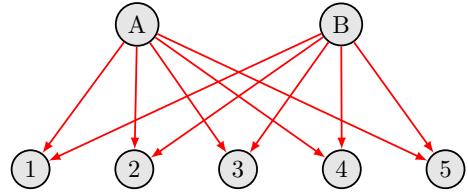
\begin{figure}[t]
  \centering
  \tikzset{
    every node/.style={circle, draw,inner sep=1mm, minimum size=0.55cm, draw, thick, black, fill=gray!20, text=black},
    every path/.style={thick}
  }
  \scalebox{0.85}{
    \begin{tikzpicture}[align=center,node distance=2.2cm]
      \node [] (1) {A};
      \node [] (2) [right=2.5cm of 1] {B};
      \node [] (3) [below left=1.8cm and 1.2cm of 1] {1};
      \node [] (4) [right= 1cm of 3] {2};
      \node [] (5) [right= 1cm of 4]{3};
      \node [] (6) [right= 1cm of 5]{4};
      \node [] (7) [right= 1cm of 6]{5};

      \draw[red] [-latex] (1) edge (3);
      \draw[red] [-latex] (1) edge (4);
      \draw[red] [-latex] (1) edge (5);
      \draw[red] [-latex] (1) edge (6);
      \draw[red] [-latex] (1) edge (7);
      \draw[red] [-latex] (2) edge (3);
      \draw[red] [-latex] (2) edge (4);
      \draw[red] [-latex] (2) edge (5);
      \draw[red] [-latex] (2) edge (6);
      \draw[red] [-latex] (2) edge (7);

%      \draw[blue] [-latex] (2) edge (3);
%      \draw[blue] [-latex] (2) edge (4);
%      \draw[red] [latex-latex] (1) edge (3);
%      \draw[red] [latex-latex] (1) edge (4);
    \end{tikzpicture}
  }
  \caption{The factor analysis model on five nodes with two factors. The vanishing ideal of this model is generated by one degree five polynomial.}\label{fig:pentad}
\end{figure}

Consider a factor analysis model with five normally distributed
observed variables and two latent factors as in Figure~\ref{fig:pentad}. Its defining equation is a degree 5 polynomial in the covariance matrix entries:
\begin{align*}
f_{\text{pentad}} &=\sigma_{12}\sigma_{13}\sigma_{24}\sigma_{35}\sigma_{45} -\sigma_{12}\sigma_{13}\sigma_{25}\sigma_{34}\sigma_{45}- \sigma_{12}\sigma_{14}\sigma_{23}\sigma_{35}\sigma_{45} + \sigma_{12}\sigma_{14}\sigma_{25}\sigma_{34}\sigma_{35}\\
&+\sigma_{12}\sigma_{15}\sigma_{23}\sigma_{34}\sigma_{45}- \sigma_{12}\sigma_{15}\sigma_{24}\sigma_{34}\sigma_{35} + \sigma_{13}\sigma_{14}\sigma_{23}\sigma_{25}\sigma_{45} - \sigma_{13}\sigma_{14}\sigma_{24}\sigma_{25}\sigma_{35}\\
&-\sigma_{13}\sigma_{15}\sigma_{23}\sigma_{24}\sigma_{45} + \sigma_{13}\sigma_{15}\sigma_{24}\sigma_{25}\sigma_{34} - \sigma_{14}\sigma_{15}\sigma_{23}\sigma_{25}\sigma_{34} + \sigma_{14}\sigma_{15}\sigma_{23}\sigma_{24}\sigma_{35}.
\end{align*}
This polynomial can be expressed in nested determinantal form as 
$$f_{\text{pentad}} = \begin{vmatrix} |\Sigma_{23, 45}| & |\Sigma_{25, 34}|\\
|\Sigma_{123, 145}| & |\Sigma_{125, 134}|
\end{vmatrix}.$$
Combinatorially, we can see that all trek systems stemming from the
second row of the matrix are in one-to-one correspondence with the
trek systems from the first row of the matrix, and are obtained by
just adding the trek $1 - 1$. However, we have not been able to
interpret this nested determinant via restricted trek separation. Note
that the mixed graph obtained by latent projection would be a complete
graph with a bidirected edge between each $i,j\in\{1,\ldots,5\}$.
\end{example}

%%% Local Variables:
%%% TeX-master: "nested_dets"
%%% End:

%\input{moreexamples}

\section{Discussion}
\label{sec:discussion}

We conclude by giving a brief review of the results presented in this
paper and then discussing problems for future work.

\subsubsection*{Contributions}

This paper demonstrates the importance of nested determinants as
constraints on covariance matrices in graphical causal/structural
equation models associated to mixed graphs.  Nested determinants are
determinants of matrices whose entries are determinants themselves.
Theorem~\ref{thm:cut-ancestral} shows that a special class of
parentally nested determinants is sufficient for a semialgebraic
description of a class of models that is slightly more general than
the class of ancestral graph models.  Theorem~\ref{thm:one_nested}
provides a framework for explaining the vanishing of more general
nested determinants via trek separation under
restrictions on the vertices that treks may visit on their left and
their right sides.

The examples from Section~\ref{sec:beyond-theorem} depict graphs for
which the conditions of Theorem~\ref{thm:one_nested} do not apply.
While it is often possible to present the defining equations of such models
in terms of (singly) nested determinants, we suggest to instead view the equations as recursively
nested determinants.  In other words, we consider determinants of
smaller matrices whose entries are (recursively) nested determinants.
As we exemplified, such recursively nested determinants may admit an
explanation by restricted trek separation. We further exhibit an
example of a graph with a cycle in which the model is also described
by a recursively nested determinant that admits a restricted trek
separation interpretation.

\subsubsection*{Definition of nested and recursively nested determinants}

Theorem~\ref{thm:one_nested} is concerned with a particular type of
nested determinants where rows and columns of the considered matrix
correspond to vertices of the graph/the given random variables.  This
setup contains as a special case the parentally nested determinants
from Section~\ref{sec:ancestral}.  We anticipate that the nested
determinants considered in Theorem~\ref{thm:one_nested} are
sufficiently general to describe mixed graph models as long as we
allow for a suitable notion of recursive nesting as encountered in the
Examples in Section~\ref{sec:beyond-theorem}.

In a general definition of recursively nested determinants, the
subdeterminants of the original covariance matrix would be recursively
nested determinants with depth of recursion zero.  At depth $k$, we
would take determinants of matrices whose entries are recursively
nested determinants of depth at most $k-1$.  However, it would be
desirable to constrain this construction such that for any recursively
nested determinant the rows and columns of the considered matrix can
be put in correspondence with two sets of vertices.  These sets of
vertices may then admit a restricted trek separation.

\begin{problem}
  Develop a notion of recursively nested determinants for which row
  and column indices are in correspondence with graph vertices.  The
  depth of recursion should be such that the subdeterminants of the
  original matrix are the only recursively nested determinants of
  depth 0.  The recursively nested determinants of depth 1 should be
  of the type encountered in Theorem~\ref{thm:one_nested}.
\end{problem}

% We record this formally.
%
% \begin{definition}
%   Let $\Sigma$ be an $m\times m$ matrix of indeterminates.  Let
%   $f\in\mathbb{R}[\Sigma]$ be a polynomial in the entries of $\Sigma$.
%   Then $f$ is a nested determinant of order $k$ if there
%   exist distinct elements $a_1,\dots,a_k\in[m]$ and distinct elements
%   $b_1,\dots,b_k\in[m]$ and subsets
%   $A_1,\dots,A_k,B_1,\dots,B_k,C_1,\dots,C_k,D_1,\dots,D_k\subset[m]$
%   such that
%   \[
%     f \;=\; \left|\left(\right)_{i,j\in[k]} \right|
%   \]
% \end{definition}

\subsubsection*{Tian decomposition}

In the introduction, after Example~\ref{ex:intro:verma}, we mentioned
Tian's graph decomposition, which may yield subgraphs whose covariance
matrix can be rationally identified from the covariance matrix for the
original graph $G$.  Trek separation in the subgraph then gives a
rational constraint.  Clearing denominators yields a polynomial in
$\mathcal I(G)$.

\begin{conjecture}
  Trek separation relations in subgraphs obtained from Tian's graph
  decomposition correspond to recursively nested determinants.
\end{conjecture}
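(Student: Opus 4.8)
The plan is to make the three-step recipe recalled just before the conjecture quantitatively precise: for each subgraph $G'$ produced by Tian's decomposition, exhibit the rational map $\phi\colon \mathcal{M}(G)\to \mathcal{M}(G')$, $\Sigma\mapsto\Sigma'$, that performs the identification, show that a trek-separation determinant $\lvert\Sigma'_{A,B}\rvert$ pulled back along $\phi$ and cleared of denominators is a recursively nested determinant, and note that it lies in $\mathcal{I}(G)$ because it vanishes generically on $\mathcal{M}(G)$. The starting observation is that, in the Gaussian linear setting, Tian's identification of the $Q$-factor of a district is nothing but passage to a conditional Gaussian distribution, and the covariance matrix of a conditional Gaussian is a Schur complement of $\Sigma$. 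By the Schur-complement/determinant identity already used in~\eqref{eq:schur}, namely $\bigl(\sigma_{rc}-\Sigma_{r,C}\Sigma_{C,C}^{-1}\Sigma_{C,c}\bigr)\lvert\Sigma_{C,C}\rvert=\lvert\Sigma_{C\cup\{r\},C\cup\{c\}}\rvert$, each entry of such a conditional covariance matrix is, after multiplication by a principal subdeterminant, a subdeterminant of the matrix being conditioned. Iterating this over the districts of (the latent projection of) $G$ in a topological order $D_1,\dots,D_m$, each entry $\sigma'_{ab}$ of $\Sigma'$ becomes a ratio of the form $\lvert\Sigma^{(t-1)}_{R_{ab}\cup\{a\},\,R_{ab}\cup\{b\}}\rvert\big/\lvert\Sigma^{(t-1)}_{R_{ab},R_{ab}}\rvert$, where $\Sigma^{(t-1)}$ is the matrix obtained after the first $t-1$ district-conditioning steps, is itself built from recursively nested determinants of $\Sigma$, and $R_{ab}$ is a conditioning set determined by the district structure and the chosen order.

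First I would set up the topological ordering of districts and write down explicitly, either via Tian's formula or -- equivalently -- via the linearized substitution argument of Section~\ref{sec:ancestral} applied to~\eqref{eq:I-L.Sigma.I-L}, how $\Sigma'$ is obtained as an iterate of Schur complements of $\Sigma$; the point is that Tian's decomposition makes a sufficient collection of columns of $\Lambda$ rationally identifiable, and every identified entry that one substitutes is a ratio of recursively nested determinants, just as the substitution of~\eqref{eq:ancestral} into~\eqref{eq:2} produced~\eqref{eq:4}. Second, given a trek-separation certificate for $G'$, Theorem~\ref{thm:trek_separation} applied to $G'$ gives $\lvert\Sigma'_{A,B}\rvert=0$ on $\mathcal{M}(G')\supseteq\phi(\mathcal{M}(G))$; expanding this determinant and pulling the common denominators out of the rows and out of the columns -- the manipulation used in the proofs of Theorem~\ref{thm:one_nested} and Lemma~\ref{lem:swapping} -- rewrites $\lvert\Sigma'_{A,B}\rvert$ times a product of principal subdeterminants of the intermediate matrices as a single recursively nested determinant whose row and column index sets are in bijection with $A$ together with the occurring row conditioning sets and $B$ together with the occurring column conditioning sets, hence with graph vertices. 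Third, since the extracted denominators are generically nonzero on $\mathcal{M}(G)$, the resulting recursively nested determinant lies in $\mathcal{I}(G)$, giving the conjectured correspondence.

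The hard part will be the bookkeeping of the recursion, in particular showing that the common-factor extraction closes up at every level. For the row-by-row (respectively column-by-column) factoring in the second step to go through, the conditioning set $R_{ab}$ attached to the $(a,b)$ entry must depend on $a$ alone (respectively on $b$ alone), or at least split as $R_{ab}=P_a\uplus Q_b$ in the manner required by the hypotheses of Theorem~\ref{thm:one_nested}; Tian's conditioning sets, which combine district history with parent sets, need not split this cleanly for an arbitrary mixed graph, so one may be forced either to restrict to a subclass of graphs or -- preferably -- to first settle the open Problem stated above, i.e.\ to give a robust definition of recursively nested determinants with vertex-indexed rows and columns together with a generalization of Theorem~\ref{thm:one_nested} (a ``restricted trek separation for iterated Schur complements''), of which the present conjecture would then be the Tian-decomposition instance. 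A lesser issue is the passage to cyclic graphs, where $\Sigma'$ becomes a ratio of self-avoiding trek-flow expressions rather than trek-system sums; restricting attention to acyclic $G$, the usual setting for Tian's decomposition, sidesteps this, and a cyclic extension would proceed as in Section~\ref{sec:6.2} using the results of~\cite{DST}.
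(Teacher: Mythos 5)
You should first be clear about the status of the statement you are addressing: in the paper it is a \emph{conjecture} in the Discussion section, with no proof offered, and the Problem stated immediately before it records that even the definition of ``recursively nested determinant'' to which the conjecture refers has not yet been fixed. So there is no paper proof to compare against; your text is a research programme, not a proof. As a programme it follows precisely the route the paper sketches in the paragraph preceding the conjecture (rationally identify the subgraph covariance, apply Theorem~\ref{thm:trek_separation} there, clear denominators), and your observation that Tian's $Q$-factor identification is, in the Gaussian case, an iterated Schur complement whose entries become subdeterminants after multiplication by $\lvert\Sigma_{C,C}\rvert$ via \eqref{eq:schur} is the right starting point and consistent with the authors' intent.

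The genuine gap is the one you flag yourself, and it is not peripheral bookkeeping but the entire mathematical content of the conjecture. Two things are missing. First, the target class is undefined: without a precise recursive definition of ``recursively nested determinant'' (the paper's open Problem), the assertion ``the cleared-denominator polynomial is a recursively nested determinant'' is not yet a provable statement; clearing denominators always yields \emph{some} polynomial in $\mathcal{I}(G)$, but that much is already known from Tian's work and is not what the conjecture claims. Second, even granting a candidate definition, the factor-extraction step does not obviously close up: after one district-conditioning step the entries of $\Sigma'$ are ratios $\lvert\Sigma_{R_{ab}\cup\{a\},R_{ab}\cup\{b\}}\rvert/\lvert\Sigma_{R_{ab},R_{ab}}\rvert$ whose conditioning sets $R_{ab}$, in Tian's construction, depend on the topological positions of both $a$ and $b$ within and before the district; the row/column factoring used in Theorem~\ref{thm:one_nested} and Lemma~\ref{lem:swapping} requires these denominators to split into a row-only factor times a column-only factor, and you give no argument that this holds for general districts. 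The examples in Section~\ref{sec:beyond-theorem} show that such single-level factoring can already fail for four-node graphs, forcing a deeper level of nesting whose general structure is exactly what the open Problem asks for. Until the definition is settled and the closure of the factoring under iteration is proved (or refuted), the conjecture remains open; your proposal should be read as a plausible plan whose crucial step is deferred to that open problem rather than resolved.
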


\subsubsection*{Vanishing nested determinants}

The results we have given so far are sufficient conditions for the vanishing
of nested determinants.

\begin{problem}\label{prob:graphical_conditions}
  Using restricted trek separation, obtain graphical conditions that
  are necessary for the vanishing of the nested determinants from
  Theorem~\ref{thm:one_nested}.
\end{problem}

If a characterization of the vanishing of nested determinants is
established, it can be used to decide model equivalence questions.
More generally, it would be desirable to obtain conditions, sufficient
and necessary, for the vanishing of recursively nested determinants.
We formulate a ``hopeful'' conjecture for acyclic mixed graphs.

\begin{conjecture}\label{conj:equiv}
  The equality of two models $\mathcal M(G)$ and $\mathcal M(H)$ can
  be decided by comparing restricted trek separation relations in $G$
  and $H$.
\end{conjecture}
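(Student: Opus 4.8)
The plan is to follow the template that works for directed acyclic graphs, where $\mathcal{M}(G)=\mathcal{M}(H)$ is shown equivalent to $G$ and $H$ having the same $d$-separations: the model $\mathcal{M}(G)$ is exactly the set of positive definite matrices at which all conditional independence determinants of $G$ vanish, $d$-separation decides which such determinants vanish, and hence the model is determined by, and conversely recovers, the $d$-separation relations. Here the analogous scaffolding would replace conditional independence determinants by (recursively) nested determinants and $d$-separation by restricted trek separation. Since for mixed graphs $\mathcal{M}(G)$ need not be cut out within $\mathit{PD}_V$ by polynomial equations alone --- the implication $\sigma_{13}=0\Rightarrow\sigma_{14}=0$ in Example~\ref{ex:intro:iv} is not equational --- two layers have to be handled: the ideal $\mathcal{I}(G)$ (equivalently the Zariski closure $\overline{\mathcal{M}(G)}$), and the additional semialgebraic constraints that carve $\mathcal{M}(G)$ out of $\mathit{PD}_V\cap\mathcal{V}_{\mathcal{I}(G)}$.

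For the equational layer I would aim at two statements. The first is a completeness result: the vanishing ideal $\mathcal{I}(G)$ of any acyclic mixed graph is generated by recursively nested determinants of the kind in Theorem~\ref{thm:one_nested} and the recursive variants illustrated in Section~\ref{sec:beyond-theorem}. The second is a graphical characterization: such a determinant, built from prescribed vertex sets, lies in $\mathcal{I}(G)$ if and only if one of a list of restricted trek separations among the associated sets holds in $G$ --- the ``if'' direction being Corollary~\ref{cor:restricted_treks} together with its recursive analogue, and the ``only if'' direction being the content of Problem~\ref{prob:graphical_conditions}. Granting both, the ideal $\mathcal{I}(G)$ is a function of, and is recoverable from, the family of restricted trek separation relations of $G$ in the purely combinatorial sense of Definition~\ref{dfn:restricted_treks}; hence $\mathcal{I}(G)=\mathcal{I}(H)$ precisely when $G$ and $H$ share all restricted trek separation relations.

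To pass from closures to the models themselves, I would show that the extra semialgebraic constraints are likewise dictated by restricted trek separation: a proportionality of covariance entries such as $\sigma_{14}=\lambda_{34}\,\sigma_{13}$, which produces the implication in Example~\ref{ex:intro:iv}, reflects that every restricted trek between certain vertices is forced through a fixed vertex, so that one restricted-trek sum is a scalar multiple of another; a systematic treatment of such phenomena would let one reconstruct $\mathcal{M}(G)$ as a semialgebraic set from the restricted trek separations of $G$, giving both directions of the conjecture and making the comparison decidable, since for fixed vertex sets a restricted trek separation is a finite graph property. The principal obstacle, by a large margin, is the completeness step: at present there is not even a definition of recursively nested determinants (its development is itself posed as an open problem), the examples in Section~\ref{sec:beyond-theorem} show that the swapping property may fail and that tracking which vertex sets are peeled off at each level of nesting is delicate, and the necessity half of the graphical characterization (Problem~\ref{prob:graphical_conditions}) is unresolved; a straightforward induction on recursion depth is unlikely to suffice. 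A conceptually separate difficulty is the semialgebraic refinement, for which no combinatorial theory currently exists and which may demand notions beyond restricted trek separation as presently defined.
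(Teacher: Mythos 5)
The statement you are addressing is stated in the paper only as a conjecture (the authors explicitly call it a ``hopeful'' one); the paper contains no proof of it, so there is no argument of theirs to compare yours against. What you have written is not a proof but a reduction of the conjecture to other statements the paper itself leaves open: your ``completeness result'' is essentially the paper's final conjecture that $\mathcal I(G)$ is generated by recursively nested determinants (which in turn presupposes a definition of recursive nesting that the paper only poses as a problem to be developed), and the ``only if'' half of your graphical characterization is exactly Problem~\ref{prob:graphical_conditions}, also open. You acknowledge this, which is to your credit, but even the reduction itself has gaps. First, knowing that membership of each individual nested determinant in $\mathcal I(G)$ is decided by restricted trek separation does not by itself make $\mathcal I(G)$ ``a function of'' the separation relations: the candidate determinants are built from vertex sets, parent sets, and the swapping-property data of $G$, so two graphs with identical separation relations could a priori present different families of candidate generators, and one would still need an argument that the resulting ideals coincide. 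Second, the pentad in Section~\ref{sec:6.3} is a concrete warning for the completeness step: there the generator is a nested determinant that the authors were unable to interpret via restricted trek separation at all.

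The semialgebraic layer you add is a real issue that the conjecture as stated arguably glosses over (equality of models is equality of semialgebraic sets, not of vanishing ideals), and your observation that the implication-type constraints of Example~\ref{ex:intro:iv} also have a trek-theoretic flavor is a sensible direction. But no part of that layer is developed in the paper or in your proposal, so it too remains a gap rather than a step of a proof. In short: your approach is a plausible research programme, consistent with the open problems the paper poses alongside the conjecture, but every load-bearing step is either an open problem or an unexamined assertion, so the conjecture remains unproved.
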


In all examples of graphs $G$ we inspected, the vanishing ideal
$\mathcal{I}(G)$ is in fact generated by nested or recursively nested
determinants.  

\begin{conjecture}
  The vanishing ideal $\mathcal I(G)$ can always be generated by
  recursively nested determinants.
\end{conjecture}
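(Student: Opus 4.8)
The statement is posed as a conjecture, so what follows is really a program rather than a proof. The overall approach would be an induction on the number of vertices $|V|$, staying inside the class of acyclic mixed graphs via latent projection and performing a Tian-style district decomposition \cite{tian:pearl:2002} at each step. First, for a fixed acyclic mixed graph $G$, pick a vertex $p$ that is a sink of the directed part and let $D$ be the district (connected bidirected component) of $p$ inside the subgraph induced by the ancestors of $p$. If $D$ does not exhaust those ancestors, the parametrization factorizes compatibly with the decomposition, and one expects $\mathcal I(G)$ to be generated by $\mathcal I(G[V\setminus\{p\}])$ — supplied by the induction hypothesis — together with a controlled family of new generators attached to $D$. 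If $D$ is everything, one is in a genuinely ``globally identifiable''-type situation and must exhibit the new generators directly; this is exactly where restricted trek separation and the machinery of Sections~\ref{sec:nested-determinants}--\ref{sec:beyond-theorem} enter.

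The second and more technical step is to show that the constraints produced this way are recursively nested determinants. This presupposes settling the open problem, stated above, of giving a workable definition of recursively nested determinants whose rows and columns stay indexed by vertices. Granting such a definition, one tracks the substitutions made when rationally identifying the $p$-th column of $\Lambda$ from $\Sigma$, e.g.\ via the half-trek criterion \cite{foygel:draisma:drton:2012}, or, when full identification fails, the overdetermined linear system obtained as in the proof of Proposition~\ref{prop:ancestral-nested-det}. Each substitution of an already-identified column into~\eqref{eq:I-L.Sigma.I-L}, together with each use of the Schur-complement identity~\eqref{eq:schur}, wraps the previously built determinants inside a new determinant; iterating this is precisely the recursive nesting seen in the examples of Figure~\ref{fig:ommen}. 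One then checks, through the trek-rule bookkeeping underlying Theorem~\ref{thm:one_nested} and its recursive analogue, that the resulting polynomials lie in $\mathcal I(G)$.

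The main obstacle is the generation (completeness) direction: proving that the recursively nested determinants collected above generate all of $\mathcal I(G)$, not merely a strict subideal. In the (nearly) ancestral case, Theorem~\ref{thm:cut-ancestral} achieves this by matching the overdetermined system against a full-column-rank criterion (Lemma~2 of~\cite{MR2816341}); the hope is an analogous rank dichotomy in general, so that the recursively nested determinants cut out exactly $\mathit{PD}_V\cap\mathcal M(G)$. Carrying this through would require, in particular, a converse to Theorem~\ref{thm:one_nested} in the recursive setting (Problem~\ref{prob:graphical_conditions}) and careful control of the primary components of the generated ideal that do not meet the positive-definite cone — a phenomenon already visible for the DAG following Proposition~\ref{prop:ancestral-nested-det}. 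A reasonable first milestone is the conjecture for all acyclic mixed graphs in which every district arising in the Tian decomposition is itself identifiable by the half-trek criterion; beyond that class — and, as the cyclic example and the pentad of Section~\ref{sec:6.3} caution, once one leaves acyclic graphs or models that are not literally latent projections — the correct combinatorial notion may no longer be restricted trek separation, and genuinely new ideas are likely to be needed.
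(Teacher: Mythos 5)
You have correctly recognized that this statement is left as an open conjecture in the paper; the authors offer no proof, and your proposal does not supply one either---it is a research program whose stages coincide almost exactly with the open problems the paper itself poses in Section~\ref{sec:discussion}: the need for a precise, vertex-indexed definition of recursively nested determinants is stated there as an open problem, the link to Tian's decomposition is itself only a conjecture in the paper, and the completeness direction is Problem~\ref{prob:graphical_conditions}. Since the conjecture is not even fully precise until the definitional problem is settled (the paper deliberately leaves ``recursively nested determinant'' as an informal notion illustrated by the examples of Section~\ref{sec:beyond-theorem}), no argument at this stage can constitute a proof.

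The concrete gaps in your program, beyond the missing definition, are threefold. First, the inductive step asserts without justification that $\mathcal I(G)$ is generated by $\mathcal I(G[V\setminus\{p\}])$ together with generators attached to the district of $p$; the paper establishes nothing of this sort, and controlling how the lifted generators of an induced subgraph's ideal combine with ``new'' constraints is precisely where the difficulty lies. Second, the completeness direction is proved in the paper only for globally identifiable graphs all of whose non-terminal vertices are ancestral (Theorem~\ref{thm:cut-ancestral}), via a full-column-rank argument that has no known analogue once columns of $\Lambda$ fail to be rationally identifiable; your hoped-for ``rank dichotomy'' is exactly the missing ingredient, not a consequence of anything established. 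Third, even membership of the candidate generators in $\mathcal I(G)$ in the recursive setting relies on an analogue of Theorem~\ref{thm:one_nested} that the paper does not state or prove---the examples of Figure~\ref{fig:ommen} are handled by ad hoc factorizations rather than a general theorem, and the cyclic example shows that extra denominators can appear that the acyclic machinery does not produce. Your proposal is a faithful elaboration of the authors' own roadmap, but each of its stages is an open problem rather than an established step.
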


\subsubsection*{Computing restricted trek separation}
Assuming that restricted trek separation is what characterizes equivalence classes of models, as suggested by Conjecture~\ref{conj:equiv}, we may need to either 
use the graphical conditions from Problem~\ref{prob:graphical_conditions}  or to be able to 
compute restricted trek separation in order to find equivalent graphs.
\begin{problem} Design computationally efficient algorithms for checking/finding restricted trek separations.
\end{problem}

\subsubsection*{Feedback cycles}

Our focus was on acyclic mixed graphs, for which determinants of the
covariance matrix have expansions in terms of systems of treks without
sided intersection.  However, as the example of
Figure~\ref{fig:cyclic4} shows, (recursively) nested determinants are
also relevant for cyclic graphs.

\begin{problem}
  Generalize Theorem~\ref{thm:one_nested} to the general possibly
  cyclic case.
\end{problem}

Of course, all problems mentioned above also pertain 
to graphs with cycles.

\section*{Acknowledgements}

This work started during the 2016 Mathematics Research Communities
research conference on Algebraic Statistics.  Mathias Drton and Luca
Weihs were supported by the U.S.~National Science Foundation under
Grant No.~DMS 1712535. Elina Robeva was supported by a U.S.~National
Science Foundation Postdoctoral Fellowship No.~DMS 1703821.

%\begin{itemize}
%\item Note that we only have sufficient but not necessary conditions
%  for vanishing of nested determinants.
%\item Connection to Tian decomposition?  
%\end{itemize}
%
%
%\subsection{Open questions}
%\begin{problem} Can restricted trek separation always be explained as a factor of a nested determinant?
%\end{problem}
%\begin{problem} Can restricted trek separation explain the whole ideal $\mathcal I(G)$?
%\end{problem}
%\subsection{Nested determinants in other latent variable graphical models}
%Here we mention the pentad.

%%% Local Variables:
%%% TeX-master: "nested_dets"
%%% End:

\appendix
\allowdisplaybreaks

\section{Proofs for Section~\ref{sec:restricted-treks}: restricted trek separation}
\label{sec:restricted-treks-appendix}

This section is devoted to proving Theorem~\ref{thm:main}.  The proof
proceeds through rather minor modifications of the ideas of~\cite{STD}.

\subsection{Proof of Theorem~\ref{thm:main}({\em i}) for directed
  acyclic graphs} We begin by proving Theorem~\ref{thm:main} in the
case when $G$ is a directed acyclic graph (DAG). We extend it to acyclic mixed
graphs in the next section.  We first record the following
combinatorial interpretation of the entries of
$\left(I - \Lambda_{C, C}\right)^{-1}$ for a subset of vertices $C$.
\smallskip

\begin{proposition} Let $\mathcal P(i, j, C)$ be the set of directed
  paths from $i\in C$ to $j\in C$ that only use vertices from a subset
  $C\subseteq V$ in the directed  graph $G$. For each path $P$, define
  $\lambda^P = \prod_{i\to j\in P}\lambda_{ij}$. Then
$$\left[\left(I - \Lambda_{C, C}\right)^{-1}\right]_{ij} = \sum_{P\in\mathcal P(i,j, C)}\lambda^P.$$
\end{proposition}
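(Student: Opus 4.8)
The plan is to expand the matrix inverse as a (finite) Neumann series and then read off the combinatorial meaning of the matrix powers. Since $G$ is acyclic, the induced subgraph on $C$ is acyclic as well, so we may enumerate the vertices of $C$ in a topological order; in this order $\Lambda_{C,C}$ is strictly upper triangular and hence nilpotent, with $\left(\Lambda_{C,C}\right)^k = 0$ for $k\ge |C|$. Consequently $I-\Lambda_{C,C}$ is invertible and
\[
  \left(I-\Lambda_{C,C}\right)^{-1} \;=\; \sum_{k=0}^{|C|-1}\left(\Lambda_{C,C}\right)^k.
\]

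Next I would expand the $(i,j)$ entry of the $k$-th power as a sum over walks of length $k$ confined to $C$:
\[
  \left[\left(\Lambda_{C,C}\right)^k\right]_{ij} \;=\; \sum_{\substack{i=v_0,v_1,\dots,v_k=j\\ v_0,\dots,v_k\in C}}\ \prod_{m=1}^k \lambda_{v_{m-1}v_m}.
\]
A term in this sum is nonzero only when $v_{m-1}\to v_m\in\mathcal{D}$ for every $m$, so only directed walks inside $C$ contribute. The key observation is that, because the subgraph induced on $C$ is acyclic, every such directed walk is in fact a directed path: if a vertex were repeated, the walk would contain a directed cycle within $C$, a contradiction. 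Conversely, each directed path $P\in\mathcal{P}(i,j,C)$ contributes exactly once, namely to the term with $k$ equal to the length of $P$, with weight $\lambda^P$; in particular, for $i=j$ the trivial path of length $0$ corresponds to the $1$ coming from the $k=0$ term.

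Finally I would sum over $k$ from $0$ to $|C|-1$. Since a directed path in $C$ visits at most $|C|$ vertices and hence has length at most $|C|-1$, this range of $k$ captures every path of $\mathcal{P}(i,j,C)$ exactly once, yielding $\left[\left(I-\Lambda_{C,C}\right)^{-1}\right]_{ij}=\sum_{P\in\mathcal{P}(i,j,C)}\lambda^P$. There is no substantial obstacle here; the only point requiring care is the walk-equals-path reduction, which is precisely where acyclicity of $G$ restricted to $C$ enters, together with checking that the $i=j$ case matches the identity contribution.
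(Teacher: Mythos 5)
Your proof is correct. The paper itself gives no argument here: it simply observes that the claim follows from Proposition 3.1 of \cite{STD} applied to the induced subgraph of $G$ on $C$. What you have written is precisely the standard proof of that cited fact, carried out from scratch: nilpotence of $\Lambda_{C,C}$ via a topological order on the acyclic induced subgraph, the finite Neumann series $\left(I-\Lambda_{C,C}\right)^{-1}=\sum_{k=0}^{|C|-1}\left(\Lambda_{C,C}\right)^k$, the identification of the entries of $\left(\Lambda_{C,C}\right)^k$ with weighted directed walks of length $k$ confined to $C$, and the walk-equals-path reduction from acyclicity. All steps check out, including the two points that need care: the $i=j$ case where the trivial path accounts for the identity contribution, and the bound $|C|-1$ on path lengths ensuring the truncated series misses nothing. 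The only difference from the paper is that your version is self-contained where the paper defers to an external reference; mathematically the two are the same argument.
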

\begin{proof} The claim follows from Proposition 3.1
  in~\cite{STD} if we consider the induced subgraph of $G$ with vertex
  set $C$.
  % While Proposition 3.1 in~\cite{STD} is written for
  % directed acyclic graphs, the claim also holds in the  general case
  % where we obtain a power series; see Section 3.2 in~\cite{STD}.
\end{proof}
\smallskip

When $G$ is a directed graph, the error covariance matrix $\Omega$ is
diagonal. This allows us to show the following lemma.  We emphasize
that in our discussion a determinant is zero if it is identically zero
as a polynomial/function.

\begin{lemma}\label{lem:3.2} In a directed graph consider sets of
  vertices $A\subseteq P, B\subseteq Q$ with $|A| =|B|$. Then
  $\det \Sigma^{(P,Q)}_{A, B}=0$ if and only if for
  every set $S\subseteq P\cap Q$ with $|S| = |A|=|B|$ either
  $\det\left(((I-\Lambda)_{P,P})^{-1}\right)_{S, A} = 0$ or
  $\det\left(((I-\Lambda)_{Q,Q})^{-1}\right)_{S,B} = 0$.
\end{lemma}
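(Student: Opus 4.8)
\emph{Plan.} The idea is to turn $\Sigma^{(P,Q)}_{A,B}$ into a triple matrix product with a diagonal middle factor and then apply the Cauchy--Binet formula. Write $M = \bigl((I-\Lambda)_{P,P}\bigr)^{-1}$, $N = \bigl((I-\Lambda)_{Q,Q}\bigr)^{-1}$, and $T = P\cap Q$. Because $G$ is a directed graph, $\Omega$ is diagonal, so $\Omega_{P,Q}$ is supported on the positions $(k,k)$ with $k\in T$. Reading off the $(a,b)$ entry of $\Sigma^{(P,Q)} = M^{T}\Omega_{P,Q}N$ as a sum over the common index $k$, one gets
\[
  \Sigma^{(P,Q)}_{A,B} \;=\; (M_{T,A})^{T}\,\diag(\omega_{kk}:k\in T)\,N_{T,B},
\]
a product of an $|A|\times|T|$ matrix, a nonsingular diagonal $|T|\times|T|$ matrix, and a $|T|\times|B|$ matrix. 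First I would verify this identity by a direct entrywise computation, using only that $\omega_{pq}=\delta_{pq}\omega_{pp}$ for $p\in P$, $q\in Q$.

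Next I would apply Cauchy--Binet to this product (absorbing the diagonal factor into one outer matrix, which merely rescales its columns). Since the determinant of the submatrix of $M^{T}$ on rows $A$ and columns $S$ equals $\det(M_{S,A})$, and likewise for $N$, this yields
\[
  \det\Sigma^{(P,Q)}_{A,B} \;=\; \sum_{\substack{S\subseteq T\\ |S|=|A|}} \Bigl(\textstyle\prod_{s\in S}\omega_{ss}\Bigr)\,\det(M_{S,A})\,\det(N_{S,B}).
\]
If $|P\cap Q| < |A|$ the sum is empty, so $\det\Sigma^{(P,Q)}_{A,B}=0$ and there is no admissible $S$; both sides of the claimed equivalence then hold vacuously, and I may assume $|T|\ge|A|$ from here on.

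Finally I would read the equivalence off this expansion. Because $G$ is acyclic, $\Lambda_{P,P}$ and $\Lambda_{Q,Q}$ are nilpotent, so $M = \sum_{\ell\ge0}\Lambda_{P,P}^{\ell}$ and $N = \sum_{\ell\ge0}\Lambda_{Q,Q}^{\ell}$ are matrices of polynomials in the entries of $\Lambda$ alone; in particular $\det(M_{S,A})$ and $\det(N_{S,B})$ are free of the variables $\omega_{ij}$. Hence the displayed right-hand side is a polynomial in $(\Lambda,\Omega)$ whose expansion in the pairwise distinct squarefree monomials $\prod_{s\in S}\omega_{ss}$ has coefficient $\det(M_{S,A})\det(N_{S,B})$ for the term indexed by $S$. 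Thus $\det\Sigma^{(P,Q)}_{A,B}$ is the zero polynomial/function if and only if $\det(M_{S,A})\det(N_{S,B})=0$ for every $S\subseteq T$ with $|S|=|A|=|B|$; and since the polynomial ring in the entries of $\Lambda$ is an integral domain, this holds if and only if for every such $S$ either $\det\bigl(((I-\Lambda)_{P,P})^{-1}\bigr)_{S,A}=0$ or $\det\bigl(((I-\Lambda)_{Q,Q})^{-1}\bigr)_{S,B}=0$, which is exactly the statement of the lemma.

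\emph{Main difficulty.} There is no serious obstacle; the argument is essentially bookkeeping. The two points needing care are (a) setting up the Cauchy--Binet indexing so that the sum runs precisely over subsets $S\subseteq P\cap Q$ of size $|A|$, and (b) justifying the passage from ``the determinant vanishes identically'' to ``every coefficient vanishes,'' which rests on $M$ and $N$ containing no $\omega$-variables --- that is, on $G$ being acyclic so that $(I-\Lambda)_{C,C}$ is invertible over the polynomial ring in the $\lambda$'s.
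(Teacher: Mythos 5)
Your proposal is correct and follows essentially the same route as the paper's proof: Cauchy--Binet applied to $((I-\Lambda)_{P,P})^{-T}\Omega_{P,Q}((I-\Lambda)_{Q,Q})^{-1}$, diagonality of $\Omega$ collapsing the index sets to subsets $S\subseteq P\cap Q$, and the observation that the distinct squarefree monomials $\prod_{s\in S}\omega_{ss}$ appear in exactly one term each, so the determinant vanishes identically iff every coefficient $\det(M_{S,A})\det(N_{S,B})$ does. Your write-up is if anything slightly more careful than the paper's (handling the case $|P\cap Q|<|A|$ and spelling out why the coefficients are $\omega$-free), but it is the same argument.
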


\begin{proof}
Since $\Sigma^{(P,Q)} = ((I - \Lambda)_{P, P})^{-T}\Omega_{P,
  Q}((I-\Lambda)_{Q, Q})^{-1}$, we have
$$\Sigma^{(P,Q)}_{A, B} = \left(((I - \Lambda)_{P, P})^{-T}\right)_{A, P}\Omega_{P, Q}\left(((I-\Lambda)_{Q, Q})^{-1}\right)_{Q, B}.$$
By  the Cauchy-Binet theorem,
$$\det \Sigma^{(P,Q)}_{A, B} = \sum_{S\subseteq P, R\subseteq Q}\det(((I-\Lambda)_{P, P})^{-T})_{A, S}\det(\Omega_{S, R})\det(((I-\Lambda)_{Q, Q})^{-1})_{R,B},$$
where the sum runs over $S$ and $R$ of cardinality $|A|=|B|$.  As $\Omega$ is diagonal, we obtain that
\begin{align*}
  \det \Sigma^{(P,Q)}_{A, B}
  &= \sum_{S\subseteq P\cap Q}\det(((I-\Lambda)_{P, P})^{-T})_{A, S}\det(\Omega_{S, S})\det(((I-\Lambda)_{Q, Q})^{-1})_{S,B}\\
  &=\sum_{S\subseteq P\cap Q}\det(((I-\Lambda)_{P, P})^{-1})_{S,A}
    \det(((I-\Lambda)_{Q, Q})^{-1})_{S,B}\prod_{s\in S}\omega_{s,s}.
\end{align*}
Since each monomial $\prod_{s\in S}\omega_{s,s}$ appears only in one term in this expansion, the result follows.
\end{proof}

We now recall the Gessel-Viennot-Lindstr\"om lemma. 

\begin{lemma}[Gessel-Viennot-Lindstr\"om
  lemma]\label{lem:GesselViennot} Suppose $G$ is a DAG with vertex set $\{1,\dots,m\}$. Let
  $A, B\subseteq \{1,\dots,m\}$ be such that $|A|=|B|=\ell$. Then
$$\det((I-\Lambda)^{-1})_{A,B} = \sum_{S\in\mathcal N(A,B)}(-1)^{S}\lambda^{S},$$
where $\mathcal N(A,B)$ is the set of all nonintersecting systems of
$\ell$ directed paths in $G$ from $A$ to $B$, and $(-1)^{S}$ is the
sign of the induced permutation of elements from $A$ to $B$. In
particular, $\det((I-\Lambda)^{-1})_{A,B}=0$ if and only if every
system of $\ell$ directed paths from $A$ to $B$ has two paths which
share a vertex.
\end{lemma}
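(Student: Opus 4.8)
\emph{Proof proposal.}
The plan is the classical one: expand the determinant as a signed sum over all tuples of directed paths joining $A$ to $B$, and then cancel, by a sign-reversing involution, every tuple in which two of the paths share a vertex. First I would recall the entrywise path expansion: since $G$ is acyclic, $(I-\Lambda)^{-1}=\sum_{k\ge 0}\Lambda^{k}$ is a finite sum, and the combinatorial interpretation of $\big(I-\Lambda_{C,C}\big)^{-1}$ recorded above, taken with $C=V$, gives $\big[(I-\Lambda)^{-1}\big]_{ij}=\sum_{P\in\mathcal P(i,j,V)}\lambda^{P}$, the sum over all directed paths from $i$ to $j$ in $G$, with the one-vertex path from $i$ to $i$ contributing the empty product $1$. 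Writing $A=\{a_{1},\dots,a_{\ell}\}$ and $B=\{b_{1},\dots,b_{\ell}\}$ in the fixed vertex order, the Leibniz formula yields
\[
  \det\big((I-\Lambda)^{-1}\big)_{A,B}=\sum_{\pi}\operatorname{sgn}(\pi)\prod_{i=1}^{\ell}\ \sum_{P_{i}\in\mathcal P(a_{i},b_{\pi(i)},V)}\lambda^{P_{i}},
\]
the outer sum being over permutations $\pi$ of $\{1,\dots,\ell\}$. Distributing the products turns this into a sum over all $\ell$-tuples $S=(P_{1},\dots,P_{\ell})$ of directed paths with $P_{i}$ running from $a_{i}$ to $b_{\pi(i)}$; since $\pi$ is recovered from $S$ as its induced permutation, I index the sum directly by such path systems $S$, each weighted by $(-1)^{S}\lambda^{S}$ where $\lambda^{S}=\prod_{i}\lambda^{P_{i}}$.

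Next I would define the cancelling involution on the set $\mathcal I(A,B)$ of path systems $S$ in which at least two paths share a vertex. Fix the lexicographic order on unordered pairs of indices. Given $S\in\mathcal I(A,B)$, let $\{i,i'\}$ with $i<i'$ be the least pair for which $P_{i}$ and $P_{i'}$ share a vertex, let $v$ be the vertex on $P_{i}$ nearest to $a_{i}$ that also lies on $P_{i'}$, and form $S'$ by exchanging the portions of $P_{i}$ and $P_{i'}$ strictly after $v$, leaving the other $\ell-2$ paths untouched. Then $S'\in\mathcal I(A,B)$ (the pair $\{i,i'\}$ still shares the vertex $v$, and no strictly smaller pair was altered), applying the construction to $S'$ returns $S$, so this is an involution; it does not change the multiset of edges used, so $\lambda^{S'}=\lambda^{S}$, while the induced permutation of $S'$ differs from that of $S$ by the transposition $(i\,i')$, so $(-1)^{S'}=-(-1)^{S}$. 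Hence all terms indexed by $\mathcal I(A,B)$ cancel in pairs, and what survives is precisely $\sum_{S\in\mathcal N(A,B)}(-1)^{S}\lambda^{S}$, which is the asserted identity. The step I expect to require the most care is checking that this tail-swap genuinely is an involution --- that the ``least sharing pair'' and the ``first common vertex'' are unchanged by the swap --- which is the well-known delicate point of this argument.

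Finally I would deduce the ``in particular'' statement. If $\mathcal N(A,B)=\emptyset$ the identity exhibits $\det\big((I-\Lambda)^{-1}\big)_{A,B}$ as an empty sum, hence the identically zero polynomial. Conversely, in a DAG a nonintersecting system $S$ consists of vertex-disjoint, hence edge-disjoint, directed paths, so $\lambda^{S}$ is a squarefree monomial whose support is the edge set $E(S)$ of $S$; since $E(S)$ has in-degree and out-degree at most one at every vertex and contains no directed cycle (as $G$ is acyclic), it decomposes uniquely into a disjoint union of maximal directed paths, so $S\mapsto\lambda^{S}$ is injective on $\mathcal N(A,B)$. Thus distinct nonintersecting systems contribute distinct monomials, no further cancellation can occur, and the polynomial $\det\big((I-\Lambda)^{-1}\big)_{A,B}$ vanishes identically exactly when $\mathcal N(A,B)=\emptyset$, i.e.\ exactly when every system of $\ell$ directed paths from $A$ to $B$ contains two paths sharing a vertex.
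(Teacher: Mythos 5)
The paper itself offers no proof of this lemma --- it is quoted as the classical Gessel--Viennot--Lindstr\"om result --- so your proposal supplies a proof where the paper supplies a citation. Your overall strategy (path expansion of $(I-\Lambda)^{-1}$ via $\sum_k\Lambda^k$, Leibniz expansion over permutations, a sign-reversing weight-preserving involution on intersecting path systems, and injectivity of $S\mapsto\lambda^S$ on nonintersecting systems for the ``in particular'' clause) is the standard and correct one, and the first and last parts of your argument are fine.

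The gap is in the involution, at exactly the point you flag as delicate. Your rule --- take the lexicographically least pair $\{i,i'\}$ whose paths meet, then swap tails after the first common vertex of that pair --- is not an involution, and your parenthetical justification ``no strictly smaller pair was altered'' is false: a pair $\{j,i\}<\{i,i'\}$ involves the altered path $P_i$ and can become intersecting after the swap. Concretely, take the DAG with edges $a_1\to v$, $v\to b_1$, $a_2\to w$, $w\to b_2$, $a_3\to v$, $v\to w$, $w\to b_3$, and the system $P_1=a_1\to v\to b_1$, $P_2=a_2\to w\to b_2$, $P_3=a_3\to v\to w\to b_3$. Here $P_1\cap P_2=\emptyset$, so the least intersecting pair is $\{1,3\}$, and swapping tails at $v$ gives $P_1'=a_1\to v\to w\to b_3$ and $P_3'=a_3\to v\to b_1$; but now $P_1'$ meets $P_2$ at $w$, the least intersecting pair of the new system is $\{1,2\}$, and applying your rule to it swaps $P_1'$ with $P_2$ rather than returning to the original system, so the terms do not cancel in pairs under your map. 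The standard repair is to decouple the choice of vertex from the choice of pair: let $i$ be minimal such that $P_i$ meets \emph{some} other path, let $v$ be the first vertex of $P_i$ lying on any other path, and let $i'$ be minimal with $v\in P_{i'}$. One then checks that $i$, $v$, and $i'$ are all preserved by the tail swap (the portion of $P_i$ before $v$ touches no other path and is unchanged, and the set of paths through $v$ is unchanged), so this modified map is a genuine involution and the cancellation, and with it the lemma, goes through.
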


We are going to use this lemma by restricting the original directed acyclic graph $G$ to the induced subgraphs on the subsets $P$ and $Q$. The lemma applies to all these subgraphs because they themselves are directed acyclic graphs.

Let $A\subseteq P, B\subseteq Q$ with $|A| =|B|=\ell$.  Consider a
system $\mathcal T = \{\tau_1,\dots, \tau_{\ell}\}$ of $\ell$
$(P,Q)$-restricted treks from $A\subseteq P$ to $B\subseteq Q$,
connecting the $\ell$ distinct vertices in $A$ to the $\ell$ distinct
vertices in $B$.  Let top$(\mathcal T)$ denote the multiset
$\{\Top(\tau_1,),\ldots,\Top(\tau_\ell)\}$. Here $\Top(\tau)$ is the unique
source of the trek $\tau$, i.e., the vertex contained in both the left
side and the right side of the trek. Note that the trek system
$\mathcal T$ consists of two systems of directed paths, a path system
$S_A$ from top$(\mathcal T)$ to $A$ which only uses vertices in $P$,
and a path system $S_{B}$ from top$(\mathcal T)$ to $B$ which only
uses vertices in $Q$. We say that $\mathcal T$ has a {\em sided
  intersection} if two paths in $S_A$ share a vertex or if two paths
in $S_B$ share a vertex.

\begin{proposition}\label{prop:3.4} In a DAG consider sets of vertices 
  $A\subseteq P$ and $B\subseteq Q$ with $|A|=|B|$. Then,
$$\det(\Sigma^{(P,Q)}_{A,B}) = 0$$
if and only if every system of (simple) $(P,Q)$-restricted treks from $A$ to $B$ has a sided intersection.
\end{proposition}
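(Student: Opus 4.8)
The plan is to chain Lemma~\ref{lem:3.2} together with the Gessel--Viennot--Lindstr\"om Lemma~\ref{lem:GesselViennot}, the latter applied to the induced subgraphs $G[P]$ and $G[Q]$ (both DAGs), and then to recognize the resulting condition on path systems as a condition on restricted treks. Write $\ell := |A| = |B|$. By Lemma~\ref{lem:3.2}, $\det\Sigma^{(P,Q)}_{A,B}=0$ if and only if, for every $S\subseteq P\cap Q$ with $|S|=\ell$, at least one of $\det(((I-\Lambda)_{P,P})^{-1})_{S,A}$ or $\det(((I-\Lambda)_{Q,Q})^{-1})_{S,B}$ vanishes. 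By Lemma~\ref{lem:GesselViennot} applied inside $G[P]$ (resp.\ $G[Q]$), the former (resp.\ latter) is nonzero exactly when there is a nonintersecting system of $\ell$ directed paths from $S$ to $A$ in $G[P]$ (resp.\ from $S$ to $B$ in $G[Q]$). Combining these, $\det\Sigma^{(P,Q)}_{A,B}\neq 0$ if and only if there is a set $S\subseteq P\cap Q$ with $|S|=\ell$ carrying both a nonintersecting directed path system $S\to A$ in $G[P]$ and a nonintersecting directed path system $S\to B$ in $G[Q]$. It therefore remains to show that such an $S$ exists if and only if there is a system of simple $(P,Q)$-restricted treks from $A$ to $B$ with no sided intersection.

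For the ``if'' direction I would use the decomposition recorded just before the statement: a system $\mathcal T$ of $(P,Q)$-restricted treks from $A$ to $B$ splits into a system $S_A$ of directed paths from $\Top(\mathcal T)$ to $A$ running in $G[P]$ and a system $S_B$ of directed paths from $\Top(\mathcal T)$ to $B$ running in $G[Q]$, and having no sided intersection means precisely that $S_A$ and $S_B$ are each nonintersecting. This also forces the tops of the treks to be pairwise distinct, since the path of $S_A$ coming from a trek $\tau$ always contains $\Top(\tau)$ (with the convention that this path is the trivial path at $\Top(\tau)$ when $\tau$ has empty left side), so two treks with a common top would put two paths of $S_A$ through that vertex. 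Hence $S := \Top(\mathcal T)$ is a genuine $\ell$-element subset of $P\cap Q$ of the required kind (membership in $P\cap Q$ coming from $A\subseteq P$, $B\subseteq Q$, and $(P,Q)$-restrictedness), so Lemma~\ref{lem:GesselViennot} makes both path-system determinants nonzero and Lemma~\ref{lem:3.2} gives $\det\Sigma^{(P,Q)}_{A,B}\neq 0$.

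For the ``only if'' direction I would glue the two path systems. Given $S$, a nonintersecting system $\{\pi_s : s\in S\}$ with $\pi_s$ a directed path from $s$ to some $a(s)\in A$ in $G[P]$, and a nonintersecting system $\{\rho_s : s\in S\}$ with $\rho_s$ a directed path from $s$ to some $b(s)\in B$ in $G[Q]$, form for each $s$ the trek $\tau_s$ with top $s$, left side traversing $\pi_s$ back from $a(s)$ to $s$, and right side $\rho_s$. The $\tau_s$ connect the $\ell$ vertices of $A$ bijectively to those of $B$, are $(P,Q)$-restricted, and have no sided intersection because $\{\pi_s\}$ and $\{\rho_s\}$ are nonintersecting. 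They need not be \emph{simple}, since $\pi_s$ and $\rho_s$ may share vertices besides $s$; to repair this, let $v$ be the vertex of $\pi_s\cap\rho_s$ occurring last along $\pi_s$ and replace $\tau_s$ by the trek with top $v$ obtained from the portion of $\pi_s$ between $v$ and $a(s)$ and the portion of $\rho_s$ between $v$ and $b(s)$. By maximality of $v$ these portions meet only at $v$, so the new trek is simple; it remains $(P,Q)$-restricted because $v\in P\cap Q$; its left and right sides shrink, so the whole system still has no sided intersection; and since each new top lies on one of the pairwise vertex-disjoint paths $\pi_s$, the new tops stay distinct. This yields the desired system of simple $(P,Q)$-restricted treks without sided intersection.

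The step I expect to be the main obstacle is this ``straightening'' of the glued trek system: one must check simultaneously that truncating each $\tau_s$ at the last common vertex $v$ of $\pi_s$ and $\rho_s$ makes its two halves internally disjoint, introduces no new sided intersection between distinct treks, and does not collapse two tops into one. All three consequences flow from the single choice ``$v$ last along $\pi_s$'' together with the hypothesis that $\{\pi_s\}$ was nonintersecting to begin with, so the only real care needed is to verify them together; the remainder of the argument is a routine assembly of Lemmas~\ref{lem:3.2} and~\ref{lem:GesselViennot} through the induced subgraphs $G[P]$ and $G[Q]$.
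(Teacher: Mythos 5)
Your proof is correct and follows essentially the same route as the paper's: Lemma~\ref{lem:3.2} combined with the Gessel--Viennot--Lindstr\"om Lemma~\ref{lem:GesselViennot}, with the top set $S=\Top(\mathcal T)$ mediating between restricted trek systems without sided intersection and pairs of nonintersecting directed path systems in $G[P]$ and $G[Q]$. Your explicit ``straightening'' of each glued trek at the last common vertex of $\pi_s$ and $\rho_s$ is a welcome addition, since it actually justifies the parenthetical ``(simple)'' in the statement, which the paper's proof only asserts in its closing sentence.
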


\begin{proof} Suppose that $\det(\Sigma^{(P,Q)}_{A,B})=0$, and let
  $\mathcal T$ be a $(P,Q)$-restricted trek system from $A$ to $B$. If
  all elements of the multiset top$(\mathcal T)$ are distinct, then
  Lemma~\ref{lem:3.2} implies that either
  $\det(((I-\Lambda)_{P,P})^{-1})_{\Top(\mathcal T), A} = 0$ or
  $\det(((I-\Lambda)_{Q,Q})^{-1})_{\Top(\mathcal T), B} = 0$. If
  top$(\mathcal T)$ has repeated elements, then these determinants are
  also zero since there are repeated rows. Thus, in both cases,
  Lemma~\ref{lem:GesselViennot} implies that there is an intersection
  in the path system from top$(\mathcal T)$ to $A$ or in the path
  system from top$(\mathcal T)$ to $B$.  Hence, $\mathcal T$ has a sided intersection.

Conversely, suppose that every $(P,Q)$-restricted trek system from $A$ to $B$ has a sided intersection, and let $S\subseteq P\cap Q$. If $R = \Top(\mathcal T)$ for some $(P,Q)$-restricted trek system $\mathcal T$ from $A$ to $B$, then either the path system from $\Top(\mathcal T)$ to $A$ or the path system from $\Top(\mathcal T)$ to $B$ has an intersection. If $R$ is not the set of top elements for some $(P,Q)$-restricted trek system $\mathcal T$ from $A$ to $B$, then there is no $P$-restricted path system connecting $R$ to $A$ or there is no $Q$-restricted path system from $R$ to $B$. In both cases, Lemma~\ref{lem:GesselViennot} implies that either $\det(((I-\Lambda)_{P,P})^{-1})_{R,A} = 0$ or $\det(((I-\Lambda)_{Q,Q})^{-1})_{R,B} = 0$. Then, Lemma~\ref{lem:3.2} implies that $\det(\Sigma^{(P,Q)}_{A,B}) = 0$.

Note that it is sufficient to check the systems of simple treks only.
Here, simple indicates that a trek has no repeated vertices.
\end{proof}

We now define a new DAG associated to $G$, denoted $\tilde G_{P,Q}$ in
order to be able to invoke the Max-Flow-Min-Cut Theorem (see
Theorem~\ref{thm:Menger}).  Let $P'=\{i':i\in P\}$ be a set of new
vertices, each being the copy of a corresponding vertex in $P$.  The
vertex set of graph $\tilde G_{P,Q}$ is $P'\cup Q$.  The edge set of
$\tilde G_{P,Q}$ includes the edge $i\to j$ for all $i,j\in Q$ such
that $i\to j$ is an edge in $G$.  Moreover, it includes the edge
$j'\to i'$ for all $i,j\in P$ such that $i\to j$ is an edge in $G$,
and the edge $i'\to i$ for all $i\in P\cap Q$.

\begin{proposition}\label{prop:3.5} The $(P,Q)$-restricted treks in $G$ from $i\in P$ to $j\in Q$ are in bijective correspondence with directed paths from $i'$ to $j$ in $\tilde G_{P,Q}$. Simple $(P,Q)$-restricted treks in $G$ from $i$ to $j$ are in bijective correspondence with directed paths from $i'$ to $j$ in $\tilde G_{P,Q}$ that use at most one edge from any pair $a\to b$ and $b'\to c'$ where $a,b\in Q$, $b,c\in P$.
\end{proposition}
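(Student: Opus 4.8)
The plan is to exhibit the bijection explicitly and then to read off the refinement about simple treks from how this bijection interacts with the edges of $\tilde G_{P,Q}$; this runs parallel to the unrestricted construction in~\cite{STD}. The starting point is the shape of a directed path in $\tilde G_{P,Q}$ from a vertex $i'\in P'$ to a vertex $j\in Q$. By construction, the only edges leaving a vertex $b'\in P'$ go either to another vertex $c'\in P'$ (the image of an edge $c\to b$ of $G$ with $c,b\in P$) or, when $b\in P\cap Q$, to the copy $b\in Q$ (a crossover edge); and the only edges leaving a vertex of $Q$ go again to $Q$ (images of edges $a\to b$ of $G$ with $a,b\in Q$). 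Hence any directed path from $i'$ to $j$ consists of a (possibly empty) initial run through $P'$, then a single crossover edge $t'\to t$ with $t\in P\cap Q$, then a (possibly empty) run through $Q$ ending at $j$; this decomposition is forced and unique, since once the path enters $Q$ it cannot return to $P'$.

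I would then spell out the correspondence both ways. To a $(P,Q)$-restricted trek
\[
  \tau:\quad i=i_\ell\leftarrow\cdots\leftarrow i_1=j_1\to\cdots\to j_r=j
\]
(so $\{i_1,\dots,i_\ell\}\subseteq P$, $\{j_1,\dots,j_r\}\subseteq Q$, and the source $t=i_1=j_1$ lies in $P\cap Q$; the cases $\ell=1$ or $r=1$, where $i$ or $j$ is the source, are allowed) I associate the directed path $i_\ell'\to\cdots\to i_1'\to j_1\to\cdots\to j_r$ in $\tilde G_{P,Q}$. This is a legitimate path: each step $i_{k+1}'\to i_k'$ is the image of the edge $i_k\to i_{k+1}$ of $G$, the step $i_1'\to i_1$ is the crossover edge available because $t\in P\cap Q$, and each step $j_k\to j_{k+1}$ is the image of the edge $j_k\to j_{k+1}$ of $G$. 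Conversely, given a directed path from $i'$ to $j$, I apply the decomposition above: reading its $P'$-run backwards and deleting primes yields the left side of a trek, the crossover vertex is the source, and the $Q$-run is the right side, producing a $(P,Q)$-restricted trek from $i$ to $j$. These two assignments are visibly inverse to one another, which proves the first assertion.

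For the statement about simple treks I would use that, since $G$ is a DAG, every directed walk in $G$ is automatically a simple path; hence the left side and the right side of $\tau$ are each simple, and $\tau$ fails to be simple precisely when some vertex $b\neq t$ lies on both its left side and its right side. In that situation $b\in P\cap Q$: the left side of $\tau$ enters $b$ along some edge $c\to b$ of $G$ with $c\in P$, corresponding to the edge $b'\to c'$ of $\tilde G_{P,Q}$, and the right side enters $b$ along some edge $a\to b$ of $G$ with $a\in Q$, corresponding to the edge $a\to b$ of $\tilde G_{P,Q}$; hence the associated path uses both $a\to b$ and $b'\to c'$. Conversely, if the associated path uses both edges of such a pair, then on the one hand $b$ appears on the left side of $\tau$ (via $b'\to c'$) and on the right side of $\tau$ (via $a\to b$), and on the other hand $b\neq t$, because $b$ is the head of an edge of the $Q$-run $j_1\to\cdots\to j_r$ of the path, which starts at $t=j_1$; so $\tau$ is not simple. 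Therefore $\tau$ is simple if and only if its associated path uses at most one edge from each pair $\{a\to b,\ b'\to c'\}$ with $a,b\in Q$ and $b,c\in P$, and the bijection of the first part restricts to the asserted bijection. I expect this last equivalence to be the only delicate point — matching a coincidence between a left-side and a right-side vertex of $\tau$ with the forbidden edge configuration in $\tilde G_{P,Q}$, using crucially that a walk in a DAG cannot repeat a vertex so that each leg of $\tau$ is automatically simple and the source cannot recur; the remainder is bookkeeping about how priming reverses edge orientations.
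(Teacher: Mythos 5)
Your argument is correct and follows essentially the same route as the paper's (much terser) proof: decompose a path in $\tilde G_{P,Q}$ into a $P'$-run, the unique crossover edge $t'\to t$ marking the top, and a $Q$-run, and observe that a repeated vertex $b\neq t$ in a trek is exactly the simultaneous use of some pair $a\to b$, $b'\to c'$. Your version just makes explicit the details (uniqueness of the decomposition, why each leg is automatically simple in a DAG, why $b\neq t$) that the paper leaves implicit.
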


\begin{proof} Every trek from $i$ to $j$ is the union of two paths with a common top, the left path in $P$, the right path in $Q$. The part of the trek from the top to $i$ corresponds to the subpath with only vertices in $P'$, and the part of the trek from the top to $j$ corresponds to the subpath with only vertices in $Q$. The unique edge of the form $k'\to k$ corresponds to the top of the trek. Excluding $a\to b$ and $b'\to c'$ implies that a trek never visits the same vertex $b$ twice.
\end{proof}

Menger's theorem, also known as the Max-Flow-Min-Cut theorem, now allows us to turn the sided crossing result on $G$ into a blocking characterization on $\tilde G_{P,Q}$.

\begin{theorem}[Vertex version of Menger's theorem]\label{thm:Menger}
  The cardinality of the largest set of vertex disjoint directed paths
  between two nonadjacent vertices $u$ and $v$ in a DAG is equal to
  the cardinality of the smallest blocking set, where a blocking set is
  a set of vertices whose removal from the graph ensures there is no
  directed path from $u$ to $v$.
\end{theorem}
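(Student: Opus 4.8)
The plan is to reduce to the edge version of the Max-Flow-Min-Cut theorem via the standard vertex-splitting construction. First I would build an auxiliary network $N$ from the DAG $G$: retain $u$ and $v$, and replace every other vertex $w$ by a pair $w^{\mathrm{in}},w^{\mathrm{out}}$ joined by an arc $w^{\mathrm{in}}\to w^{\mathrm{out}}$ of capacity $1$; reroute each arc $a\to w$ of $G$ to $a^{\mathrm{out}}\to w^{\mathrm{in}}$ and each arc $w\to b$ to $w^{\mathrm{out}}\to b^{\mathrm{in}}$, with the conventions $u^{\mathrm{out}}=u$ and $v^{\mathrm{in}}=v$; and assign capacity $+\infty$ to every arc other than the internal ones. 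Then $N$ is again acyclic, and directed $u$-$v$ paths in $N$ correspond bijectively to directed $u$-$v$ paths in $G$, with an internal arc $w^{\mathrm{in}}\to w^{\mathrm{out}}$ being traversed exactly when the corresponding path in $G$ passes through $w$.

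Next I would identify the two quantities in the statement with flow quantities in $N$. A family of $k$ pairwise internally vertex-disjoint $u$-$v$ paths in $G$ (sharing only the endpoints) corresponds to $k$ arc-disjoint $u$-$v$ paths in $N$ that pass through $k$ distinct internal arcs, so pushing one unit along each gives an integral $u$-$v$ flow of value $k$; conversely, by the integrality of a maximum flow (Ford--Fulkerson), a maximum flow decomposes into unit path-flows which, since every internal arc has capacity $1$, translate back into internally vertex-disjoint $u$-$v$ paths in $G$. Hence the maximum number of such paths equals the maximum flow value in $N$. For the other side, the set of all internal arcs is a cut of $N$ of finite capacity --- and this is precisely where the hypothesis that $u$ and $v$ are nonadjacent enters, since it guarantees that every $u$-$v$ path in $G$ uses at least one intermediate vertex. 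Consequently a minimum cut of $N$ also has finite capacity, hence uses only capacity-$1$ internal arcs, and such a cut is exactly the image of a set $S\subseteq V\setminus\{u,v\}$ whose deletion destroys all $u$-$v$ paths in $G$; conversely every blocking set arises this way. So the minimum cut capacity of $N$ equals the minimum size of a blocking set in $G$.

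Finally I would apply Max-Flow-Min-Cut to $N$: the maximum flow value equals the minimum cut capacity, and by the two correspondences above this is precisely the asserted equality. The only steps that need care are the finite-capacity argument for the minimum cut --- that no $\infty$-weight arc is ever cut, and that the nonadjacency of $u$ and $v$ rules out a direct $u\to v$ arc --- together with checking that decomposing an integral flow into paths respects vertex-disjointness; both are routine once the splitting construction is in place. Alternatively one could give a self-contained induction on the number of arcs of $G$, deleting or contracting a carefully chosen arc and case-splitting on whether it belongs to every minimum blocking set, but the reduction to Max-Flow-Min-Cut is shorter and is the version most naturally aligned with the way the result is used in the sequel.
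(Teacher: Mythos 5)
Your proof is correct, but note that the paper itself does not prove this statement at all: Theorem \ref{thm:Menger} is quoted as a classical fact (the vertex form of Menger's theorem, equivalently Max-Flow-Min-Cut) and is used as a black box in the proof of Theorem \ref{thm:main} for DAGs. So there is no in-paper argument to compare against. Your vertex-splitting reduction is the standard textbook derivation and all the delicate points are handled: the internal arcs of capacity $1$ force each split vertex to carry at most one unit of flow, so an integral flow decomposition (which produces only paths, since the auxiliary network is acyclic) yields internally vertex-disjoint paths in $G$; and the nonadjacency of $u$ and $v$ is used exactly where it must be, namely to ensure the set of all internal arcs is a finite-capacity cut, so that a minimum cut consists solely of internal arcs and hence corresponds to a blocking set of intermediate vertices. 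One cosmetic remark: to invoke integrality cleanly you may wish to replace the $+\infty$ capacities by the finite integer $|V|$, which changes neither the maximum flow nor the identification of minimum cuts with blocking sets. The statement's phrase ``vertex disjoint'' must of course be read as internally vertex disjoint, which is how you (correctly) treat it.
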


\begin{proof}[Proof of Theorem~\ref{thm:main} for DAGs] We first focus
  on the case where $\det \Sigma^{(P,Q)}_{A,B} = 0$ so that the rank
  is at most $k-1$, where $k=|A|=|B|$. According to
  Proposition~\ref{prop:3.4}, every system of $k$ $(P,Q)$-restricted
  treks from $A$ to $B$ must have a sided intersection. That is, the
  number of vertex disjoint paths from $A'$ to $B$ is at most $k-1$ in
  the graph $\tilde G_{P,Q}$. We add two new vertices to $\tilde
  G_{P,Q}$, one vertex $u$ that points to each vertex in $A'$ and one
  vertex $v$ that each vertex in $B$ points to $v$. Thus, there are at
  most $k-1$ vertex disjoint paths from $u$ to $v$. Applying Menger's
  theorem, there is a blocking set $W$ in $\tilde G_{P,Q}$ of
  cardinality $|W|\le k-1$. Set $J_A = \{i\in P : i'\in W\}$ and $J_B = \{i\in Q: i\in W\}$. Then, we have that $|J_A| + |J_B|\leq k-1$, and these two sets $(P,Q)$-restricted trek-separate $A$ from $B$.

Conversely, suppose there exist sets $J_A\subseteq P$ and
$J_B\subseteq Q$ with $|J_A| + |J_B|\leq k-1$ which $(P,Q)$-restricted
trek-separate $A$ from $B$. Then $W = \{i: i\in J_B\}\cup\{i':i\in
J_A\}$ is a blocking set between $u$ and $v$ as above.  By Menger's theorem, since $|W|\leq k-1$, there is no vertex disjoint system of $k$ paths from $A'$ to $B$ in $\tilde G_{P,Q}$. Thus, every $(P,Q)$-restricted trek system from $A$ to $B$ has a sided intersection so that $\det \Sigma^{(P,Q)}_{A,B} = 0$ by Proposition~\ref{prop:3.4}.

From the special case of determinants, we deduce the general result,
because if the smallest blocking set has size $r$, there exists a
collection of $r$ disjoint paths between any subset of $A'$ and any
subset of $B$, and this is the largest possible number of paths in
such a collection.  This means that all $(r+1)\times (r+1)$ minors of
$\Sigma^{(P,Q)}_{A,B}$ are zero, but at least one $r\times r$ minor is
not zero.  Hence, $\Sigma^{(P,Q)}_{A,B}$ has rank $r$ for generic
choices of the parameters.
\end{proof}

\subsection{Proof of Theorem~\ref{thm:main}({\em i}) for mixed graphs}

A standard argument allows us to reduce to the case where there are no
bidirected edges in the graph. This can be achieved by subdividing the
bidirected edges; that is, for each bidirected edge
$i\leftrightarrow j$ in the graph, where $i\leq j$, we replace
$i\leftrightarrow j$ with a vertex $v_{i,j}$, directed edges
$v_{i,j}\to i$ and $v_{i,j}\to j$. If $i$ or $j$ lie in $P$ or $Q$,
then we add $v_{i,j}$ to $P$ or $Q$ respectively. Call the enhanced
sets $\overline P$ and $\overline Q$. The graph $\overline G$ obtained
from $G$ by subdividing all of its bidirected edges is called the {\em
  bidirected subdivision}, or {\em canonical DAG} associated to $G$.

\begin{proposition}\label{prop:3.9} Let $A\subseteq P$, $B\subseteq Q$
  be sets of vertices of a mixed graph with $|A| = |B|$.
\begin{enumerate}
\item[(i)] The matrix $\Sigma^{(P,Q)}_{A,B}$ associated to $G$ has the
  same generic rank as the matrix $\Sigma^{(\overline P,\overline Q)}_{A,B}$
  associated to $\overline G$.
\item[(ii)] There exist $J_L\subseteq P, J_R\subseteq Q$ with $|J_L| + |J_R| = r$ that $(J_L, J_R)$ $(P,Q)$-restricted trek-separates $A$ from $B$ in $G$ if and only if there exist $\overline J_L\subseteq \overline P, \overline J_R\subseteq \overline Q$ with $|\overline J_L|+|\overline J_R| = r$ that $(\overline J_L, \overline J_R)$ $(\overline P, \overline Q)$-restricted trek-separates $A$ from $B$ in $\overline G$.
\end{enumerate}
\end{proposition}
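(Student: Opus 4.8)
The plan is to reduce to the directed acyclic case via the canonical DAG $\overline G$, exactly as in~\cite{STD}. Once parts (i) and (ii) of the proposition are in hand, Theorem~\ref{thm:main}(i) for an arbitrary acyclic mixed graph $G$ follows by applying the already-established DAG version to $\overline G$ — which gives that $\mathrm{rank}\,\Sigma^{(\overline P,\overline Q)}_{A,B}$ generically equals the minimal $(\overline P,\overline Q)$-restricted-separator size in $\overline G$ — and then transporting this across (i) (the two generic ranks agree) and (ii) (the minimal separator sizes in $G$ and $\overline G$ agree). So the task is to prove the two compatibility statements.

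For part (i), I would realize the covariance model of $G$ inside that of $\overline G$ through the standard substitution $\overline\lambda_{ab}=\lambda_{ab}$ for $a\to b\in\mathcal D$, $\;\omega_{ij}=\overline\omega_{v_{ij}}\overline\lambda_{v_{ij},i}\overline\lambda_{v_{ij},j}$ for $i\leftrightarrow j\in\mathcal B$, and $\;\omega_{ii}=\overline\omega_{ii}+\sum_{j:\,i\leftrightarrow j}\overline\lambda_{v_{ij},i}^{\,2}\,\overline\omega_{v_{ij}}$. By the path expansion of the entries of $(I-\Lambda_{C,C})^{-1}$ established above, each entry of $\Sigma^{(P,Q)}_{A,B}$ (for $G$) and of $\Sigma^{(\overline P,\overline Q)}_{A,B}$ (for $\overline G$) is a sum of trek monomials over the relevant restricted treks, and I would check that the substitution carries the first polynomial matrix onto the second. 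The bookkeeping uses the contraction ``delete the unique source $v_{ij}$'' of a trek of $\overline G$: this produces a trek of $G$ through the bidirected edge $i\leftrightarrow j$ when the two directed legs of the trek leave $v_{ij}$ through distinct children, and a type-(b) trek of $G$ otherwise; the restriction conditions correspond because $v_{ij}$ lies in $\overline P$ (resp.\ $\overline Q$) precisely when $i$ or $j$ lies in $P$ (resp.\ $Q$). Since this substitution is a dominant morphism of parameter spaces — each coordinate $\omega_{ij}$ and $\omega_{ii}$ is hit on a Zariski-dense set — the generic ranks of $\Sigma^{(P,Q)}_{A,B}$ and $\Sigma^{(\overline P,\overline Q)}_{A,B}$ coincide. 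This is routine and essentially contained in~\cite{STD}, so I would keep it terse.

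For part (ii), the forward implication is immediate: a pair $(J_L,J_R)$ with $J_L\subseteq P$, $J_R\subseteq Q$ that $(P,Q)$-restricted-trek-separates $A$ from $B$ in $G$ does the same, viewed inside $\overline P$ and $\overline Q$, in $\overline G$, since every $(\overline P,\overline Q)$-restricted trek of $\overline G$ contracts to a $(P,Q)$-restricted trek of $G$, which is blocked, and the blocking (original) vertex lies on the corresponding side of the lift; cardinalities are unchanged. The reverse implication is the delicate part, and it suffices to turn a \emph{minimum} $(\overline P,\overline Q)$-restricted separator $(\overline J_L,\overline J_R)$ of $\overline G$ into a $(P,Q)$-restricted separator of $G$ of no larger total size, by keeping the original vertices of $\overline J_L,\overline J_R$ and replacing each subdivision vertex $v_{ij}$ appearing in $\overline J_L$ (resp.\ $\overline J_R$) by one of its two endpoints. \textbf{The main obstacle} is to show the endpoint can be chosen consistently: a subdivision vertex in a minimal separator cannot be essential for $(P,Q)$-restricted treks that pass through the bidirected edge in \emph{both} orientations. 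I would prove this by a trek-crossing argument. If $\overline\tau$ and $\overline\tau'$ are $(\overline P,\overline Q)$-restricted treks, both with source $v_{ij}$, the left leg of $\overline\tau$ leaving toward $i$ and the left leg of $\overline\tau'$ leaving toward $j$, with neither trek meeting a separator vertex other than $v_{ij}$, then splicing the directed path that the $i$-leg of $\overline\tau$ traces into $A$ with the directed path that the $i$-leg of $\overline\tau'$ traces into $B$ produces a type-(b) $(P,Q)$-restricted trek of $G$ with source $i$ that meets no vertex of the separator, contradicting that $(\overline J_L,\overline J_R)$ separates in $\overline G$. (Note too that a subdivision vertex in a minimum separator lies in at most one of $\overline J_L$, $\overline J_R$, since as a trek source it blocks exactly the same treks on the left as on the right.) Consequently, for each essential $v_{ij}$ one orientation is distinguished; its relevant endpoint — which then automatically lies in $P$, resp.\ $Q$ — blocks all the ``only-by-$v_{ij}$'' treks of that orientation, while the treks of the other orientation are already blocked by original separator vertices that carry over. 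This yields a $(P,Q)$-restricted separator in $G$ of size at most that of $(\overline J_L,\overline J_R)$; together with the forward implication it gives equality of the minimal restricted-separator sizes in $G$ and $\overline G$, from which the stated ``$=r$'' form follows by padding. This completes the reduction and, via the DAG case applied to $\overline G$, the proof of Theorem~\ref{thm:main}(i) for mixed graphs.
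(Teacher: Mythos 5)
Your proposal is correct and follows essentially the same route as the paper: for (i) the same parameter substitution realizing $\overline{G}$'s covariances inside $G$'s together with a density/dominance argument (the paper makes this explicit via a local section near $(0,I)$), and for (ii) the same reduction to a minimal separator in $\overline{G}$, replacement of each subdivision vertex by a suitably oriented endpoint, and the same trek-patching argument to show the orientation is well defined. No gaps.
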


\begin{proof}
  (i)   Let $\bar\Lambda=(\bar\lambda_{k,l})$ and
  $\bar\Omega=(\bar\omega_{k,l})$ be parameters for $\bar G$.  Define
  parameters for $G=(V,\mathcal{D},\mathcal{B})$ as follows.  For any
  directed edge $i\to j$ in $G$, set
  $\lambda_{i,j}=\bar\lambda_{i,j}$.  For any bidirected edge $i\bi j$
  in $G$, set
  \begin{equation}
    \omega_{i,j} = \overline
    \omega_{v_{i,j},v_{i,j}}\overline\lambda_{v_{i,j},i}
    \overline\lambda_{v_{i,j},j}.\label{eq:subdivide-ij} 
  \end{equation}
  Finally, for each vertex $i$ in $G$, set
  \begin{equation}
    \omega_{i,i} = \overline \omega_{i,i} +
    \sum_{j\leftrightarrow
      i\in G}\overline\omega_{v_{i,j},v_{i,j}}
    \overline\lambda_{v_{i,j}i}^2.\label{eq:subdivide-ii}
  \end{equation}
  Clearly, $\Lambda=(\lambda_{i,j})\in\mathbb{R}^{\mathcal{D}}$.
  Since all $\overline\omega_{i,i}>0$, the matrix
  $\Omega=(\omega_{i,j})$ is positive definite and, thus, in
  $\mathit{PD}(\mathcal{B})$.  Let $\Sigma^{(P,Q)}_{A,B}$ be the
  matrix defined by $(\Lambda,\Omega)$, and let
  $\Sigma^{(\overline P,\overline Q)}_{A,B}$ be the matrix defined by
  $(\overline\Lambda,\overline\Omega)$.  Applying the
  $(P,Q)$-restricted trek rule to $G$ and $\bar G$, respectively, we
  see that
  $\Sigma^{(P,Q)}_{A,B}=\Sigma^{(\overline P,\overline Q)}_{A,B}$.  We
  conclude that the set of matrices
  $\Sigma^{(\overline P,\overline Q)}_{A,B}$ associated to $\bar G$ is
  contained in the set of matrices $\Sigma^{(P,Q)}_{A,B}$ associated
  to $G$.

  In general the reverse inclusion does not hold \cite{MR2740626}.
  Nevertheless, the set of matrices $\Sigma^{(P,Q)}_{A,B}$ for $G$ has
  the same Zariski closure as the set of
  $\Sigma^{(\overline P,\overline Q)}_{A,B}$ for $\overline G$.  Let
  $\mathcal{U}\subset\mathbb{R}^{\mathcal{D}}\times
  \mathit{PD}(\mathcal{B})$ be a neighborhood of $(0,I)$, i.e., we
  consider matrices $\Lambda$ with entries of small magnitude and
  $\Omega$ near the identity matrix.  To prove equality of the Zariski
  closures, it suffices to show that every matrix
  $\Sigma^{(P,Q)}_{A,B}$ given by a choice of
  $(\Lambda,\Omega)\in\mathcal{U}$ is equal to a matrix
  $\Sigma^{(\overline P,\overline Q)}_{A,B}$ associated to a choice of
  $\overline\Lambda$ and $\overline\Omega$ for $\overline G$.  This in
  turn will follow from the trek rule if we can find
  $(\overline\Lambda,\overline\Omega)$ such
  that~\eqref{eq:subdivide-ij} and~\eqref{eq:subdivide-ii} hold.
  However, this is possible because near the identity matrix, each
  off-diagonal entry $\omega_{i,j}$ is small.  Specifically, we choose
  $\overline \omega_{v_{i,j},v_{i,j}} = 1$, and set
  $\overline \lambda_{v_{i,j},i} = \sqrt{|\omega_{i,j}|}$ and
  $\overline\lambda_{v_{i,j},j} =
  \text{sign}(\omega_{i,j})\sqrt{|\omega_{i,j}|}$.  When the
  $\omega_{i,j}$ are small enough, the sum on the right-hand side
  of~\eqref{eq:subdivide-ii} is smaller than $\omega_{i,i}$, which is
  near one.  Hence, we can find a positive $\overline\omega_{i,i}$
  satisfying~\eqref{eq:subdivide-ii}, which ensures that
  $\overline\Omega$ is a diagonal matrix with positive diagonal
  entries as required.

  (ii) Any pair of sets $S_L$ and $S_R$ that are $(P,Q)$-restricted
  trek-separating in $G$ are also clearly
  $(\overline P,\overline Q)$-restricted trek-separating in
  $\overline G$.  Conversely, suppose that
  $(\overline J_L, \overline J_R)$ is a minimal
  $(\overline P,\overline Q)$-restricted trek-separating set in
  $\overline G$; that is, if any vertex is deleted from
  $(\overline J_L, \overline J_R)$, we no longer have a
  $(\overline P,\overline Q)$-restricted trek-separating set. We show
  that such a minimal $(\overline P,\overline Q)$-restricted
  trek-separating set in $\overline G$ corresponds to a
  $(P,Q)$-restricted trek-separating set in $G$. Define
$$J_L = (\overline J_L\cap P)\cup \{i\in P: v_{i,j}\in\overline J_L\},$$
$$J_R = (\overline J_R\cap Q)\cup\{j\in Q: v_{i,j}\in\overline J_R\}.$$
If $\overline J_L$ and $\overline J_R$ contain none of the vertices
$v_{i,j}$, then $J_L$ and $J_R$ clearly $(P,Q)$-restricted
trek-separate $A$ and $B$ in $G$. Otherwise, the way that
$\{i\in P:v_{i,j}\in\overline J_L\}$ and
$\{j\in Q: v_{i,j}\in \overline J_R\}$ are chosen is important. Given
a vertex $v_{i,j}\in\overline J_L\cup\overline J_R$, let
$\mathcal T(v_{i,j})$ denote the set of $(P,Q)$-restricted treks
$\tau = (\tau_L, \tau_R)$ from $A$ to $B$ such that
$\tau_L\cap\overline J_L = \{v_{i,j}\}$ or
$\tau_R\cap\overline J_R = \{v_{i,j}\}$. Since
$(\overline J_L, \overline J_R)$ is minimal, then
$\mathcal T(v_{i,j})$ must be nonempty. This implies that in every
$(P,Q)$-restricted trek $\tau = (\tau_L, \tau_R)\in\mathcal T(v_{i,j})$, up to
relabeling, $i$ occurs in $\tau_L$ (whose sink lies in $A$) and $j$
occurs in $\tau_R$ (whose sink lies in $B$). For if there were also a
trek $\tau=(\tau_L, \tau_R)$ in $\mathcal T(v_{i,j})$ which has $j$ in $\tau_L$ or
$i$ in $\tau_R$, we could patch two halves of these treks together to
find a $(P,Q)$-restricted trek from $A$ to $B$ that does not have a
sided intersection with $(\overline J_L, \overline J_R)$. So, assume
$i$ lies in $\tau_L$, and $j$ lies in $\tau_R$ for all
$(P,Q)$-restricted treks in $\mathcal T(v_{i,j})$. In this case, add
$i$ to $J_L$ whenever $v_{i,j}\in\overline J_L$, and add $j$ to $J_R$
whenever $v_{i,j}\in \overline J_R$. Then,
$|J_L| + |J_R|\leq |\overline J_L| + |\overline J_R|$, and
$(J_L, J_R)$ $(P,Q)$-restricted trek-separates $A$ from $B$ in $G$.
\end{proof}

%We may now proceed to the proof of the main theorem in this section.
To finish the proof of Theorem~\ref{thm:main}(i), note that Proposition~\ref{prop:3.9} immediately reduces the statement to the case of directed acyclic graphs, which was given in the previous subsection.

\subsection{Proof of Theorem~\ref{thm:main}({\em ii})} Using first the
Cauchy-Binet Theorem and then the Gessel-Viennot-Lindstr\"om
Lemma~\ref{lem:GesselViennot}, we have that
\begin{align*}
\det(\Sigma^{(P, Q)}_{A, B}) &=\det \left(((I - \Lambda)_{P, P})^{-T}\right)_{A, P}\Omega_{P, Q}\left(((I-\Lambda)_{Q, Q})^{-1}\right)_{Q, B}\\
&= \sum_{S\subseteq P, R\subseteq Q,\atop |S| = |R| = |A|} \det(((I-\Lambda)_{P, P})^{-T})_{A, S}\det(\Omega_{S, R})\det(((I-\Lambda)_{Q, Q})^{-1})_{R, B}\\
&=  \sum_{S\subseteq P, R\subseteq Q, \atop |S| = |R| = |A|}\,\,\sum_{\tau_1\in\mathcal N(S, A),\atop \tau_2\in\mathcal N(R, B)} (-1)^{\tau_1+\tau_2}\lambda^{\tau_1 + \tau_2}\det(\Omega_{S, R})\\
& = \sum_{S\subseteq P, R\subseteq Q, \atop |S| = |R| = |A|}\,\,\sum_{\tau_1\in\mathcal N(S, A),\atop \tau_2\in\mathcal N(R, B)}\sum_{\sigma\in\Sigma_{|S|}} (-1)^{\tau_1+\tau_2+\text{sign}(\sigma)}\lambda^{\tau_1 + \tau_2}\prod_i \omega_{s_i, r_{\sigma(i)}}.
\end{align*}
The latter sum goes over all trek systems between $A$ and $B$ whose
left directed parts have no sided intersection and only use vertices
from $P$, whose right directed parts have no sided intersection and
only use vertices from $Q$, and use left and right sides are joined
via ``middle vertices'' in $S$ and $R$.  Each summand is the product
of the trek monomials of the treks in each such system times the sign
of the permutation induced by each such trek system. Moreover, note
that each trek system with no sided intersection between $A$ and $B$
appears in this sum.

%%% Local Variables:
%%% TeX-master: "nested_dets"
%%% End:

\section{Proofs for nested determinants}

%\subsection{Proof of Lemma~\ref{RestrictedTrekSystems}}\label{app:pf_lem5.2}
%\begin{proof} Using the Cauchy-Binet Theorem, we have that
%$$\det(\Sigma^{(P, Q)}_{A, B}) =\det \left(((I - \Lambda)_{P, P})^{-T}\right)_{A, P}\Omega_{P, Q}\left(((I-\Lambda)_{Q, Q})^{-1}\right)_{Q, B}$$
%$$= \sum_{S\subseteq P, R\subseteq Q, |S| = |R| = |A|} \det(((I-\Lambda)_{P, P})^{-T})_{A, S}\det(\Omega_{S, R})\det(((I-\Lambda)_{Q, Q})^{-1})_{R, B}$$
%$$ =  \sum_{S\subseteq P, R\subseteq Q, |S| = |R| = |A|}\,\,\sum_{T_1\in\mathcal N(S, A), T_2\in\mathcal N(R, B)} (-1)^{T_1+T_2}\lambda^{T_1 + T_2}\det(\Omega_(S, R))$$
%$$ = \sum_{S\subseteq P, R\subseteq Q, |S| = |R| = |A|}\,\,\sum_{T_1\in\mathcal N(S, A), T_2\in\mathcal N(R, B)}\sum_{\sigma\in\Sigma_{|S|}} (-1)^{T_1+T_2+\text{sign}(\sigma)}\lambda^{T_1 + T_2}\prod_i \omega_{s_i, r_{\sigma(i)}}.$$
%The latter sum is precisely the sum over all trek systems between $A$ and $B$ that use middle vertices $S$ and $R$ and whose left directed parts have no sided intersection and only use vertices from $P$ and whose right directed parts have no sided intresection and only use vertices from $Q$ (by Gessel-Viennot-Lindstr\"om Lemma~\ref{lem:GesselViennot}) of the product of the trek monomials of the treks in each such system times the sign of the permutation induced by each such trek system. Moreover, note that each trek system with no sided intersection between $A$ and $B$ appears in this sum.
%\end{proof}

\subsection{Proof of Lemma~\ref{lem:swapping}}\label{app:pf_lem_swapping}

\begin{proof} Suppose first that there exist $A_i$ and $A_j$ such that
  $A_i\cap A_j \neq 0$ for $i\neq j$. The case where two of the
  $B_i$'s intersect is analogous. Then,
  $|\Sigma_{A_1\uplus\cdots\uplus A_k, B_1\uplus\cdots\uplus B_k}| =
  0$ since this matrix has a repeated row. On the other hand, we can
  select $C_i = \emptyset$, which makes $\mathcal P_{A_i, B_i, (C_i,
    D_i)} = 0$, so that for any choice of the rest of the $C_j$ and
  $D_j$, we have that $\prod_{j=1}^k\mathcal P_{A_j, B_j, (C_j,
    D_j)}=0$. Thus, both sides are equal to 0, which establishes the
  statement. 

  Now, assume that $A_i\cap A_j = B_i\cap B_j = \emptyset$ for all
  $i\neq j$. We know by Theorem~\ref{TrekSystems} that
  \[
  |\Sigma_{A_1\cup\cdots\cup A_k, B_1\cup\cdots\cup B_k}| = \mathcal
  P_{A_1\cup\cdots\cup A_k, B_1\cup\cdots\cup B_k}.
  \]
For every $i=1,\ldots,k$ let $C_i^c$, the complement of $C_i$, be the union over all treks in trek systems with
no sided intersection between $A_1\cup\cdots\cup A_k$ and
$B_1\cup\cdots\cup B_k$ of the vertices that take part in the left
side of the treks that start at
$A_1\cup\cdots\cup A_{i-1}\cup A_{i+1}\cup\cdots\cup A_k$. Let $D_i^c$
be the union over all treks in trek systems with no sided intersection between
$A_1\cup\cdots\cup A_k$ and $B_1\cup\cdots\cup B_k$ of the vertices
that take part in the right side of the treks that start at
$A_1\cup\cdots\cup A_{i-1}\cup A_{i+1}\cup\cdots\cup A_k$ (and end at
$B_1\cup\cdots\cup B_{i-1}\cup B_{i+1}\cup\cdots\cup B_k$).

By assumption, if we are given two trek systems $\mathcal T_1$ and
$\mathcal T_2$ with
no sided intersection between $A_1\cup\cdots\cup A_k$ and
$B_1\cup\cdots\cup B_k$, then we can swap the treks from $A_i$ to
$B_i$ from the first system $\mathcal T_1$ with those from the second system
$\mathcal T_2$ and obtain two other trek systems between
$A_1\cup\cdots\cup A_k$ and $B_1\cup\cdots\cup B_k$ with no sided
intersection.  Hence, each summand in
$\mathcal P_{A_1\cup\cdots\cup A_k,B_1\cup\cdots\cup B_k}$ can be
factored uniquely as a product of one element from each of
$\mathcal P_{A_i, B_i, (C_i, D_i)}$. Conversely, the product of one
element from each of $\mathcal P_{A_i, B_i, (C_i, D_i)}$ gives an
element from
$\mathcal P_{A_1\cup\cdots\cup A_k,B_1\cup\cdots\cup B_k}$. Thus,
$$|\Sigma_{A_1\cup\cdots\cup A_k,B_1\cup\cdots\cup B_k}| = \prod_{i=1}^k\mathcal P_{A_i, B_i, (C_i, D_i)},$$
as required.
\end{proof}

\subsection{Proof of Lemma~\ref{lem:E_ijF_ij}}\label{app:pf_lem_E_ijF_ij}

\begin{proof} Recall that $\mathcal P_{\{a_1,\ldots, a_n\},
    \{b_1,\dots, b_n\}, (E, F)}$ is the sum of the trek monomials of
  all trek systems with no sided intersection between $\{a_1,\ldots,
  a_n\}$ and $\{b_1,\ldots, b_n\}$ that only use $E$ on the left and
  $F$ on the right.  For each such trek system, the trek starting at
  $a_i$ only uses $E_{ij}$ on the left, and the trek ending at $b_j$
  only uses $F_{ij}$ on the right. On the other hand, the determinant
  of $(\mathcal P_{a_i, b_j, (E_{ij}, F_{ij})})_{i,j}$ is the sum of the trek monomials of all trek systems with no sided intersection between $\{a_1,\ldots, a_n\}$ and $\{b_1,\ldots, b_n\}$ for which the trek starting at $a_i$ only uses $E_{ij}$ on the left, and the trek ending at $b_j$ only uses $F_{ij}$ on the right. Therefore, the two quantities are equal.
\end{proof}

\subsection{Proof of Proposition~\ref{prop:restricted_trek_ancestral}}\label{app:pf_prop_restricted_trek_ancestral}

\begin{proof}
  We will show that the determinant of the matrix with entries
  \[
    (|\Sigma_{\pa(u)\cup\{u\}, \pa(u)\cup\{x\}})_{u\in\pa(i)\cup\{j\},
      x\in\pa(i)\cup\{i\}}
  \]
  is divisible by
  $\mathcal P_{\pa(i)\cup\{j\}, \pa(i)\cup\{i\}, (\pa(i)\cup\{j\},
    V)}$. Combinatorially, this means that there is a
  $(\pa(i)\cup\{j\}, V)$-restricted trek separation between the sets
  $\pa(i)\cup\{j\}$ and $\pa(i)\cup\{j\}$. Indeed, they are
  $(\pa(i)\cup\{j\}, V)$-restricted trek separated by
  $(\emptyset, \pa(i))$.

We begin by showing that the sets $(\pa(u), \pa(u)), (u, x)$ for $u\in\pa(i)\cup\{j\}$ and $x\in\pa(i)\cup\{i\}$ satisfy the swapping property.  Firstly, consider a system of treks with no sided intersection between $\pa(u)\cup\{u\}$ and $\pa(u)\cup\{x\}$. Suppose that in this system it is not the case that $\pa(u)$ is connected to $\pa(u)$ and $u$ is connected to $x$. Then, there must exist a trek between $u$ and an element from $\pa(u)$. Since $u$ is ancestral, the left side of this trek has to end in a directed edge. That means that the left side of this trek contains an element from $\pa(u)$, which is impossible since this creates a sided intersection on the left side of this system. Therefore, we have a contradiction, and any such trek system connects $\pa(u)$ to $\pa(u)$ and $u$ to $x$.

Now, suppose that we have two systems of treks with no sided
intersection between $\pa(u)\cup\{u\}$ and $\pa(u)\cup\{i\}$. Call
them $\mathcal T_1$ and $\mathcal T_2$.  In both systems, the treks
connecting $u$ and $x$ need to start with a bidirected edge at $u$ or
with a directed edge away from $u$ in order to avoid intersections on
the left. We need to show that we can exchange the part connecting
$\pa(u)$ to $ \pa(u)$ in $\mathcal T_1$ with the corresponding part of
$\mathcal T_2$, thereby obtaining two new trek systems with no sided
intersection. Suppose for contradiction that once we make such an
exchange, we get a sided intersection.  Then, one of the treks from
$u$ to $x$ gets a sided intersection with a trek from $\pa(u)$ to
$\pa(u)$.  Since the former trek has the form
$u (\leftrightarrow)\rightarrow\cdots\rightarrow x$, the created
intersection has to be on its right side.  Switch the tails of the two
intersecting treks.  We get a trek of the form
$u \leftrightarrow\rightarrow\cdots\rightarrow z\in\pa(u)$. But this
is a contradiction to $u$ being ancestral.
% Hence, such treks are not
% allowed.

We have shown that the sets $(\pa(u), \pa(u)), (u, x)$ for $u\in\pa(i)\cup\{j\}$ and $x\in\pa(i)\cup\{i\}$ satisfy the swapping property. We now show that 
\begin{align}\label{eq:ancestral_factorization}
|\Sigma_{\pa(u)\cup\{u\}, \pa(u)\cup\{x\}}| = \mathcal P_{\pa(u), \pa(u)}\mathcal P_{u, x, (u, V)}.
\end{align}

Note that
$|\Sigma_{\pa(u)\cup\{u\}, \pa(u)\cup\{x\}}| = \mathcal
P_{\pa(u)\cup\{u\}, \pa(u)\cup\{x\}}$. Since the sets
$(\pa(u), \pa(u)), (u, x)$ for $u\in\pa(i)\cup\{j\}$ and
$x\in\pa(i)\cup\{i\}$ satisfy the swapping property, every trek system
with no sided intersection between $\pa(u)\cup\{u\}$ and
$\pa(u)\cup\{x\}$ splits into a trek system connecting $\pa(u)$ and
$\pa(u)$ and a single trek connecting $u$ and $x$.  The latter trek
only has the vertex $u$ on its left side.  In other words, it starts
either with a bidirected edge at $u$ or with a directed edge pointing
away from $u$.

On the other hand we claim that every trek system connecting $\pa(u)$
to $\pa(u)$ with no sided intersection, and every trek from $u$ to $x$
that only has $u$ on the left can be combined into a trek system
connecting $\pa(u)\cup\{u\}$ and $\pa(u)\cup\{x\}$ with no sided
intersection.  Suppose for contradiction that the combination gives a
sided intersection.  So, there is a trek from $a\in\pa(u)$ to
$b\in\pa(u)$ that has sided intersection with the considered trek from
$u$ to $x$.  The intersection cannot be on the left since otherwise we
would have a loop $u \rightarrow\cdots\rightarrow a\rightarrow u$
which is not allowed. Thus, there is intersection on the
right. Swapping the right tails then gives a trek
$u (\leftrightarrow)\rightarrow\cdots \rightarrow b$. But since $u$ is
ancestral, we know that every trek between $u$ and its parents has to
end with a directed edge at $u$.  We have arrived at a contradiction
and, thus, the claimed combination into a trek system connecting
$\pa(u)\cup\{u\}$ and $\pa(u)\cup\{x\}$ with no sided intersection is
possible.  This proves \eqref{eq:ancestral_factorization}.

Finally, it remains to show that 
$$\det(\mathcal P_{u,x, (u, V)})_{u\in\pa(i)\cup \{j\},x\in \pa(i)\cup\{i\}} = \mathcal P_{\pa(i)\cup \{j\}, \pa(i)\cup \{i\}, (\pa(i)\cup \{j\}, V)}.$$
But this equality follows directly from Lemma~\ref{lem:E_ijF_ij}.
\end{proof}

%%% Local Variables:
%%% mode: latex
%%% TeX-master: "nested_dets"
%%% End:

\bibliographystyle{amsalpha}
\bibliography{nested_dets_references}

\end{document}